\tikzset{node distance=2cm, auto}
\theoremstyle{definition}
\newtheorem{example}{Example}[section]
\newtheorem{remark}[example]{Remark}
\newtheorem{assumption}[example]{Assumption}
\newtheorem{definition}[example]{Definition}
\theoremstyle{plain}
\newtheorem{proposition}[example]{Proposition}
\newtheorem{corollary}[example]{Corollary}
\newtheorem{theorem}[example]{Theorem}
\newtheorem{lemma}[example]{Lemma}
\newcommand{\NN}{\mathbb{N}}
\newcommand{\ZZ}{\mathbb{Z}}
\newcommand{\sA}{\mathcal{A}}
\newcommand{\sB}{\mathcal{B}}
\newcommand{\sM}{\mathcal{M}}
\newcommand{\sN}{\mathcal{N}}
\newcommand{\sO}{\mathcal{O}}
\newcommand{\sF}{{\mathcal{F}}}
\newcommand{\sK}{\mathcal{K}}
\newcommand{\sL}{\mathcal{L}}
\newcommand{\bA}{{\bf A}}
\newcommand{\bB}{{\bf B}}
\newcommand{\vr}{{\vec{r}}}
\newcommand{\vs}{{\vec{s}}}
\newcommand{\fY}{{\mathfrak{Y}}}
\newcommand{\isoarrow}{\stackrel{\sim}{\rightarrow}}
\newcommand{\Div}{{\mathfrak Div}}
\DeclareMathOperator{\Spec}{Spec}
\DeclareMathOperator{\coh}{\mathfrak Coh}
\DeclareMathOperator{\Hom}{Hom}
\DeclareMathOperator{\spec}{Spec}
\DeclareMathOperator{\func}{Func}
\DeclareMathOperator{\bker}{{\bf ker}}
\DeclareMathOperator{\face}{Face}
\DeclareMathOperator{\supp}{supp}
\DeclareMathOperator{\tr}{tr}
\let\div\relax
\DeclareMathOperator{\div}{div}
\DeclareMathOperator{\lcm}{lcm}
\newcommand{\Gm}{\mathbb{G}_{\mathrm{m}}}
\begin{document}
\title{$G$-theory of root stacks and equivariant $K$-theory}

\author[A. Dhillon]{Ajneet Dhillon}

\address{Department of Mathematics, Middlesex College, University of
Western Ontario, London, ON N6A 5B7, Canada}

\email{adhill3@uwo.ca}

\author[I. Kobyzev]{Ivan Kobyzev}

\address{Department of Pure Mathematics, University of
Waterloo, Waterloo, ON N2L 3G1, Canada}

\email{ikobyzev@uwaterloo.ca}

\begin{abstract}{Using the description of the category of quasi-coherent sheaves on a root
stack, we compute the $G$-theory of root stacks via localisation methods.
We apply our results to the study of equivariant $K$-theory of algebraic varieties 
under certain conditions.} \end{abstract}

\maketitle
\section{Introduction}
Let $X$ be an algebraic variety equipped with an action of a finite group $G$. One would like to compute the equivariant $K$-theory $K_G(X)$. A first answer was given in the paper \cite{duke} in the case when $X$ is a smooth curve. Let us briefly describe it. We set $Y $ to be the quotient $X/G$, and $\phi: X \rightarrow Y$ the quotient map, and $B$ the branch locus. Then $B$ is a finite union of $G$ orbits $B_1, \ \dots, \ B_n$. Choosing a point $P_i \in B_i$ for each $i$, denote the inertia group of $P_i$ by $H_i$. Note that it is a cyclic group. Using some basic properties of equivariant sheaves and the Borel construction it was proved that there is a decomposition of abelian groups:
$$K_G(X) = K(Y) \oplus \oplus_{i=1}^n R_k'(H_i) , $$ 
where  $R_k'(H)$ is the subgroup of a representation ring without invariants, i.e. $x \in R_k(H)$, such that $\langle x, 1_H \rangle  = 0$. From here we can guess a flavor of the result in general case: there should be some kind of a decomposition of $K_G(X)$ onto $K(Y)$ and the  terms coming from ramification.

If one wants to generalize this to higher dimensions, there are two routes one may take.
One may enter the realm of algebraic stacks.  For example, Vistoli and Vezzosi in the  papers \cite{vistoli} and \cite{vv} proved the decomposition formula for $K_G(X)$ of a scheme $X$ using (implicitly) a top down description of the stack $[X/G]$.  

Another route would be to enter the realm of logarithmic geometry,
see \cite{niziol} and \cite{hagihara}. These two papers study the $K$-theory of the 
Kummer \'{e}tale site on a logarithmic scheme. It should be noted that, using the correspondence between sheaves on an infinite root stack and sheaves on the Kummer \'{e}tale site
(see \cite[\S 6]{tv}), one can deduce the structure results of \cite[\S 4]{hagihara} and \cite[Theorem 1.1]{niziol} from our Theorem \ref{th:G-root} and Corollary \ref{c:kequalsg}.

We begin by discussing the general philosophy of our approach encompassing both of these routes.
In algebraic geometry one frequently needs to consider equivariant objects on a scheme $X$ with
respect to the action of $G$. These objects correspond to objects over the quotient stack
$[X/G]$. However, it can happen that $[X/G]\cong [X'/G']$ for seemingly unrelated
$X$ and $X'$. In such situation, it is useful to have a canonical description of the 
quotient stack $[X/G]$, perhaps in terms of its coarse moduli space $Y$. This may
not always be possible but sometimes it is. In this paper we will describe a situation
in which this occurs, see Theorem \ref{main theorem}. When our hypotheses are satisfied, the
quotient stack becomes a root stack over its coarse moduli space $Y$.

The root stack construction goes back to Olsson (see \cite{O}). If a quotient stack is ``a tool" to take quotients, similarly a root stack can be used to ``extract roots" from line bundles on a scheme. It turns out that this construction is quite useful, for example, in Gromov-Witten theory of a Deligne-Mumford stack (see \cite{AGV}, \cite{C},  \cite{O}).   The moduli stack of stable maps from a curve to a stack does not have nice properties, and instead one need to consider so called twisted stable maps from a twisted curve. As was shown in \cite{AGV}, one can replace a twisted curve by a root stack. 

Another application of  root stacks is the parabolic orbifold correspondence.
In a nutshell, this correspondence describes sheaves and vector bundles on a root stack 
in  terms of sheaves and vector bundles on the base with extra data.
 Parabolic bundles on a Riemann surface were defined in \cite{MS}, and were shown to be related to a unitary representation of a homotopy group. Borne proved the equivalence of parabolic bundles and locally free sheaves on a root stack (see \cite{borne}). Finally, Borne and Vistoli generalized it to the equivalence of  quasi-coherent sheaves on a root stack and parabolic sheaves (see \cite{BV}).

The results of \cite{BV} are the foundation of this work. Using their description of coherent sheaves on a root stack, we are able to compute the algebraic $G$-theory of a root stack. See Theorem \ref{th:G-root} for the statement of our first main result. The tool necessary for its proof is localisation sequences associated with a quotient category. The reader can think about this method as an algebraic analog of Segal's localization theorem for equivariant topological $K$-theory (see \cite[Prop 4.1]{segal}).

The second result of this work is Theorem \ref{main theorem}. It says that under certain assumptions a quotient stack is a root stack over its coarse moduli space. The main tool used in the proof is a generalization of Abhyankar's lemma, see \cite[XIII, appendix I]{SGA1}.

Combining  these results gives an immediate application to equivariant $K$-theory of  schemes. This is how we obtain a generalization of the aforementioned decomposition of \cite{duke}.   We formulated it as Theorem \ref{K-theorem}. If a finite group $G$ acts on a scheme $X$, then, under some assumptions, we have the decomposition of
$K_G(X)$ into direct sum of groups $ K(X/G)  $ and $G$-theory of ramification divisors and their intersections. Note that our assumptions will always be satisfied for tame actions of groups on smooth projective curves.

Let us give an outline of the paper for the convenience of the reader. 
In a short preliminary section \ref{s:local} we recall some necessary categorical techniques. 

We start by studying the $G$-theory of a root stack in section~\ref{s:root}. First, the description of the category of 
quasi-coherent sheaves on a root stack by \cite{BV} in section \ref{subsection-BV} is recalled. After that 
 we exploit localisation methods to decompose  the $G$-theory of parabolic sheaves. Finally, in  section \ref{ss:computation} we combine all intermediate results and formulate Theorem \ref{th:G-root}, giving the $G$-theory of a root 
stack over a noetherian scheme. We finish the section with a remark that under some assumption, the algebraic $G$-theory of a root stack coincides with its Waldhausen $K$-theory in the sense of \cite{Joshua}, Corollary \ref{c:kequalsg}.

In section \ref{s:quotient} we address the issue of when a quotient stack is a root stack. First we show that under our assumptions (tameness of the action and ramification divisor is normal crossing), the inertia group is generated in codimension one (see Theorem \ref{t:inertia}). We use Abhyankar's theorem (\cite[Th 2.3.2]{GrM}) in the proof. Then under the same hypothesis, we show that a quotient stack is a root stack, see Theorem \ref{main theorem}.

The paper ends in section \ref{s:equi}, where we study equivariant $K$-theory of a scheme by combining the results of the previous two sections.
  As an example we compute the equivariant $K$-theory of the affine line and the Burniat surface. \\

\textit{Acknowledgments.} We are grateful to N. Borne, J. Jardine and R. Joshua  for their interest in the project and useful remarks. We thank M. Satriano for the great discussions; after reading his paper \cite{GS}  we were able to remove an extra hypothesis in the first version of this work.  In addition we thank the referee for their helpful corrections.

\section*{Notations and conventions}
\begin{tabular}{cl}
$k$ & our base field \\
$\bker$ & the kernel of a functor (Def \ref{d:kernel}) \\
$\vr$  & an $n$-tuple $(r_1,\ldots, r_n)$ of real numbers \\
$\vr I^n$ & the poset of integer points in $\prod_{i=1}^n [0,r_i]$\\ 
$\func(\mathbf{A}, \mathbf{B})$ & functor category between two abelian categories \\
$\widehat{M}$ & the dual $\Hom(M,\Gm)$ of the monoid $M$ \\ 
$\Div X$ & the symmetric monoidal category of line bundles with section, see \S 3.1 \\
$X_{L,\vr}$ & a stack of roots over the scheme $X$, see definition 3.4 \\
$\coh X$ & category of coherent sheaves on $X$\\
$\mathcal{EP}(X,L,\vr)$ & category of coherent extendable pares (Rem \ref{CohExt})  \\
$S_n(k)=S(k)$ & The set of subsets of $\{1,2,\ldots, n\}$ of cardinality $k$\\
              & We often drop the subscript $n$ when it is clear from the context.\\
$\face^k$     & The $k$th face functor, see definition \ref{d:face} \\
$\bker^k$     & The kernel of the face functor, see definition \ref{d:ker}\\

\end{tabular}


\section{Localization via Serre subcategories}\label{s:local}

\subsection{Serre subcategories}

Let $\bA$ be an abelian category. Recall that a \emph{Serre subcategory} $S$ of $\bA$ is a non-empty full subcategory that is closed
under extensions, subobjects and quotients. When $\bA$ is well-powered the quotient category $\bA/S$ exists, see \cite[pg. 44, Theorem 2.1]{swan}.

We will need the following result to identify quotient categories.

\begin{theorem}
Let $F:\bA\rightarrow \bB$ be an exact functor between abelian categories. Denote
by $S$ the full subcategory whose objects are $x$ with $F(x)\cong 0$. 
Then $S$ is a Serre subcategory and we have a factorisation 
\begin{center}
\begin{tikzpicture}
\node (TL) at (0,0) {$\bA$};
\node (BL) [below of =TL] {$\bB$};
\node (TR) [right of=TL] {$\bA/S$};
\draw [->] (TL) edge node[auto] {$F$} (BL) ;
\draw [->] (TL) -- (TR);
\draw [->] (TR) -- (BL);
\end{tikzpicture}
\end{center}
\end{theorem}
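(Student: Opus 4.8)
The plan is to treat the two assertions of the theorem separately: first that $S$ is a Serre subcategory, and then that $F$ factors through the quotient $\bA/S$.

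For the first assertion I would argue entirely from the exactness of $F$. The subcategory $S$ is non-empty since $F(0)\cong 0$. Given any short exact sequence $0\to x'\to x\to x''\to 0$ in $\bA$, applying the exact functor $F$ yields a short exact sequence $0\to F(x')\to F(x)\to F(x'')\to 0$ in $\bB$. If $x\in S$, so that $F(x)\cong 0$, then $F(x')$ and $F(x'')$ are respectively a subobject and a quotient of a zero object, hence both vanish; this gives closure under subobjects and quotients. Conversely, if $x',x''\in S$, then $F(x)$ sits in a short exact sequence with vanishing outer terms, so $F(x)\cong 0$ and $x\in S$; this is closure under extensions. Hence $S$ meets all the defining conditions of a Serre subcategory.

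For the second assertion I would invoke the universal property of the Gabriel quotient functor $Q\colon \bA\to \bA/S$ that accompanies the construction cited in \cite{swan}: every exact functor out of $\bA$ annihilating each object of $S$ factors uniquely through $Q$ by an exact functor. By the very definition of $S$ the functor $F$ sends each of its objects to a zero object, so this hypothesis is automatically satisfied, and we obtain the desired exact $\bar F\colon \bA/S\to \bB$ with $F=\bar F\circ Q$, producing the commuting triangle.

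If one prefers an explicit construction in place of citing the universal property, I would set $\bar F$ equal to $F$ on objects and extend it over morphisms using the calculus of fractions describing $\Hom_{\bA/S}$: a morphism is represented by a datum $x\hookleftarrow x'\to y/y'$ with $x/x',\,y'\in S$, and since $F(x/x')\cong 0\cong F(y')$, exactness furnishes isomorphisms $F(x')\isoarrow F(x)$ and $F(y)\isoarrow F(y/y')$ that convert the representative into an honest morphism $F(x)\to F(y)$. The step I expect to require the most care is checking that this assignment is independent of the chosen representative and compatible with composition, so that $\bar F$ is a well-defined functor; independence follows because any two representatives are dominated by a common refinement on which the induced maps agree after applying $F$. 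Exactness of $\bar F$ is then immediate from the exactness of both $F$ and $Q$.
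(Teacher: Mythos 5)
Your proof is correct: the paper itself offers no argument here beyond the citation \cite[page 114]{swan}, and your two-part argument --- closure of $S$ under subobjects, quotients and extensions via exactness of $F$, followed by the factorisation through the Gabriel quotient via its universal property (or equivalently the explicit calculus-of-fractions construction of $\bar F$, where the only delicate point, independence of representative, is handled by passing to a common refinement) --- is precisely the standard proof contained in the cited source. Nothing further is needed.
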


\begin{proof}
See \cite[page 114]{swan}
\end{proof}

\begin{definition}\label{d:kernel}
The category $S$ is called the \textit{kernel of the functor $F$} and is denoted by $\bker(F).$
\end{definition}

\begin{theorem}\label{t:swanQuot}
In the situation of the previous theorem suppose that we have
\begin{enumerate}
\item for every object  $y\in \bB$ there is a $x\in \bA$ such that $F(x)$ is 
isomorphic to $y$ and
\item for every morphism $f:F(x) \rightarrow F(x')$ there is $x''\in \bA$ 
with $h:x''\rightarrow x$ and
$g:x''\rightarrow x'$ such that $F(h)$ is an isomorphism and the following diagram
commutes 
\begin{center}
\begin{tikzpicture}
\node (TL) at (0,0) {$F(x'')$};
\node (BL) [below of =TL] {$F(x)$};
\node (BR) [right of=BL] {$F(x')$.};
\draw [->] (TL) edge node[auto,left] {$F(h)$} (BL) ;
\draw [->] (TL) edge node[auto] {$F(g)$} (BR);
\draw [->] (BL) edge node[auto] {$f$} (BR);
\end{tikzpicture}
\end{center}
Then there is an equivalence of categories $\bA/S\cong \bB$.
\end{enumerate} 
\end{theorem}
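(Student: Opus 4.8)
The plan is to show that the induced functor $\bar F\colon \bA/S \to \bB$ produced by the factorisation in the previous theorem (where $S=\bker(F)$) is an equivalence, by checking that it is essentially surjective, full, and faithful. I will write $Q\colon \bA\to\bA/S$ for the quotient functor, which is the identity on objects and satisfies $\bar F\circ Q=F$. Essential surjectivity will then be immediate: given $y\in\bB$, hypothesis (1) supplies $x\in\bA$ with $\bar F(Q(x))=F(x)\cong y$.

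The main tool will be the description of morphisms in $\bA/S$ recalled from \cite{swan}: by the calculus of fractions, every morphism $Q(x)\to Q(x')$ has the form $Q(g)\circ Q(h)^{-1}$, where $h\colon x''\to x$ and $g\colon x''\to x'$ are morphisms of $\bA$ and $h$ has both kernel and cokernel in $S$ (equivalently, $Q(h)$ is invertible in $\bA/S$). For such an $h$, exactness of $F$ gives $F(\ker h)=F(\coker h)=0$, so $F(h)$ is an isomorphism; hence, by functoriality of $\bar F$ and the identity $\bar F\circ Q=F$, one has $\bar F\bigl(Q(g)\circ Q(h)^{-1}\bigr)=F(g)\circ F(h)^{-1}$. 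Fullness is now exactly hypothesis (2): given $f\colon F(x)\to F(x')$, choose $x''$, $h$, $g$ as in (2); then $F(h)$ is invertible, $Q(h)$ is invertible, and $\bar F\bigl(Q(g)\circ Q(h)^{-1}\bigr)=F(g)\circ F(h)^{-1}=f$, so $f$ lies in the image of $\bar F$ on morphisms.

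For faithfulness I would take a morphism $\psi\colon Q(x)\to Q(x')$ with $\bar F(\psi)=0$ and write $\psi=Q(g)\circ Q(h)^{-1}$ as above. Then $F(g)\circ F(h)^{-1}=0$ with $F(h)$ invertible forces $F(g)=0$. Since $F$ is exact, $F(\im g)=\im F(g)=0$, so $\im g\in\bker(F)=S$; thus $g$ factors through an object of $S$, giving $Q(g)=0$ and hence $\psi=0$. The delicate point, which I expect to be the main obstacle, is the interface with the specific model of $\bA/S$ used in \cite{swan}: one must confirm that the representation $\psi=Q(g)\circ Q(h)^{-1}$ is available for every morphism and that the computation $\bar F\bigl(Q(g)\circ Q(h)^{-1}\bigr)=F(g)\circ F(h)^{-1}$ is independent of the chosen roof, so that the argument respects the equivalence relation on fractions. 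Granting this book-keeping, $\bar F$ is fully faithful and essentially surjective, hence an equivalence $\bA/S\cong\bB$.
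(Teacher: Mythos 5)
Your proof is correct; note first that the paper offers no argument of its own here, simply citing \cite[pg.~114, Theorem~5.11]{swan}, and your reconstruction via the calculus of fractions is in substance the standard proof of that cited theorem. One remark: the point you flag as the main delicate issue is in fact automatic. Since the induced functor $\bar F$ with $\bar F\circ Q=F$ is already supplied by the preceding factorisation theorem, there is nothing to check about independence of the chosen roof: for any morphism $\psi$ of $\bA/S$ that happens to equal $Q(g)\circ Q(h)^{-1}$, functoriality of $\bar F$ alone forces $\bar F(\psi)=F(g)\circ F(h)^{-1}$, whichever roof you picked. The only structural input you genuinely need from the model of $\bA/S$ is that every morphism admits \emph{some} representation $Q(g)\circ Q(h)^{-1}$ with $\ker h,\coker h\in S$ (equivalently, that $Q$ is the localisation at the class of such $h$, which admits a calculus of fractions); this is part of the construction of the Serre quotient in \cite{swan} and in Gabriel's thesis. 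Your remaining steps are sound: exactness of $F$ gives $\ker F(h)=F(\ker h)$ and $\coker F(h)=F(\coker h)$, so $F(h)$ is invertible exactly when $Q(h)$ is, which yields fullness from hypothesis (2); and in the faithfulness step, $F(g)=0$ gives $F(\im g)=\im F(g)=0$, so $\im g\in\bker(F)=S$, the factorisation of $g$ through $\im g$ gives $Q(g)=0$, and additivity of $\bar F$ upgrades ``$\bar F(\psi)=0\Rightarrow\psi=0$'' to faithfulness. With those observations your argument is complete and supplies exactly the proof the paper delegates to Swan.
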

\begin{proof}
See \cite[pg. 114, theorem 5.11]{swan}.
\end{proof}

\subsection{Some functor categories}

Consider  $n$-tuples of  integers $\vr=(r_1,r_2,\ldots, r_n)$
and $\vs=(s_1,s_2,\ldots, s_n)$. We denote by $[\vr,\vs]$ the poset
of $n$-tuples $(x_1,\ldots, x_n)$ with 
\[
x_i\in\ZZ\quad\text{and}\quad r_i\le x_i\le s_i.
\]
We will make use of the following shorthand notation :
\[
rI=[0,r] \quad\text{and}\quad \vr I^n=[0,\vr].
\]
These intervals are naturally posets with
\[
(x_1,x_2,\ldots, x_n)\le (y_1,y_2,\ldots, y_n) \quad\text{if and only if}
\quad x_i\le y_i\ \text{for all }i. 
\]
This poset structure allows us to view them as categories in the usual way.

Fix an abelian category $\bA$ and consider the functor category
 $$\func(\vr I^n,\bA).$$  This category is   
abelian  with kernels and cokernels formed pointwise.
We will be interested in the
$K$-theory of such categories. In this subsection we will try to understand some of their
quotient categories.
Given an object $\sF$ in this category and an object
$u$ of $\vr I^n$ we denote by $\sF_{u}\in\bA$ the value of the functor
$\sF$ on this object and if  $u\le v$ the arrow from $F_u$ to $F_v$ will be denoted 
by
\[
F_{+(v-u)}:F_u\rightarrow F_v. 
\]
In particular, we take $e_i=(0,0,\ldots,1,0,\ldots, 0)$ to be a standard basis vector
so that we have a morphism
\[
F_{+e_i}:F_{(u_1,\ldots, u_n)}\rightarrow F_{u_1,\ldots, u_{i-1},u_i+1,u_{i+1}\ldots, u_n}.
\]
\begin{lemma}\label{l:data}
To give an object $F$ of $\func(\vr I^n,\bA)$ is the same as providing the following
data :
\begin{enumerate}[(D1)]
\item objects $F_{(u_1,u_2,\ldots, u_n)}\in \bA$
\item arrows 
\[ F_{+e_i}:F_u \rightarrow F_{u+e_i},\]

\end{enumerate}
such that all diagrams of the form
\begin{center}
\begin{tikzpicture}
\node (TL) at (0,0) 
{ $F_{u}$};
\node (TR) at (5,0)
{ $F_{u+e_j}$};
\node (BL) at (0,-2)
{ $F_{u+e_i}$};
\node (BR) at (5,-2)
{ $F_{u+e_i+e_j}$};
\draw[->] (TL) -- (TR);
\draw[->] (TL) --(BL);
\draw[->] (BL) -- (BR);
\draw[->] (TR) -- (BR);
\end{tikzpicture}
\end{center}
\end{lemma}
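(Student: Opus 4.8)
The plan is to read the lemma as the statement that restriction along the covering relations of the poset sets up a bijection between functors $F\colon\vr I^n\to\bA$ and collections of data (D1), (D2) for which every elementary square commutes. One direction is essentially free: any functor $F$ restricts to its values $F_u$ on objects and to the arrows $F_{+e_i}$ attached to the covering relations $u\le u+e_i$, and functoriality forces each square to commute, since both composites $F_u\to F_{u+e_i+e_j}$ coincide with the single arrow $F_{+(e_i+e_j)}$ that $F$ assigns to the relation $u\le u+e_i+e_j$. So the work is entirely in the converse.

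For the converse I would start from data satisfying (D1), (D2) and the square relations, and reconstruct a functor. Given $u\le v$ with $v-u=(d_1,\dots,d_n)$, $d_i\ge 0$, choose a monotone lattice path from $u$ to $v$, that is, a chain of covering steps $u=w_0,w_1,\dots,w_N=v$ with $w_{k+1}=w_k+e_{i_k}$ and $N=\sum_i d_i$, and declare $F_{+(v-u)}$ to be the composite $F_{+e_{i_{N-1}}}\circ\cdots\circ F_{+e_{i_0}}$; the empty path (when $u=v$) yields $\mathrm{id}_{F_u}$. Note that since $u\le w_k\le v$ componentwise, every intermediate point stays inside $\vr I^n$, so no boundary issues arise and every monotone path between $u$ and $v$ is admissible.

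The main obstacle is to show this composite does not depend on the path chosen; once that is in hand, functoriality is immediate. A monotone path from $u$ to $v$ is recorded by a word $e_{i_0}e_{i_1}\cdots e_{i_{N-1}}$ in the letters $e_1,\dots,e_n$ in which each $e_i$ occurs exactly $d_i$ times, and two words correspond to paths with the same endpoints precisely when they are rearrangements of one another. Because any two permutations of a fixed multiset are connected by finitely many transpositions of adjacent letters, it suffices to check that swapping two consecutive steps leaves the composite unchanged. If the two swapped letters are equal there is nothing to prove; if they are distinct, say $e_i$ and $e_j$, the swapped steps emanate from a common corner $w_k$, and the swap replaces the local composite $F_{+e_j}\circ F_{+e_i}$ at $w_k$ by $F_{+e_i}\circ F_{+e_j}$, which is exactly the commutativity of the elementary square attached to $w_k$ in directions $i$ and $j$. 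Hence $F_{+(v-u)}$ is well defined.

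Finally I would record the functor axioms: the empty path gives the identity, and for $u\le v\le w$ the concatenation of a chosen path from $u$ to $v$ with one from $v$ to $w$ is a monotone path from $u$ to $w$, so path-independence yields $F_{+(w-u)}=F_{+(w-v)}\circ F_{+(v-u)}$. Thus the data determine a genuine functor, and since the two passages (a functor to its restriction, and data to the reconstructed functor) are visibly mutually inverse, the identification of the lemma follows.
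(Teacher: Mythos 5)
Your proposal is correct and follows essentially the same route as the paper, whose one-line proof simply asserts that the hypotheses yield a well-defined map $F_u\rightarrow F_v$ for $u\le v$; your lattice-path construction with path-independence via adjacent transpositions is precisely the standard argument that assertion compresses. You have merely made explicit the details the paper leaves to the reader.
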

\begin{proof}
The hypothesis insure that if $u\le v$ in $\vr I^n$ then
there is a well defined map $F_u\rightarrow F_v$ which produces
our functor.
\end{proof}

\begin{proposition}
\begin{enumerate}[(i)]
\item 
Let $\tr_{n-1}(\vec{r}) = (r_1,r_2,\ldots, r_{n-1})$.
There is an exact functor
\[
\pi : \func(\vr I^n ,\bA) \rightarrow \func(\tr_{n-1}(\vr)I^{n-1},\bA)
\]
defined on objects by 
$$
\pi(G)_{(u_1,u_2,\ldots, u_{n-1})}= (G)_{(u_1,\ldots, u_{n-1},0)}$$

\item The functor $\pi$ has  a left adjoint denoted $\pi^*$. 
We have $\pi \circ \pi^* \simeq 1$.

\item The functor $\pi^*$ is fully faithful.

\end{enumerate}
\end{proposition}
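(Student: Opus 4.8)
The plan is to realise $\pi$ as restriction along the monotone map of posets
\[
\iota \colon \tr_{n-1}(\vr)I^{n-1} \longrightarrow \vr I^n, \qquad \iota(u_1,\ldots,u_{n-1}) = (u_1,\ldots,u_{n-1},0),
\]
viewed as a functor of the associated categories. Then $\pi$ is precomposition $(-)\circ\iota$, and part (i) is immediate: kernels and cokernels in both functor categories are computed pointwise, and $\pi$ merely evaluates a functor on the objects in the image of $\iota$, so it commutes with the formation of pointwise kernels and cokernels and is therefore exact.

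For part (ii) I would take $\pi^*$ to be the left Kan extension $\mathrm{Lan}_\iota$ along $\iota$, the canonical candidate for a left adjoint to a restriction functor. The key point is that this Kan extension is computable with no hypothesis on colimits in $\bA$: for a fixed $c=(u_1,\ldots,u_n)$, the comma category $(\iota\downarrow c)$ is the poset of those $d\in\tr_{n-1}(\vr)I^{n-1}$ with $d_j\le u_j$ for $j\le n-1$ (the constraint on the last coordinate being automatic), and this poset has the maximal element $(u_1,\ldots,u_{n-1})$ as a terminal object. Since a colimit over a category with a terminal object is just the value there, one obtains the explicit formula
\[
\pi^*(F)_{(u_1,\ldots,u_n)} = F_{(u_1,\ldots,u_{n-1})},
\]
so that $\pi^*$ extends a functor on the $(n-1)$-dimensional grid to the $n$-dimensional grid by keeping it constant in the last coordinate, with $\pi^*(F)_{+e_i}=F_{+e_i}$ for $i<n$ and $\pi^*(F)_{+e_n}=\mathrm{id}$. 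The universal property of the Kan extension furnishes the adjunction $\pi^*\dashv\pi$, and $\pi\circ\pi^*\simeq 1$ follows by direct evaluation, $\pi(\pi^*F)_{(u_1,\ldots,u_{n-1})}=\pi^*(F)_{(u_1,\ldots,u_{n-1},0)}=F_{(u_1,\ldots,u_{n-1})}$; in fact the unit of the adjunction is the identity.

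Part (iii) then follows from the standard criterion that a left adjoint is fully faithful exactly when the unit of the adjunction is a natural isomorphism; here the unit $1\to\pi\circ\pi^*$ is the identity by the computation above, hence an isomorphism, so $\pi^*$ is fully faithful. Equivalently one reads off $\Hom(\pi^*F,\pi^*F')\cong\Hom(F,\pi\pi^*F')=\Hom(F,F')$. The only genuinely technical work is checking that the explicit assignment above really defines a functor and that the counit $\pi^*\pi G\to G$, given at $(u_1,\ldots,u_n)$ by the structure map $G_{+u_ne_n}\colon G_{(u_1,\ldots,u_{n-1},0)}\to G_{(u_1,\ldots,u_n)}$, is natural. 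Both reduce to the commutativity of the squares of Lemma \ref{l:data}: the squares not involving the direction $e_n$ commute because they come from $F$, while those involving $e_n$ commute trivially since one opposing pair of their edges are identities. This diagram-chasing is the expected, but routine, obstacle; anyone wishing to avoid Kan extensions can simply write down $\pi^*$, the (identity) unit, and this counit by hand and verify the triangle identities directly.
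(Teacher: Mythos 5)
Your proof is correct, and it arrives at exactly the functor $\pi^*$ the paper uses --- the extension that is constant in the last coordinate, with $\pi^*(F)_{+e_n}=\mathrm{id}$ --- but you reach it by a more conceptual route. The paper simply writes down this formula, verifies via Lemma \ref{l:data} that it defines a functor, and then proves the adjunction by hand: it constructs the map $\Hom(\pi^*(F),G)\to\Hom(F,\pi(G))$ and exhibits its inverse explicitly, sending $\beta\colon F\to\pi(G)$ to the composite of $\beta$ with the structure map $G_{(u_1,\ldots,u_{n-1},0)}\to G_{(u_1,\ldots,u_n)}$ (your counit $G_{+u_ne_n}$ is precisely this dashed arrow with $\beta=\mathrm{id}$). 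You instead identify $\pi^*$ as the left Kan extension $\mathrm{Lan}_\iota$, and your key observation --- that each comma category $(\iota\downarrow c)$ is a poset with terminal object $(u_1,\ldots,u_{n-1})$, so the pointwise Kan extension exists with no cocompleteness hypothesis on $\bA$ and collapses to evaluation at that object --- buys you the formula and the adjunction simultaneously from the universal property, with the unit visibly the identity. This also streamlines (iii): the paper computes $\Hom(\pi^*F,\pi^*F')=\Hom(F,\pi\pi^*F')=\Hom(F,F')$, whereas you invoke the standard criterion that a left adjoint is fully faithful iff the unit is an isomorphism (and note the Hom computation as an alternative). The trade-off is the expected one: the paper's argument is self-contained and elementary, while yours explains \emph{why} the formula is forced and localizes all remaining work in the same Lemma \ref{l:data} diagram checks; the residual verifications (functoriality of $\pi^*F$, naturality of the counit) are identical in both treatments. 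One small slip of phrasing: in your last paragraph the squares for naturality of the counit ``come from $G$,'' not $F$ --- the squares coming from $F$ are those verifying that $\pi^*F$ is a functor --- but this does not affect the argument.
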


\begin{proof}
\begin{enumerate} [(i)]
\item There is an inclusion functor $\tr_{n-1}(\vr)I^{n-1}\hookrightarrow \vr I^n$ defined by
\[
(x_1,x_2,\ldots, x_{n-1})\mapsto (x_1,x_2,\ldots, x_{n-1},0).\]
The functor $\pi$ is just the restriction along this inclusion. The 
exactness follows from the fact that  in functor categories, limits and
colimits are computed pointwise.

\item 
Given $F\in \func(\tr_{n-1}(\vr)I^{n-1},\bA)$ we need to construct 
 an object $\pi^*(F)\in \func( \vr I^{n},\bA)$. We set
\[
\pi^*(F)_{(u_1,u_2,\ldots, u_n)} = F_{(u_1,u_2,\ldots, u_{n-1})}.
\]
To produce a functor, we need maps
\[
\lambda^i_{(u_1,\ldots, u_n)}:\pi^*(\sF)_{(u_1,\ldots, u_i,\ldots, u_n)}\rightarrow 
\pi^*(F)_{(u_1,\ldots, u_i+1,\ldots, u_n)}  
\]
We define
\[
 \lambda^i_{(u_1,\ldots, u_n)}=\begin{cases}
F_{(u_1,\ldots, u_i,\ldots, u_{n-1})}\rightarrow F_{(u_1,\ldots, u_i+1,\ldots, u_{n-1})} & \text{ if }i<n \\
\text{identity} &\text{ if } i=n.
\end{cases}
\]
One checks that the hypothesis of Lemma \ref{l:data} are satisfied. Observe that
$\pi \circ \pi^* =1$. This produces a natural map
\[
\Hom(\pi^*(F),G)\rightarrow \Hom(F, \pi(G)).
\]

To see that this is a bijection, suppose that we are given a morphism
$\beta : F\rightarrow \pi (G)$. There is a diagram, where
the dashed arrow is defined to be the composition,
\begin{center}
\begin{tikzpicture}
\node (TL) at (0,0) {$ \pi^*(F)_{(u_1,\ldots, u_n)} $};
\node (TR) at (4,0) {$ G_{(u_1,\ldots, u_n)} $};
\node (BL) at (0,-2) {$ F_{(u_1,\ldots, u_{n-1})} $};
\node (BR) at (4,-2) {$ G_{(u_1,\ldots, u_{n-1},0)} $};
\draw [dashed,->] (TL) -- (TR);
\draw [->] (BL) -- node [auto, below] {$\beta$} (BR);
\draw [double equal sign distance] (TL) -- (BL);
\draw [->] (BR) --  (TR);
\end{tikzpicture}
\end{center}
This produces a natural morphism
\[
\Hom(\pi^*(F),G)\leftarrow \Hom(F, \pi(G))
\]
and we check that it is inverse to the previous map.

\item We have
\[
\Hom(\pi^*(F), \pi^*(F')) = \Hom(F, \pi\pi^*(F')) = \Hom(F,F').
\] 
\end{enumerate}
\end{proof}

\begin{theorem}
\label{standard}
\begin{enumerate}
\item 
The functor 
\[
\pi : \func(\vr I^n,\bA) \rightarrow \func(\tr_{n-1}(\vr) I^{n-1},\bA)
\]
satisfies the hypothesis of Theorem \ref{t:swanQuot}. 
\item 
Let $\vs = (r_1,r_2,\ldots, r_{n-1}, r_n-1)$.
 If $r_n>0$ then
the kernel of this functor is equivalent to $\func(\vs I^n,\bA)$.
\item
If $r_n=0$ then there is an equivalence of categories
$$\func(\vr I^n,\bA)\cong \func(\tr_{n-1}(\vr)I^{n-1},\bA).$$ 
\end{enumerate}
\end{theorem}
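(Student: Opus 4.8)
The plan is to handle the three parts in order, with the real content in (1) and (2); part (3) is a degenerate case that falls out immediately.

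\textbf{Part (1).} By the preceding Proposition the functor $\pi$ is exact and carries a fully faithful left adjoint $\pi^*$ with $\pi\circ\pi^*\simeq 1$, and I would deduce both hypotheses of (\ref{t:swanQuot}) from this adjunction data alone. Hypothesis (1), essential surjectivity, is immediate: any object $y$ of the target is isomorphic to $\pi(\pi^*y)$. For hypothesis (2), given $f:\pi(G)\rightarrow\pi(G')$ I would take $G''=\pi^*\pi G$, let $h=\epsilon_G:\pi^*\pi G\rightarrow G$ be the counit, and let $g:G''\rightarrow G'$ be the morphism adjoint to $f$. Full faithfulness of $\pi^*$ means the unit $\eta$ is a natural isomorphism, so the triangle identity $\pi(\epsilon_G)\circ\eta_{\pi G}=1$ shows $\pi(h)=\pi(\epsilon_G)=\eta_{\pi G}^{-1}$ is invertible; and the defining relation $f=\pi(g)\circ\eta_{\pi G}$ of the adjunct rearranges to $\pi(g)=f\circ\pi(h)$, which is exactly the commutativity required. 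This identifies $\func(\tr_{n-1}(\vr)I^{n-1},\bA)$ with the quotient $\func(\vr I^n,\bA)/\bker(\pi)$.

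\textbf{Part (2).} Here I would describe $\bker(\pi)$ explicitly. Since $\pi$ is restriction to the slice $u_n=0$, an object $G$ lies in $\bker(\pi)$ precisely when $G_{(u_1,\ldots,u_{n-1},0)}\cong 0$ for all $(u_1,\ldots,u_{n-1})$. The subposet $\{u\in\vr I^n:u_n\geq 1\}$ is isomorphic to $\vs I^n$ via the shift $(u_1,\ldots,u_{n-1},u_n)\mapsto(u_1,\ldots,u_{n-1},u_n-1)$, and this is where the hypothesis $r_n>0$ enters, ensuring the last coordinate ranges over $[0,r_n-1]$. I would then exhibit two mutually quasi-inverse functors: restriction of objects of $\bker(\pi)$ to this shifted subposet, and extension-by-zero sending $H\in\func(\vs I^n,\bA)$ to the object agreeing with $H$ for $u_n\geq 1$ and equal to $0$ for $u_n=0$, the arrow crossing the boundary being forced to be the zero map. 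Using (\ref{l:data}) I would check that extension-by-zero yields a genuine object (the only new commuting squares have a zero corner and commute automatically) and that the two functors are inverse up to natural isomorphism.

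\textbf{Part (3).} When $r_n=0$ the slice $u_n=0$ is all of $\vr I^n$, so the projection $(u_1,\ldots,u_{n-1},0)\mapsto(u_1,\ldots,u_{n-1})$ is an isomorphism of posets and $\pi$ is itself the induced equivalence of functor categories.

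The main obstacle is the bookkeeping in part (2): verifying that extension-by-zero respects the incidence data of (\ref{l:data}) and that no morphism information is lost across the $u_n=0$ boundary. One minor point of care is that objects of $\bker(\pi)$ only have $G_{(u,0)}$ \emph{isomorphic} to a zero object rather than literally equal to one, so the composite restriction-then-extension is naturally isomorphic, not equal, to the identity. Everything else reduces to the formal properties of the adjunction already established.
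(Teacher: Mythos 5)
Your proposal is correct and follows essentially the same route as the paper: part (1) via the adjunction $\pi^*\dashv\pi$ with $\pi\pi^*\simeq 1$ (the paper likewise takes the counit $\pi^*\pi(F)\to F$ together with the adjunct of $f$ and applies $\pi$), part (2) by identifying $\bker(\pi)$ with diagrams supported on the shifted subposet $\{u_n\geq 1\}\cong \vs I^n$ via Lemma (\ref{l:data}), and part (3) from the poset isomorphism when $r_n=0$. Your write-up is merely more explicit than the paper's at two points the paper leaves implicit — the triangle-identity computation showing $\pi(h)$ is invertible, and the extension-by-zero functor with its ``isomorphic to zero, not equal to zero'' caveat in the kernel description — both of which are accurate elaborations rather than deviations.
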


\begin{proof}
(1) 
 The functor $\pi$ is exact so it remains to check the two conditions
 of the theorem. The first condition follows from the fact that
 $\pi \circ \pi^* $ is the identity. Now suppose that we have a
 morphism $\pi (F) \rightarrow \pi(F')$. By adjointness we obtain a diagram
 \begin{center}
 \begin{tikzpicture}
 \node (T) at (0,0) {$\pi^* \pi (F)$} ;
 \node (B) at (0,-1.5) {$F$} ;
 \node (BR) at (2,-1.5) {$F '$} ;
 \draw[->] (T) -- (B);
 \draw[->] (T) -- (BR);
 \end{tikzpicture}
 \end{center}
 Applying $\pi$ to this picture shows that the second condition holds.

(2)
   The functor $\pi$ was defined on objects by the rule $\pi(G)_{(u_1,u_2,\ldots, u_{n-1})}= (G)_{(u_1,\ldots, u_{n-1},0)}.$ So it is clear that if  $\pi G \cong 0$ then $(G)_{(u_1,\ldots, u_{n-1},0)} \cong 0$ and to give an object $G$ of ${\bker \pi}$ is the same (up to isomorphism) as  giving the objects  $(G)_{(u_1,\ldots, u_n)} \in \bA  $ for all $u \in \vr I^n, u_n \neq 0.$ And according to Lemma \ref{l:data} it is the same as providing an object of the category 
   $\func(\vs I^n,\bA)$
   
 (3) If $r_n=0$ then we have an equivalence of categories 
 $\tr_{n-1}(\vr)\cong \vr$.

     \end{proof}


\section{Coherent sheaves on root stacks} \label{s:root}
\subsection{Preliminary results}
\label{subsection-BV}
Recall that if $M$ is a commutative monoid then $\widehat{M}=\Hom(M,{\mathbb G}_m)$ is its dual.

In this subsection  we will recall the main constructions and theorems from
\cite{BV}. We refer the reader to this paper for further details. Let's start by defining a root stack.
  
Let $X$ be a scheme. Denote by $\mathfrak{Div}X$ the groupoid of line bundles over $X$  with sections. It has the structure of a symmetric monoidal category with  tensor product given by 
$$(L, s) \otimes (L', s') = (L\otimes L', s \otimes s') .$$

Choosing  $n$ objects $(L_1, s_1), ... (L_n, s_n)$ of  $\mathfrak{Div}X$ allows us
to define a symmetric monoidal functor (see \cite[Definition 2.1]{BV})
\begin{align*}
L : \mathbb{N}^n &\longrightarrow \mathfrak{Div}X \\
(k_1, ..., k_n) &\longmapsto (L_1,s_1)^{\otimes k_1}\otimes ... \otimes (L_n,s_n)^{\otimes k_n} .
\end{align*}

Such functors arise from morphisms $X\rightarrow [\spec \ZZ[\NN^n]/\widehat{\NN^n}]$.
Let us recall how.

\begin{proposition} \label{p:equiv}
\begin{enumerate}[(i)]
\item Let $\bA$ be the groupoid whose objects are quasi-coherent $\sO_X$-algebras $\sA$ with 
a $\ZZ^n=\widehat{\widehat{\NN^n}}$-grading $\sA=\oplus_{u\in \ZZ^n} \sA_u$ such that each summand
$\sA_u$ is an invertible sheaf. The morphisms are graded algebra isomorphisms.
Then there is an equivalence of categories between $\bA^{\rm op}$ and the groupoid of 
$\widehat{\NN^n}$-torsors $P\rightarrow X$.

\item Let $\bB$ be the groupoid whose objects are pairs $(\sA, \alpha)$
where $\sA$ is a sheaf of algebra satisfying the conditions in (i) and 
\[\alpha : \sO_X[\NN^n]\rightarrow \sA \] 
is a morphism respecting the grading. The morphisms in the category $\bB$ are 
graded algebra morphisms commuting with the structure maps. Then there is an equivalence
of categories between $\bB^{\rm op}$ and the groupoid of morphisms
 $X\rightarrow [\spec \ZZ[\NN^n]/\widehat{\NN^n}]$
\end{enumerate}
\end{proposition}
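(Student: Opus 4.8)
The plan is to prove both parts through the relative spectrum functor $\spec_X(-)$, which sets up an anti-equivalence between quasi-coherent $\sO_X$-algebras and affine $X$-schemes; the appearance of $\bA^{\mathrm{op}}$ and $\bB^{\mathrm{op}}$ is precisely the contravariance of this functor on morphisms, since a graded algebra isomorphism $\sA \to \sB$ induces an isomorphism of torsors $\spec_X \sB \to \spec_X \sA$ in the opposite direction. The first thing I would record is that $\widehat{\NN^n} = \Hom(\NN^n, \Gm) \cong \Gm^n$ is a split torus whose character group is $\widehat{\widehat{\NN^n}} = \ZZ^n$, so that the $\ZZ^n$-grading appearing in $\bA$ is exactly a grading by the characters of the acting group.

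For (i), I would use the standard dictionary between torus actions and gradings. For an affine morphism $\pi : P \to X$, giving an action of $\Gm^n$ on $P$ over $X$ is the same datum as giving a $\ZZ^n$-grading $\pi_* \sO_P = \oplus_{u} (\pi_* \sO_P)_u$ of the quasi-coherent algebra $\pi_* \sO_P$, via the weight (coaction) decomposition. Starting from a torsor $P$, local triviality $P|_U \cong U \times \Gm^n$ gives $\pi_* \sO_P |_U \cong \sO_U[\ZZ^n]$, so each weight space $(\pi_* \sO_P)_u$ is locally free of rank one and the multiplication maps $(\pi_* \sO_P)_u \otimes (\pi_* \sO_P)_v \to (\pi_* \sO_P)_{u+v}$ are isomorphisms; this produces an object of $\bA$. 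Conversely, from $\sA \in \bA$ I would form $P = \spec_X \sA$ with the $\Gm^n$-action coming from the grading and verify that it is a $\widehat{\NN^n}$-torsor.

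The step I expect to be the main obstacle is this converse verification, i.e.\ identifying precisely which graded algebras give torsors. The torsor condition asks that the action (shear) map $\Gm^n \times_X P \to P \times_X P$ be an isomorphism, which after translating through $\spec_X$ becomes the requirement that all multiplication maps $\sA_u \otimes \sA_v \to \sA_{u+v}$ be isomorphisms. This is the heart of the matter, and it is exactly the content that distinguishes torsors among graded algebras (in the precise formulation of \cite{BV} it is built into the meaning of invertibility in each degree). Granting it, $\sA_0$ contains the unit and is invertible, so $\sA_0 = \sO_X$, and then $\sA_u \otimes \sA_{-u} \isoarrow \sO_X$ exhibits $\sA_{-u}$ as the inverse line bundle of $\sA_u$; choosing trivializing sections locally for the fppf (equivalently Zariski) topology and using the multiplication isomorphisms lets me identify $\sA$ with $\sO_X[\ZZ^n]$, whence $P$ is locally trivial and the action is free and transitive. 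Checking that the two constructions are mutually quasi-inverse and functorial then completes (i).

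For (ii), I would invoke the modular description of the quotient stack: a morphism $X \to [\spec \ZZ[\NN^n]/\widehat{\NN^n}]$ is the datum of a $\widehat{\NN^n}$-torsor $\pi : P \to X$ together with a $\widehat{\NN^n}$-equivariant morphism $P \to \spec \ZZ[\NN^n]$. By (i) the torsor corresponds to an object $\sA \in \bA$ with $P = \spec_X \sA$, and under the algebra--scheme duality a $\Gm^n$-equivariant morphism $P \to \spec \ZZ[\NN^n]$ is the same as a grading-preserving $\sO_X$-algebra homomorphism $\alpha : \sO_X[\NN^n] \to \sA$. This is exactly the extra datum in the definition of $\bB$, so combining the equivalence of (i) with this correspondence, and again tracking the direction-reversal coming from $\spec_X$, yields the asserted equivalence $\bB^{\mathrm{op}} \cong \{\, X \to [\spec \ZZ[\NN^n]/\widehat{\NN^n}]\,\}$. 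Once (i) is in hand, verifying that these assignments are mutually inverse and natural is routine bookkeeping.
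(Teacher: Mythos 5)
Your proposal is correct and follows essentially the same route as the paper, whose own proof is only a sketch deferring to \cite[Prop.\ 3.25 and Lemma 3.26]{BV}: the dictionary $P \mapsto \pi_*\sO_P$ between $\widehat{\NN^n}$-torsors and weight-graded algebras via relative $\spec$, plus the standard modular description of points of the quotient stack for (ii), with the same remark that $\Gm^n$-torsors (unlike the $\mu_n$-torsors needed in the more general setting of \cite{BV}) trivialize Zariski-locally. Your explicit identification of the torsor (shear-map) condition with the multiplication maps $\sA_u\otimes\sA_v\to\sA_{u+v}$ being isomorphisms is exactly the content the paper leaves implicit in the phrase ``each summand is invertible'' and outsources to \cite{BV}, so flagging and granting it in that formulation is the right resolution.
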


\begin{proof} This proposition is a summary of the discussion in \cite[p. 1343-1344]{BV}, in particular the proof of Proposition 3.25. The detailed proof can be found there. 
Here we will just illustrate the main idea behind the proof.

(i) The torsor $\pi: P\rightarrow X$ is determined by the sheaf of algebras $\pi_*(\sO_P)$
which has  a $\widehat{\NN^n}$-action and hence a weight grading. As the torsor
is locally trivial, the condition about the summands being invertible follows by
considering the algebra associated with the trivial torsor.

(ii) This follows from the standard description of the groupoid of $X$-points of a 
quotient stack. Finally, in \cite{BV}, the $\textbf{fppf}$ topology is needed but in 
the present work is  not. The setting in \textit{loc. cit.} is more general and
the monoids in question may have torsion, so that the torsor $P$ is a torsor over
$\mu_n$.  Such a torsor may not be trivial in the Zariski topology, unlike a ${\mathbb G}_m$-torsor. Hence a finer topology is needed. See the proof of \cite[Lemma 3.26]{BV}.
\end{proof}

\begin{corollary}
\label{Xpoints}
There is an equivalence of categories between the groupoid of symmetric monoidal 
functors $\NN^n\rightarrow \Div X$ and the groupoid of $X$-points of 
$[\spec \ZZ[\NN^n]/\widehat{\NN^n}]$.
\end{corollary}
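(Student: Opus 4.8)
The plan is to deduce the corollary from part (ii) of Proposition \ref{p:equiv} by exhibiting an equivalence between the groupoid of symmetric monoidal functors $\NN^n \to \Div X$ and the groupoid $\bB$ of pairs $(\sA,\alpha)$. Since all the categories in sight are groupoids, the inversion functor identifies $\bB$ with $\bB^{\rm op}$, so producing such an equivalence and composing with \ref{p:equiv}(ii) will give the claim; I will therefore ignore the distinction between $\bB$ and $\bB^{\rm op}$ throughout, keeping track only of the underlying data.

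First I would write down the functor $\Phi$ from symmetric monoidal functors to $\bB$. Given $L\colon \NN^n \to \Div X$, write $L(u)=(\mathcal{L}_u,s_u)$ and extend the underlying line bundle to a monoidal functor $\ZZ^n \to \mathrm{Pic}(X)$ by setting $\mathcal{L}_u = \mathcal{L}_1^{\otimes u_1}\otimes\cdots\otimes\mathcal{L}_n^{\otimes u_n}$, using duals for the negative entries. I then set
\[
\sA_L \;=\; \bigoplus_{u\in\ZZ^n}\mathcal{L}_u,
\]
graded by $\ZZ^n=\widehat{\widehat{\NN^n}}$, with multiplication $\mathcal{L}_u\otimes\mathcal{L}_v \to \mathcal{L}_{u+v}$ supplied by the monoidal structure isomorphisms of $L$; the associativity and commutativity constraints of a symmetric monoidal functor are exactly what make $\sA_L$ an associative, commutative, $\ZZ^n$-graded $\sO_X$-algebra with invertible summands. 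The structure map $\alpha_L\colon \sO_X[\NN^n]\to \sA_L$ is the graded morphism that is the section $s_u\colon\sO_X\to\mathcal{L}_u$ in each degree $u\in\NN^n$; multiplicativity of $\alpha_L$ follows from the compatibility of the $s_u$ with the monoidal structure. A monoidal natural isomorphism $L\Rightarrow L'$ consists of isomorphisms $\mathcal{L}_{e_i}\to\mathcal{L}'_{e_i}$ carrying $s_{e_i}$ to $s'_{e_i}$, and these assemble into a graded algebra isomorphism $\sA_L\to\sA_{L'}$ commuting with the $\alpha$'s; this defines $\Phi$ on morphisms.

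To finish I would show $\Phi$ is essentially surjective and fully faithful. Full faithfulness is a direct unwinding: a morphism in $\bB$ is a graded algebra isomorphism commuting with $\alpha$, and restricting to the generating degrees $e_1,\dots,e_n$ recovers a monoidal natural isomorphism, inverse to the assignment above. For essential surjectivity, given $(\sA,\alpha)\in\bB$ I would define a symmetric monoidal functor by $L(e_i)=(\sA_{e_i},\,\alpha(x_i))$, where $x_i$ is the degree-$e_i$ generator of $\sO_X[\NN^n]$, the monoidal structure isomorphisms being read off from the multiplication of $\sA$. The one point that needs care, and which I expect to be the main obstacle, is to check that for such an algebra the multiplication maps $\sA_u\otimes\sA_v\to\sA_{u+v}$ are \emph{isomorphisms}, so that $\sA$ is reconstructed from its degree-$e_i$ pieces and $\Phi(L)\cong(\sA,\alpha)$. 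This is where the link with torsors in \ref{p:equiv}(i) is used: the algebras occurring in $\bB$ are precisely the pushforward algebras of $\widehat{\NN^n}$-torsors, for which the graded multiplication is invertible because it is so locally, on a trivialisation of the torsor. Granting this, $\Phi$ is an equivalence, and composing with Proposition \ref{p:equiv}(ii) yields the desired equivalence with the groupoid of $X$-points of $[\Spec\ZZ[\NN^n]/\widehat{\NN^n}]$.
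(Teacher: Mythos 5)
Your proposal is correct and follows essentially the same route as the paper: the paper's proof cites \cite[Proposition 3.25]{BV} and sketches exactly your construction, namely the graded algebra $\sA=\bigoplus_{u\in\ZZ^n}L_1^{u_1}\otimes\cdots\otimes L_n^{u_n}$ built from the symmetric monoidal functor, with the sections giving the map $\sO_X[\NN^n]\rightarrow\sA$, which is then fed into Proposition \ref{p:equiv}(ii). Your elaboration of full faithfulness and essential surjectivity, including the correct identification of the invertibility of the multiplication maps $\sA_u\otimes\sA_v\rightarrow\sA_{u+v}$ (via the torsor description in Proposition \ref{p:equiv}(i)) as the key point, is precisely the detail the paper delegates to \cite{BV}.
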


\begin{proof} For details see \cite[Proposition 3.25]{BV}. 
In essence, the symmetric monoidal functor determined by $(L_1, s_1), \dots, (L_n, s_n)$
produces the graded sheaf of algebras 
\[
\sA = \bigoplus_{\vec{u}\in \ZZ^n} L_1^{u_1}\otimes \ldots \otimes L_n^{u_n}. 
\]
The sections produce an algebra map
\[
\sO_X[\NN^n]\rightarrow \sA.
\]
\end{proof}

\begin{definition} 
  Let $\vr=(r_1, r_2, ..., r_n)$ be a collection of positive natural numbers. 
  We denote by $r_i\NN$ the monoid $\{ vr_i | v\in \NN\}$. 
  We denote by $\vr \NN^n$ the monoid 
  \[
  \vr \NN^n = r_1\NN \times r_2\NN \times \ldots r_n\NN.
  \]
  We will view our symmetric monoidal functor above as a functor
  \[
  L:\vr\NN^n\rightarrow \Div X
  \]
  in the following way :
  \[
  (r_1\alpha_1, r_2\alpha_2,\ldots, r_n\alpha_n) \longmapsto
  (L_1,s_1)^{\otimes \alpha_1}\otimes \ldots (L_n, s_n)^{\otimes \alpha_n}.
  \]
  Consider the natural inclusion of monoids $j_{\vr}:\vr\mathbb{N}^n\hookrightarrow  \mathbb{N}^n$. 
   The \emph{category of $\vr$th roots of $L$} denoted by $(L)_\vr$, is defined as follows :
  
   Its objects are pairs $(M, \alpha)$, where $M:   \mathbb{N}^n \rightarrow  \mathfrak{Div}X$ is a symmetric monoidal functor, and $\alpha: L \rightarrow M \circ j $ is an isomorphism of symmetric monoidal functors. 
   
   An arrow from $(M, \alpha)$ to $(M', \alpha')$ is an isomorphism h$: M \rightarrow M'$ of symmetric monoidal functors $ \mathbb{N}^n \rightarrow  \mathfrak{Div}X$, such that the diagram 
   \begin{center}
\begin{tikzpicture}
\matrix (m) [matrix of math nodes, row sep=3em, column sep = 3em]
{& L \\
M \circ j &   & M'\circ j\\};
\draw [->] (m-1-2) to node[above] {$\alpha$} (m-2-1);
\draw [->] (m-1-2) to node[above] {$\alpha'$} (m-2-3);
\draw[->] (m-2-1) to node{$h\circ j$} (m-2-3);
\end{tikzpicture}
\end{center}
   commutes.
\end{definition} 
 
 This category is in fact a groupoid as a morphism $\phi$ in $\mathfrak{Div}X$, whose tensor power
 $\phi^{\otimes k}$ is an isomorphism, must be an isomorphism to begin with.
  
  Given a morphism of schemes $t: T\rightarrow X$ there is pullback functor
  $$ t^* : \Div X\rightarrow \Div T . $$
  Hence we can form the category of roots $(t^* \circ L)_\vr $. This construction pastes 
  together to produce a pseudo-functor
  \[
  ({\rm Sch}/X)\rightarrow \Div_X,
  \]
  where $\Div_X\rightarrow {\rm Sch}/X$ is the symmetric monoidal stack described
  in \cite[p. 1335]{BV}.
  
\begin{definition} In the above situation, the fibered category associated with this 
pseudo-functor is called 
\emph{the stack of roots} associated with $L$ and $\vr$. It is denoted by $X_{L,\vr}$. 
\end{definition}

We will often denote the stack of roots by
\[
X_{L,\vr} = X_{(L_1,s_1,r_1),\ldots ,(L_n,s_n,r_n) }.
\]

There are also two equivalent definitions of the stack $X_{L,\vr}$ and the equivalence is proved in \cite[Proposition 4.13]{BV} and \cite[Remark 4.14]{BV}. Lets recall the
description of this stack as a fibered product.
\begin{proposition}
The stack $X_{L,\vr}$ is isomorphic to the fibered product
$$ X\times_{\spec \ZZ[\vec{r}\NN^n]} [\spec \ZZ[\NN^n]/ \widehat{\NN^n}] $$
\end{proposition}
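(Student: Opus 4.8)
The plan is to establish the isomorphism by comparing the two stacks fibre by fibre: for every $X$-scheme $t:T\rightarrow X$ I would describe the groupoid of $T$-points of the fibre product and match it, naturally in $T$, with the fibre $(t^*L)_\vr$ of $X_{L,\vr}$, and then conclude by the $2$-categorical Yoneda lemma. The whole argument rests on Corollary \ref{Xpoints}, which identifies a $T$-point of $[\spec\ZZ[\NN^n]/\widehat{\NN^n}]$ with a symmetric monoidal functor $\NN^n\rightarrow \Div T$, together with its evident analogue for $\vr\NN^n$ and $[\spec\ZZ[\vr\NN^n]/\widehat{\vr\NN^n}]$.

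First I would pin down the two structure morphisms into the base $[\spec\ZZ[\vr\NN^n]/\widehat{\vr\NN^n}]$. The morphism out of $X$ is the one that corresponds, under Corollary \ref{Xpoints}, to the chosen functor $L:\vr\NN^n\rightarrow \Div X$. The morphism out of $[\spec\ZZ[\NN^n]/\widehat{\NN^n}]$ is the one induced by the monoid inclusion $j:\vr\NN^n\hookrightarrow \NN^n$; concretely it comes from the $\widehat{\NN^n}$-equivariant map $\spec\ZZ[\NN^n]\rightarrow \spec\ZZ[\vr\NN^n]$ dual to $j$ (the torus acting on the target through the power map $\widehat{\NN^n}\rightarrow\widehat{\vr\NN^n}$), and under the dictionary of Corollary \ref{Xpoints} it is simply the restriction operation $M\mapsto M\circ j$ on symmetric monoidal functors.

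With these maps fixed I would unwind the $2$-fibre product. An object over $t:T\rightarrow X$ is a symmetric monoidal functor $M:\NN^n\rightarrow \Div T$ together with an isomorphism $\phi$ identifying the two composites into the base. Passing both composites through Corollary \ref{Xpoints}, the first is $t^*L$ and the second is $M\circ j$, so $\phi$ becomes an isomorphism $t^*L\isoarrow M\circ j$ of symmetric monoidal functors $\vr\NN^n\rightarrow \Div T$. This is exactly the datum $(M,\alpha)$, with $\alpha=\phi$, defining an object of $(t^*L)_\vr$, and an analogous reading of morphisms shows the two groupoids are equivalent.

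The final step is to upgrade these fibrewise equivalences to an equivalence of stacks over $\mathrm{Sch}/X$. For this I would check that the identifications are natural in $t$, i.e. compatible with base change $T'\rightarrow T$; this is automatic because every ingredient — the equivalence of Corollary \ref{Xpoints}, restriction along $j$, and the pullback functors on $\Div$ — commutes with pullback. The one point demanding care, and the place I expect the real work to sit, is the symmetric monoidal coherence: one must verify that the $2$-isomorphism $\phi$ in the fibre product matches the isomorphism of monoidal functors compatibly with the associativity and symmetry constraints, and that this matching is itself functorial. Once that bookkeeping is done, $2$-Yoneda yields the desired isomorphism between $X_{L,\vr}$ and the fibre product.
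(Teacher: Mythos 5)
Your argument is correct and is essentially the same as the one the paper relies on: the paper gives no independent proof of this proposition but cites \cite[Proposition 4.13 and Remark 4.14]{BV}, and your unwinding of $T$-points of the $2$-fibre product via Corollary \ref{Xpoints} (objects become pairs $(M,\phi)$ with $\phi: t^*L \isoarrow M\circ j$, i.e.\ exactly the objects of $(t^*L)_\vr$) is precisely the Borne--Vistoli argument. One point in your favour: you correctly read the base of the fibre product as the quotient stack $[\spec \ZZ[\vr\NN^n]/\widehat{\vr\NN^n}]$ rather than the scheme $\spec \ZZ[\vr\NN^n]$ that literally appears in the statement --- since $L$ only furnishes a morphism $X\rightarrow [\spec \ZZ[\vr\NN^n]/\widehat{\vr\NN^n}]$ and no canonical map $X\rightarrow \spec \ZZ[\vr\NN^n]$ exists in general, the statement as printed is a typo, and your reading is the one consistent with \cite{BV} and with the paper's own Proposition \ref{p:quotient}.
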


 According to (a slightly modified version of) Corollary \ref{Xpoints} a symmetric monoidal functor   $L: \vr \NN^n \rightarrow \mathfrak{Div}X$ corresponds to a morphism $X \rightarrow [\spec \ZZ[\vr \NN^n]/\widehat{\vr \NN^n}]$ which in turn corresponds to a $\widehat{\vr \NN^n} $-torsor $\pi : P \rightarrow X $ and a $\widehat{\vr \NN^n} $-equivariant morphism $P \rightarrow \spec \ZZ[\vr \NN^n].$  This gives

\begin{proposition}
\label{p:quotient}
The stack $X_{L,\vr}$ is isomorphic to the quotient stack
$$ [P \times_{\spec \ZZ[\vec{r}\NN^n]} \spec \ZZ[\NN^n]  / \widehat{\NN^n}], $$
where the action on the first factor is defined through the dual of the inclusion $j_{\vr} : \vr  \NN^n \hookrightarrow \NN^n. $

\end{proposition}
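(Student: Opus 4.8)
The plan is to rewrite the fibred-product description of the preceding proposition using the torsor presentation of the classifying map of $L$, and to recognise the result as the asserted quotient stack. Abbreviate $G=\widehat{\NN^n}$, $G'=\widehat{\vr\NN^n}$, $W=\spec\ZZ[\NN^n]$ and $V=\spec\ZZ[\vr\NN^n]$. The inclusion $j_{\vr}:\vr\NN^n\hookrightarrow\NN^n$ induces, on monoid algebras, the inclusion $\ZZ[\vr\NN^n]\hookrightarrow\ZZ[\NN^n]$ and hence a morphism $q:W\to V$, and, on duals, the homomorphism of diagonalisable groups $G\to G'$. These are compatible: $q$ is $G$-equivariant for the tautological $G$-action on $W$ and the $G$-action on $V$ obtained from its $G'$-action via $G\to G'$, so $q$ descends to $\bar q:[W/G]\to[V/G']$. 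Via the atlas $V\to[V/G']$, the fibre product of the preceding proposition is the $2$-fibre product $X\times_{[V/G']}[W/G]$, taken along $\bar q$ and along the classifying morphism $X\to[V/G']$ determined by $L$.

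By the (modified) Corollary \ref{Xpoints} this classifying morphism is exactly the one whose pullback of the canonical $G'$-torsor $V\to[V/G']$ is the torsor $\pi:P\to X$ equipped with its $G'$-equivariant map $g:P\to V$; equivalently $P\cong X\times_{[V/G']}V$ and $X\cong[P/G']$. Before comparing, I would check that the $G$-action on $P\times_V W$ appearing in the statement is well defined: $P$ carries the $G$-action pulled back from its $G'$-action along $G\to G'$ (this is the meaning of ``through the dual of $j_{\vr}$''), $W$ its tautological $G$-action, and both $g$ and $q$ are $G$-equivariant into $V$ with the $G$-action through $G\to G'$; the diagonal $G$-action on $P\times_V W$ is therefore defined and $[P\times_V W/G]$ makes sense.

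The heart of the proof is the identification $X\times_{[V/G']}[W/G]\cong[P\times_V W/G]$, which I would establish on $T$-valued points, naturally in $T$. A point of $[P\times_V W/G]$ is a $G$-torsor $F\to T$ with a $G$-equivariant map to $P\times_V W$, i.e.\ a pair of $G$-equivariant maps $F\to P$ and $F\to W$ agreeing after composition to $V$. From it I extract the point of $[W/G]$ given by $F\to W$; the point of $X=[P/G']$ given by the associated $G'$-torsor $E:=F\times^{G}G'$ with the $G'$-equivariant map induced by $F\to P$; and the identification of their two images in $[V/G']$, both being $E\to V$, the agreement being precisely the fibre-product condition. Conversely, a point $\big((E\to T,\,E\to P),\,(F\to T,\,F\to W),\,\theta\big)$ of $X\times_{[V/G']}[W/G]$ carries an isomorphism $\theta:E\cong F\times^{G}G'$ compatible with the maps to $V$; transporting $E\to P$ along the canonical $F\to F\times^{G}G'$ produces a $G$-equivariant $F\to P$, which together with $F\to W$ gives a map $F\to P\times_V W$. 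I would then check these assignments are mutually inverse and natural in $T$.

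Combining this equivalence with the preceding proposition gives $X_{L,\vr}\cong[P\times_V W/G]$, as claimed. The step I expect to demand the most care is the bookkeeping with the contracted product $F\times^{G}G'$ along $G\to G'$: verifying that it genuinely computes the image in $[V/G']$ on both sides and that the two constructions are mutually inverse. The remainder is formal manipulation of quotient stacks and fibre products.
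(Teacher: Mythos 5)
Your argument is correct and follows exactly the route the paper itself indicates: the paper's proof of this proposition is essentially a citation to \cite{BV}, preceded by your same setup (the classifying morphism $X\rightarrow[\spec\ZZ[\vr\NN^n]/\widehat{\vr\NN^n}]$, the torsor $P$ with its equivariant map to $\spec\ZZ[\vr\NN^n]$, and base change of the quotient presentation), so your $T$-point computation with the contracted product $F\times^{\widehat{\NN^n}}\widehat{\vr\NN^n}$ simply supplies the details the paper delegates to Borne--Vistoli. Note also that you have, correctly, read the base of the fibred product in the preceding proposition as the quotient stack $[\spec\ZZ[\vr\NN^n]/\widehat{\vr\NN^n}]$ rather than the scheme $\spec\ZZ[\vr\NN^n]$ as printed --- there is in general no map $X\rightarrow\spec\ZZ[\vr\NN^n]$, so the quotient stack is the intended (and Borne--Vistoli's) reading.
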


\begin{proof}
See \cite[p.1350]{BV}.
\end{proof}

We recall the definition of parabolic sheaf, see \cite[Definition 5.6]{BV}.

\begin{definition}
\label{parabolic}
 Consider a scheme $X$, an inclusion $\vr\ZZ^n \subseteq \ZZ^n$ and a symmetric monoidal functor
 $L:\vr\ZZ^n \rightarrow \Div_X$, defined by
 $$
 L_u=L(u)=L_1^{\alpha_1} \otimes \ldots L_n^{\alpha_n},
 $$
 where $u=(r_1\alpha_1,\ldots, r_n\alpha_n)$, and each $\alpha_i \in \ZZ$. 
 A \emph{parabolic sheaf} $(E, \rho) $ on $(X,L)$ with denominators $\vr$ consists 
 of the following data:
 \begin{enumerate}[(a)]
 \item A functor $E: \mathbb{Z}^n \rightarrow \mathfrak{QCoh}X,$ denoted by $v \mapsto E_v$ on objects and $b \mapsto E_b $ on arrows. 
 \item For any $u \in \vec{r} \mathbb{Z}^n $ and $v \in \mathbb{Z}^n ,$ an isomorphism of $ \mathcal{O}_X$-modules:
 $$ \rho^E_{u,v}: E_{u+v} \simeq L_u \otimes_{\mathcal{O}_X}E_v.$$
    This map is called the \textit{pseudo-period isomorphism}.  
   \end{enumerate}  
    This data are required to satisfy the following conditions. Let $u, u' \in \vec{r} \mathbb{Z}^n ,$ $a =(r_1\alpha_1,\ldots, r_n\alpha_n)\in \vec{r} \mathbb{N}^n, b \in  \mathbb{N}^n, v \in  \mathbb{Z}^n .$ Then the following diagrams commute.
    
    \begin{enumerate}[(i)]
    
\item $\phantom{text}$
   \begin{center}
\begin{tikzpicture}
  \node (TL) {$E_v$};
  \node (BL) [below of=TL] {$\mathcal{O}_X \otimes E_v$};
  \node (TR) [node distance=5cm,right of=TL] {$E_{a +v}$};
  \node (BR) [node distance=5cm,right of=BL] {$ L_a \otimes E_v $};
  \draw[->] (TL) to node {$E_a$} (TR);
  \draw[->] (TL) to node {$\simeq$} (BL);
  \draw[->] (BL) to node {$\sigma_a^L\otimes id_{E_v}$} (BR);
  \draw[->] (TR) to node {$\rho^E_{a,v}$} (BR); ,
\end{tikzpicture}
\end{center}
  where $\sigma_a=\sigma^{\alpha_1}\otimes\ldots\sigma^{\alpha_n}\in {\rm H}^0(X,L_a)$

   \item $\phantom{text}$
   \begin{center}
\begin{tikzpicture}
  \node (TL) {$E_{u+v}$};
  \node (BL) [below of=TL] {$E_{u+b+v}$};
  \node (TR) [node distance=5cm,right of=TL] {$ L_u\otimes E_v$};
  \node (BR) [node distance=5cm,right of=BL] {$ L_u\otimes E_{b+v} $};
  \draw[->] (TL) to node {$\rho^E_{u,v}$} (TR);
  \draw[->] (TL) to node {$E_b$} (BL);
  \draw[->] (BL) to node {$\rho^E_{u,b+v}$} (BR);
  \draw[->] (TR) to node {$id\otimes E_b$} (BR); ,
\end{tikzpicture}
\end{center}

$\phantom{text}$

  \item $\phantom{text}$
   \begin{center}
\begin{tikzpicture}
  \node (TL) {$E_{u+u'+v}$};
  \node (BL) [below of=TL] {$L(u) \otimes E_{u'+v}$};
  \node (TR) [node distance=6cm,right of=TL] {$L_{u+u'} \otimes  E_v$};
  \node (BR) [node distance=6cm,right of=BL] {$ L_u \otimes L_{u'}  \otimes E_{v} $};
  \draw[->] (TL) to node {$\rho^E_{u+u',v}$} (TR);
  \draw[->] (TL) to node {$\rho^E_{u, u'+v}$} (BL);
  \draw[->] (BL) to node {$id \otimes \rho^E_{u',v}$} (BR);
  \draw[->] (TR) to node {$\mu \otimes id$} (BR); ,
\end{tikzpicture}
\end{center}

\item The map
 \begin{center}
   \begin{tikzpicture}
 \matrix (m) [matrix of math nodes, row sep=3em, column sep = 4em]
{E_v = E_{0+v} & \mathcal{O}_X \otimes E_v \\ };
\draw [->] (m-1-1) to node[above] {$\rho^E_{0,v}$} (m-1-2);
    \end{tikzpicture}
  \end{center} 
 is the natural isomorphism.  
    \end{enumerate}
\end{definition}

\begin{definition}
A parabolic sheaf $(E, \rho) $ is said to be \textit{coherent} if for each $v \in \ZZ^n$ the sheaf $E_v$ is a coherent sheaf on $X.$ 
\end{definition}

\begin{theorem}[Borne, Vistoli ]
\label{t:parabolic}
Let $X$ be a scheme and $L$ is a monoidal functor  defined as in the beginning of the subsection. Then there is a canonical tensor equivalence of abelian categories between the category $\mathfrak{QCoh} X_{L,\vr} $ and the category of parabolic sheaves on $X$, associated with $L$. 
\end{theorem}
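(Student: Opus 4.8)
The plan is to pass through the presentation of the root stack as a quotient stack given in Proposition \ref{p:quotient} and then describe quasi-coherent sheaves on this quotient in terms of graded modules. Write $Q = P \times_{\spec \ZZ[\vr\NN^n]} \spec \ZZ[\NN^n]$, so that $X_{L,\vr} \cong [Q/\widehat{\NN^n}]$. By the general description of quasi-coherent sheaves on a quotient stack, $\Qcoh X_{L,\vr}$ is equivalent to the category of $\widehat{\NN^n}$-equivariant quasi-coherent sheaves on $Q$. First I would record that the structure morphism $q: Q \to X$ is affine: it is built by base change and composition from the finite morphism $\spec \ZZ[\NN^n] \to \spec \ZZ[\vr\NN^n]$ (the extension $\ZZ[\vr\NN^n] \subseteq \ZZ[\NN^n]$ being module-finite) and from the torsor $P \to X$, both of which are affine.

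The second step exploits that $\widehat{\NN^n} = \Hom(\NN^n, \Gm) \cong \Gm^n$ is a diagonalizable group scheme, for which the category of equivariant quasi-coherent sheaves (equivalently, comodules over its coordinate Hopf algebra) is canonically equivalent to the category of quasi-coherent sheaves graded by the character lattice $\ZZ^n = \widehat{\widehat{\NN^n}}$. Combining this with the affineness of $q$, a $\widehat{\NN^n}$-equivariant quasi-coherent sheaf on $Q$ becomes the same datum as a $\ZZ^n$-graded quasi-coherent module over the $\ZZ^n$-graded sheaf of $\sO_X$-algebras $\sB := q_* \sO_Q$. Thus the problem reduces to identifying $\sB$ and unwinding the graded-module structure.

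The third step is the explicit computation of $\sB$. Using Corollary \ref{Xpoints} and Proposition \ref{p:equiv}, the torsor $P$ encodes the graded algebra assembled from the line bundles $L_i$, while the base change along $\spec \ZZ[\NN^n] \to \spec \ZZ[\vr\NN^n]$ refines the grading from $\vr\ZZ^n$ to all of $\ZZ^n$. One finds $\sB = \bigoplus_{v \in \ZZ^n} \sB_v$, where the graded pieces indexed by $u \in \vr\ZZ^n$ are the invertible sheaves $L_u$, with their multiplication governed by the tensor structure of $L$ and the sections $s_i$. A $\ZZ^n$-graded $\sB$-module is then precisely a functor $E: \ZZ^n \to \Qcoh X$ (the graded pieces, with arrows induced by multiplication by the degree-$e_i$ monomials, i.e. the sections) together with the action of the invertible pieces $\sB_u \cong L_u$ for $u \in \vr\ZZ^n$; the latter produces exactly the pseudo-period isomorphisms $\rho^E_{u,v}: E_{u+v} \simeq L_u \otimes E_v$. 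Matching the associativity and unit constraints of the graded-module structure against the cocycle relations in $\sB$ then yields the commutative diagrams (i)--(iv) of Definition \ref{parabolic}, and conversely every parabolic sheaf reassembles into such a graded module.

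The main obstacle is the bookkeeping in this last step: one must check that the compatibility diagrams defining a parabolic sheaf are genuinely equivalent to the associativity and unit axioms of a $\ZZ^n$-graded $\sB$-module, keeping careful track of how the denominators $\vr$ separate the invertible degrees (which produce the periodicity isomorphisms) from the remaining degrees (which contribute only the functor $E$ and its structure maps). Finally, the equivalence is monoidal because the tensor product of graded $\sB$-modules corresponds term by term, via the pseudo-period isomorphisms, to the tensor product of parabolic sheaves, and quasi-coherence is preserved in both directions since $q$ is affine; the coherent version follows by restricting to those modules whose graded pieces are coherent. For the complete argument we refer to \cite{BV}.
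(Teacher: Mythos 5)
Your proposal follows essentially the same route as the paper's proof: both pass through the quotient presentation of Proposition \ref{p:quotient}, identify quasi-coherent sheaves on $X_{L,\vr}$ with $\widehat{\NN^n}$-equivariant sheaves on $P \times_{\spec \ZZ[\vr\NN^n]} \spec \ZZ[\NN^n]$, reinterpret these as $\ZZ^n$-graded modules over the graded algebra $\sA \otimes_{\ZZ[\vr\NN^n]} \ZZ[\NN^n]$ built from $L$, and defer the final bookkeeping (matching the graded-module axioms with the parabolic diagrams) to \cite{BV}. The extra details you supply --- affineness of the structure map, diagonalizability of $\widehat{\NN^n} \cong \Gm^n$, and the explicit identification of the graded pieces with the $L_u$ --- are correct and consistent with the paper's sketch.
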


\begin{proof} See \cite[Proposition 5.10, Theorem 6.1]{BV} for details.
The proof relies on the description of the stack as a quotient as in Proposition \ref{p:quotient}. 
From this description, sheaves on the stack are equivariant sheaves on 
$$
P\times_{\spec \ZZ[\vr \NN^n]} \times \spec \ZZ[\NN^n].
$$
As remarked in the proof of Proposition \ref{p:equiv}, the torsor $P$ is obtained from
a sheaf of algebras on $X$. The sheaf of algebras $\sA$ is constructed from 
the functor $L$ by taking a direct sum construction, it has a natural grading.
It follows that the scheme 
\[
P\times_{\spec \ZZ[\vr \NN^n]} \spec \ZZ[\NN^n] =
\spec(\sA\otimes_{\ZZ[\vr\NN^n]} \ZZ[\NN^n]).
\]
The algebra on the right has a natural $\ZZ[\NN^n]$ grading, see the corollary below for a local description. It follows that the
equivariant sheaves on the scheme in question are just graded modules over this algebra.
The proof follows be reinterpreting the graded modules in terms of the symmetric monoidal
functor $L$.
\end{proof}
  Actually we can add the finiteness condition to the previous theorem and get the following

\begin{corollary}
\label{equivalence}
Let $X$ be locally noetherian scheme.
There is a canonical tensor equivalence of abelian categories between the category $\mathfrak{Coh} X_{L,\vr} $ and the category of coherent parabolic sheaves on $X,$ associated with $L.$ 
\end{corollary}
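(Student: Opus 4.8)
The plan is to deduce the coherent statement from the already-established quasi-coherent equivalence of Theorem \ref{t:parabolic} by checking that the equivalence and its inverse preserve the relevant finiteness conditions. Since \cite{BV} provide a \emph{tensor} equivalence
\[
\Phi : \mathfrak{QCoh}\, X_{L,\vr} \isoarrow \{\text{parabolic sheaves on } (X,L) \text{ with denominators } \vr\},
\]
the task reduces to showing that $\Phi$ restricts to an equivalence on the full subcategories cut out by coherence, so that one obtains the desired equivalence on $\mathfrak{Coh}\, X_{L,\vr}$. The key observation is that coherence on $X_{L,\vr}$ and coherence of a parabolic sheaf (in the sense that each $E_v$ is coherent on $X$) must match up under $\Phi$.

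First I would recall from the proof of Theorem \ref{t:parabolic} that the equivalence is local on $X$ and is built by reinterpreting graded modules over $\sA \otimes_{\ZZ[\vr\NN^n]} \ZZ[\NN^n]$ as parabolic sheaves, where the grading is by $\ZZ^n$ and the graded piece in weight $v$ corresponds precisely to the sheaf $E_v$. Working locally on a noetherian affine $U = \Spec R$ over which the line bundles $L_i$ trivialise, the sheaf of algebras $\sA$ becomes a finitely generated $R$-algebra, and a quasi-coherent sheaf on $X_{L,\vr}$ is a graded module $N = \bigoplus_{v} N_v$ over this algebra. The functor $\Phi$ sends such $N$ to the parabolic sheaf with $E_v = \widetilde{N_v}$. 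Under this dictionary, the pseudo-period isomorphisms $\rho^E_{u,v}$ correspond to the module structure identifying $N_{u+v}$ with $(L_u)\otimes N_v$ for $u \in \vr\ZZ^n$, which means the whole graded module is determined up to the finitely many weights $v$ in a fundamental domain for the $\vr\ZZ^n$-action.

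The heart of the argument is the equivalence
\[
E_v \text{ coherent for all } v \in \ZZ^n \iff E_v \text{ coherent for the finitely many } v \in [\vec 0,\vr-\vec 1] \iff N \text{ finitely generated over } \sA.
\]
The first equivalence holds because the pseudo-period isomorphisms force $E_{u+v}\cong L_u\otimes E_v$, and $L_u$ is invertible, so $E_{u+v}$ is coherent if and only if $E_v$ is; hence coherence of all the $E_v$ is controlled by a finite set of representatives. The second equivalence is where I expect the main obstacle to lie: one must verify that a $\ZZ^n$-graded module over the noetherian algebra $\sA\otimes_{\ZZ[\vr\NN^n]}\ZZ[\NN^n]$ is finitely generated as a module precisely when each of its (finitely many, up to period) graded pieces is coherent over $R$. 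The forward direction is routine, but the converse requires using that $\sA\otimes_{\ZZ[\vr\NN^n]}\ZZ[\NN^n]$ is finite as a module over $\sA$ (since $\NN^n$ is finite over $\vr\NN^n$), so finite generation in the fundamental domain propagates to the whole graded module; this is essentially where noetherianity of $X$ enters.

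Finally I would assemble these local statements into the global equivalence. Because $\Phi$ is already a tensor equivalence on quasi-coherent categories and the coherence conditions on both sides are local on $X$ and are interchanged by $\Phi$ fibrewise, the restriction of $\Phi$ and of its quasi-inverse to coherent objects are well defined and remain mutually inverse, yielding the claimed canonical tensor equivalence between $\mathfrak{Coh}\, X_{L,\vr}$ and the category of coherent parabolic sheaves. The tensor structure is inherited automatically, since the tensor product of coherent sheaves is coherent on both sides and $\Phi$ respects tensor products.
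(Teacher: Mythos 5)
Your proposal is correct and follows essentially the same route as the paper: reduce to an affine $X=\Spec(R)$ with trivialized $L_i$, identify quasi-coherent sheaves on $X_{L,\vr}$ with $\ZZ^n$-graded modules over the explicit algebra, and match finite generation of the module with coherence of the graded pieces via the pseudo-period isomorphisms on a fundamental domain. One small remark: the propagation in the converse is driven by the invertibility of the degree-$r_ie_i$ elements $t_i$ (so an epimorphism in degrees $0\le \mu_i\le r_i$ extends to all degrees), rather than by finiteness of $\sA\otimes_{\ZZ[\vr\NN^n]}\ZZ[\NN^n]$ over $\sA$ as you emphasize, though your earlier observation that the module is determined on a fundamental domain contains exactly this point.
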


\begin{proof}
  We will make use of the identifications in the above proof. Further the question is local on
  $X$, so we may assume that $X$ is in fact an affine scheme $\spec(R)$. By further restrictions we can assume that all the line bundles $L_i$ are in fact trivial, and
  we identify them with $R$. 
  In this
  situation the symmetric monoidal functor corresponds to a graded homomorphism
  \[
  \ZZ[X_1,X_2,\ldots, X_n]\rightarrow R[t_1^{\pm 1},t_2^{\pm 1},\ldots, t_n^{\pm 1}]
  \]
  sending $X_i$ to $x_it_i$ with $x_i\in R$. Further the morphism 
  $$\spec(\ZZ[\NN^n]) \rightarrow \spec \ZZ[\vec{r}\NN^n]$$ comes from an integral extension of
  algebras
  \[
  \ZZ[X_1,X_2,\ldots, X_n][Y_1,\ldots, Y_n]/(Y_1^{r_1}-X_1,\ldots Y_n^{r_n}-X_n)
  \]
  Then taking tensor products yields a $\ZZ^n$-graded algebra
  \[
 A=  R[t_1^{\pm 1},t_2^{\pm 1},\ldots, t_n^{\pm 1}][s_1,\ldots ,s_n]/(s_1^{r_1}-x_1t_1, \ldots, 
  s_n^{r_n}-x_nt_n)
  \]
  where $s_i$ has degree $(0,\ldots,0, 1,0\ldots, 0)=e_i$.  Consider now $M$ a finitely generated graded $A$-module.
  We can assume that the generators of $M$ are in fact homogeneous and hence there is an
  epimorphism 
  \[
  \oplus_{i=1}^p A(n_i) \rightarrow M.
  \] 
  The graded pieces of the module on the left are free of rank $p$ and hence 
  the graded pieces of $M$ are finitely generated. It follows that
  a finitely generated $A$-module gives rise to a parabolic sheaf with 
  values in the category of finitely generated $R$-modules, in other words
  coherent sheaves on $X$.
  
  Conversely suppose that each we have a graded $A$-module $M$ with each graded piece a finitely 
  generated $R$-module. We can find finitely many elements of $M$, let's say
  $\{ \alpha_1, \alpha_2, \ldots , \alpha_p\} $ of degrees 
  \[
  \deg (\alpha_i) = (\lambda_{i1},\lambda_{i2},\ldots, \lambda_{in})\in \ZZ^n
  \]
  with $0\le \lambda_{ij} \le r_j$ such that the associated morphism
  \[
  \phi:\oplus_{i=1}^p A(\deg(\alpha_i)) \rightarrow M 
  \]
  is an epimorphism in degrees 
  \[
  (\mu_1,\mu_2,\ldots , \mu_n)\in \ZZ^n
  \]
  whenever $0\le \mu_i \le r_i$. It follows that $\phi$ is an epimorphism at 
  multiplication by $t_i$ induces an isomorphism $M_{v}\isoarrow M_{v+e_i}$.
 \end{proof}

\subsection{An extension lemma}
The goal of this subsection is to slightly simplify the formulation of parabolic sheaf
in the present context using the pseudo-periodicity condition. This will be needed
to study $K$-theory in the next section. We let 
\[
e_i=(0,\ldots, 0, 1, 0, \ldots, 0)\in \NN^n,
\]
where the $1$ is in the $i$th spot.

\begin{definition}
	\label{d:EP}
Let $X$ be a scheme, and $L$ be a symmetric monoidal functor 
\[
L:\vr\ZZ^n\rightarrow \Div_X,
\]
determined by $n$ divisors $(L_i, s_i)$. 
An \emph{extendable pair} $(F,\rho)$ on $(X,L)$ consists of the following data: 
\begin{enumerate}[(a)]
 \item A functor
 $F_\bullet:   \vr I^n \rightarrow \mathfrak{QCoh}(X).$  
 
\item   For any $\alpha \in  \vr I^n  $ such that  $\alpha_i=r_i,$ an isomorphism of $\sO_X$-modules 

$$
\rho_{\alpha,\alpha-r_ie_i}:F_\alpha
\stackrel{\sim }{\rightarrow}  L_i \otimes F_{\alpha-r_ie_i}.
$$

We will frequently drop the subscripts from the notation involving $\rho$, when
they are clear from the context.
\end{enumerate}

This data is required to satisfy the following three conditions :
\begin{enumerate}[(EX1)]

\item \label{ex1} For all $i \in \{1, \ldots , n\}$  and $\alpha \in \vr I^n $   the following diagram commutes
\begin{center}
\begin{tikzpicture}
\matrix (m) [matrix of math nodes, row sep=4em, column sep=8em]
{F_\alpha & F_{\alpha+ (r_i-\alpha_i)e_i}  \\
   L_i \otimes F_{\alpha}& L_i \otimes F_{\alpha - \alpha_i  e_i } \\};
\draw[->] (m-1-1) to node {$F_{+(r_i - \alpha_i) e_i }$} (m-1-2);
\draw[->] (m-1-2) to node {$\rho$} (m-2-2);
\draw[->] (m-1-1) to node {$\sigma_i$} (m-2-1);
\draw[->](m-2-2) to node {$L_i \otimes F_{+\alpha_i  e_i } $}(m-2-1);
\end{tikzpicture}
\end{center}
where $\sigma_i$ is multiplication by the section $s_i$

\item \label{ex2} For all $i\neq j$ and $\alpha$ with $\alpha_i=r_i$ the following 
diagram commutes
\begin{center}
\begin{tikzpicture}
 \matrix(m)[matrix of math nodes, row sep=4em, column sep=7em]
 {F_\alpha & L_i \otimes F_{\alpha-r_ie_i}\\
 F_{\alpha+e_j} & L_i \otimes F_{\alpha+e_j-r_ie_i} \\};
 \draw[->] (m-1-1) to node[left] {$F_{\vec{e_j}}$} (m-2-1);
 \draw[->] (m-1-2) to node[right] {$F_{\vec{e_j}}$} (m-2-2);
 \draw[->] (m-1-1) to node {$\rho$} (m-1-2);
 \draw[->] (m-2-1) to node {$\rho$}(m-2-2);
\end{tikzpicture}
\end{center}

\item \label{ex3} For all $i$ and $j$ and $\alpha \in  \vr I^n $ with 
$\alpha_i=r_i$ and $\alpha_j=r_j$ the following diagram commutes 
\begin{center}
\begin{tikzpicture}
\matrix (m) [matrix of math nodes, row sep=4em, column sep=7em]
{F_\alpha & L_i \otimes  F_{\alpha-r_ie_i} \\
L_j \otimes F_{\alpha -r_je_j} & L_i \otimes L_j \otimes F_{\alpha-r_ie_i-r_je_j} \\};
 \draw[->] (m-1-1) to node[left] {$\rho$} (m-2-1);
 \draw[->] (m-1-2) to node[right] {$\rho$} (m-2-2);
 \draw[->] (m-1-1) to node {$\rho$} (m-1-2);
 \draw[->] (m-2-1) to node {$\rho$}(m-2-2);
\end{tikzpicture}
\end{center}

\end{enumerate}
\end{definition}

\begin{definition}
An extendable pair $(F, \rho)$ is called \textit{coherent} if for each $v \in \vr I^n$ the sheaf $F_v$ is a coherent sheaf on $X.$
\end{definition}

\begin{proposition}
Let $X$ be a scheme and $L$ a symmetric monoidal functor as in Definition \ref{parabolic}. Let $(E,\rho)$ be a parabolic sheaf on $(X,L)$ with denominators $\vr$. Then the restricted
functor $E|_{\vr I^n}$ produces an extendable pair on $(X,L)$.
\end{proposition}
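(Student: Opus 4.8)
The plan is to exhibit the two pieces of data of an extendable pair directly from $E$ and then to verify (EX1)--(EX3) by specialising the three commutative diagrams of Definition~\ref{parabolic}. Throughout I write $\rho^E_{u,v}$ for the pseudo-period isomorphisms of the parabolic sheaf and reserve $\rho$ for those of the extendable pair.

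Datum (a) is immediate: the poset $\vr I^n$ sits inside $\ZZ^n$ as a full subposet, so $F_\bullet := E|_{\vr I^n}$ is an honest functor $\vr I^n \to \mathfrak{QCoh}(X)$. For datum (b), fix $\alpha \in \vr I^n$ with $\alpha_i = r_i$. Since $r_i e_i \in \vr\ZZ^n$ and $L_{r_i e_i} = L_i$, I set
\[
\rho_{\alpha,\alpha - r_i e_i} := \rho^E_{r_i e_i,\, \alpha - r_i e_i} \colon E_\alpha \isoarrow L_i \otimes E_{\alpha - r_i e_i}.
\]
As $(\alpha - r_i e_i)_i = 0$ with the other coordinates unchanged, $\alpha - r_i e_i$ again lies in $\vr I^n$; the same bookkeeping shows that every object occurring in (EX1)--(EX3) stays inside $\vr I^n$, so the three diagrams make sense for $F_\bullet$.

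Two of the axioms are then essentially free. Condition (EX2) (for $\alpha_i = r_i$ and, so that $\alpha+e_j\in\vr I^n$, also $\alpha_j < r_j$) is literally the instance of diagram (ii) of Definition~\ref{parabolic} with $u = r_i e_i$, $b = e_j$ and $v = \alpha - r_i e_i$: under our definition of $\rho$ the two squares coincide. Condition (EX3) follows from the associativity diagram (iii) of Definition~\ref{parabolic}: applying it once with $(u,u') = (r_i e_i, r_j e_j)$ and once with $(u,u') = (r_j e_j, r_i e_i)$, both at $v = \alpha - r_i e_i - r_j e_j$, expresses the top--right and left--bottom paths of (EX3) as $(\mu \otimes \mathrm{id}) \circ \rho^E_{r_i e_i + r_j e_j, v}$ and $(\mu' \otimes \mathrm{id}) \circ \rho^E_{r_j e_j + r_i e_i, v}$ respectively. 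Since $r_i e_i + r_j e_j = r_j e_j + r_i e_i$ these share the same $\rho^E$, and the structure isomorphisms $\mu,\mu'$ of the monoidal functor differ exactly by the symmetry $L_j \otimes L_i \isoarrow L_i \otimes L_j$, which is the identification built into the lower edge of (EX3); hence the square commutes.

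The main obstacle is (EX1), the only diagram mixing the functor maps, the isomorphism $\rho$, and multiplication by the section $\sigma_i$, and which is not a single parabolic axiom. I would verify it by a diagram chase. Put $v := \alpha - \alpha_i e_i$, so $v_i = 0$ and $\alpha + (r_i - \alpha_i)e_i = r_i e_i + v$; the assertion of (EX1) is that the top--right--bottom composite
\[
(\mathrm{id}_{L_i} \otimes E_{\alpha_i e_i}) \circ \rho^E_{r_i e_i, v} \circ E_{(r_i - \alpha_i)e_i}
\]
equals the left edge $\sigma_i$. First I use diagram (ii) of Definition~\ref{parabolic} with $u = r_i e_i$ and $b = \alpha_i e_i$ to slide $\mathrm{id}_{L_i}\otimes E_{\alpha_i e_i}$ across $\rho$, rewriting this composite as $\rho^E_{r_i e_i, \alpha} \circ E_{\alpha_i e_i} \circ E_{(r_i - \alpha_i)e_i}$. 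Functoriality of $E$ on $\ZZ^n$ collapses $E_{\alpha_i e_i} \circ E_{(r_i - \alpha_i)e_i}$ into the single arrow $E_{r_i e_i}\colon E_\alpha \to E_{r_i e_i + \alpha}$. Finally diagram (i) of Definition~\ref{parabolic} with $a = r_i e_i$ and $w = \alpha$ identifies $\rho^E_{r_i e_i, \alpha} \circ E_{r_i e_i}$ with multiplication by the section $\sigma_{r_i e_i}$, which is exactly $\sigma_i$. The only care needed is the matching of indices and the observation that the section $\sigma_{r_i e_i}$ of diagram (i) is precisely the section $s_i$ appearing in (EX1).
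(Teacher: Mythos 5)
Your proof is correct and takes essentially the same route as the paper's: the data come from restriction, (EX2) and (EX3) are read off directly from parabolic axioms (ii) and (iii) (your explicit double application of (iii) with the symmetry $L_j\otimes L_i\cong L_i\otimes L_j$ just spells out what the paper leaves implicit), and your verification of (EX1) --- sliding $E_{\alpha_i e_i}$ across $\rho$ via axiom (ii), collapsing $E_{\alpha_i e_i}\circ E_{(r_i-\alpha_i)e_i}$ to $E_{r_i e_i}$ by functoriality, and concluding with axiom (i) --- is exactly the paper's combination of axioms (ii) and (i), merely packaged without the detour through $\rho^{-1}$.
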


\begin{proof}
Note that the restricted functor has all the required data for an extendable pair by restricting
the collection $\rho_{\alpha, \beta}$. We need to check that the axioms of an extendable pair are
satisfied. 

(\ref{ex1}) We have that the composition
\[
E_{\alpha+(r_i-\alpha_i)e_i} \stackrel{\rho}{\rightarrow} E_{\alpha-\alpha_ie_i}\otimes L_i 
\rightarrow E_\alpha\otimes L_i\stackrel{\rho^{-1}}{\rightarrow} E_{\alpha+r_i e_i}
\] 
is just the morphism $E_{+\alpha_i e_i}$ using axiom (ii) of parabolic sheaf. Precomposing
with the map 
\[
E_{+(r_i-\alpha_i)e_i}: E_\alpha \rightarrow E_{\alpha +(r_i-\alpha_i)e_i}
\]
gives the morphism $E_{+r_ie_i}$. The result now follows from axiom (i).

(\ref{ex2}) This follows directly from axiom (ii).

(\ref{ex3}) This follows directly from axiom (iii).
\end{proof}

\begin{proposition}
Let $X$ be a scheme and $L$ a symmetric monoidal functor as in Definition \ref{parabolic}. Given an extendable pair $(F,\rho)$ on $(X,L)$ we can extend it to a parabolic sheaf $(\hat{F},\rho)$ on $X,L$ and the extension is unique up to a canonical isomorphism. A coherent extendable pair extends to a coherent parabolic sheaf. 
\end{proposition}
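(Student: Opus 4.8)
The plan is to exhibit the extension explicitly and then check the axioms, the point being that a parabolic sheaf is forced, outside a fundamental domain, by its restriction there together with the pseudo-period isomorphisms. Writing $\ZZ^n/\vr\ZZ^n\cong\prod_{i=1}^n\ZZ/r_i\ZZ$, I decompose each $v\in\ZZ^n$ uniquely as $v=u(v)+w(v)$ with $u(v)\in\vr\ZZ^n$ and $w(v)_i=v_i\bmod r_i\in\{0,1,\ldots,r_i-1\}$, so $u(v)_i=r_i\lfloor v_i/r_i\rfloor$. Since each $L_i$ is invertible, $L$ extends to a functor $\vr\ZZ^n\to\Div X$, and I define on objects
\[
\hat F_v=L_{u(v)}\otimes_{\sO_X}F_{w(v)}.
\]
On the block $\{w:0\le w_i\le r_i-1\}$ this agrees with $F$, and the top faces $\alpha_i=r_i$ of $\vr I^n$ are recovered from the bottom faces through the prescribed $\rho$, so no data is lost.

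Next I install the functor structure. By the evident $\ZZ^n$-analogue of Lemma \ref{l:data} it suffices to give elementary arrows $\hat F_{+e_i}\colon\hat F_v\to\hat F_{v+e_i}$ and to check that every unit square commutes. If $w(v)_i<r_i-1$ then $u(v+e_i)=u(v)$, and I set $\hat F_{+e_i}=\mathrm{id}_{L_{u(v)}}\otimes F_{+e_i}$. If $w(v)_i=r_i-1$ then $u(v+e_i)=u(v)+r_ie_i$, so $L_{u(v+e_i)}\cong L_{u(v)}\otimes L_i$, and I define $\hat F_{+e_i}$ to be, after tensoring with $L_{u(v)}$, the composite
\[
F_{w(v)}\xrightarrow{\ F_{+e_i}\ }F_{w(v)+e_i}\xrightarrow{\ \rho\ }L_i\otimes F_{w(v)+e_i-r_ie_i},
\]
which is legitimate because $w(v)+e_i$ lies on the top face $\alpha_i=r_i$ where $\rho$ is defined. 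Verifying that each unit square $v\to v+e_i\to v+e_i+e_j$ agrees with $v\to v+e_j\to v+e_i+e_j$ is the main work, and I expect it to be the main obstacle. One splits into cases according to whether $w(v)_i$ and $w(v)_j$ sit on their respective top faces: when neither coordinate crosses a boundary the square is $F$ tensored with a line bundle and commutes by functoriality of $F$; when exactly one crosses it is precisely condition (EX \ref{ex2}); and when both cross it is condition (EX \ref{ex3}). This produces the functor $\hat F\colon\ZZ^n\to\mathfrak{QCoh} X$.

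I then supply the pseudo-period isomorphisms and check the four axioms of (\ref{parabolic}). For a single generator $u=r_ie_i$ the isomorphism $\rho^{\hat F}_{r_ie_i,v}\colon\hat F_{r_ie_i+v}\xrightarrow{\sim}L_i\otimes\hat F_v$ is read off directly from the definition of $\hat F$ on objects; for general $u\in\vr\ZZ^n$ I compose these, gluing with the symmetric monoidal structure $\mu$ of $L$, which simultaneously forces the cocycle axiom (iii). Axiom (iv) holds by construction, since $u(v)=0$ on the fundamental block; axiom (ii) is the global form of (EX \ref{ex2}); and axiom (i), identifying the functor arrow in a $\vr\ZZ^n$-direction with multiplication by the corresponding section, is obtained by bootstrapping (EX \ref{ex1}) along a path through the block and composing, the coherence of $\mu$ ensuring path-independence.

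Finally, restricting $\hat F$ to $\vr I^n$ returns $(F,\rho)$ by construction, which gives existence. For uniqueness, any parabolic sheaf $(E,\rho^E)$ with $E|_{\vr I^n}=F$ must satisfy $E_v\cong L_{u(v)}\otimes E_{w(v)}=L_{u(v)}\otimes F_{w(v)}$ through its own pseudo-period isomorphism, naturally in $v$ and compatibly with the arrows, so any two extensions are canonically isomorphic. Coherence is preserved since $\hat F_v=L_{u(v)}\otimes F_{w(v)}$ is the tensor of an invertible sheaf with a coherent sheaf, hence coherent whenever $(F,\rho)$ is a coherent extendable pair.
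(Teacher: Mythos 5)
Your proposal is correct and follows essentially the same route as the paper's own proof: the same explicit formula $\hat F_v = L_{u(v)}\otimes F_{w(v)}$ from the division $v_i = r_i u_i + q_i$, the same elementary arrows split into the cases $w(v)_i < r_i-1$ and $w(v)_i = r_i-1$, the same case analysis of the unit squares via functoriality, (EX2), and (EX3), and the same uniqueness and coherence arguments. The only cosmetic difference is that for the doubly-crossing square the paper expands the check into a $3\times 3$ grid in which (EX2) also enters alongside (EX3); your sketch compresses this but does not contradict it.
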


\begin{proof}
For $v \in \ZZ^n$ we need to define its extension $\hat{F}_v$. We can write
$v_i = r_i u_i + q_i $ with $0\le q_i < r_i$ and $u_i \in \ZZ$. As before we denote $L_u = \otimes_{i=1}^n L^{\otimes u_i} $ and $q = (q_1,\ldots, q_n)$.
Set $\hat{F}_v = L_u \otimes F_q$.

We need to construct maps $\hat{F}_{+e_i}:\hat{F}_v\rightarrow \hat{F}_{v+e_i}$.
If $q_i< r_i -1 $ then the map is obtained by tensoring the map
$F_{q_i} \rightarrow F_{q_i + e_i }$ with $L_u$.
 If $q_i =r_i-1$
then the map is defined by
\begin{center}
\begin{tikzpicture}
\matrix (m) [matrix of math nodes, row sep=3em, column sep = 3em]
{\hat{F}_v = L_u \otimes F_q &   &\hat{F}_{v+ e_i}=L_u \otimes L_i \otimes F_{q'}\\
& L_u \otimes F_{q+e_i} & \\};
\draw [->] (m-1-1) to node {$\hat{F}_{e_i}$} (m-1-3);
\draw [->] (m-1-1) to node[below, yshift=-0.11cm, xshift=-0.15cm] {$1 \otimes F_{\vec{e_i}}$} (m-2-2);
\draw[->] (m-2-2) to node[below, yshift=-0.11cm, xshift=0.05cm]{$1 \otimes \rho$} (m-1-3);
\end{tikzpicture}
\end{center}
where $q'_j = q_j$ for all $j \neq i$ and $q'_i = 0.$

In order to show that the construction above indeed produces a functor we need to show that
all diagrams of Lemma \ref{l:data} commute. If both $q_i<r_i-1$
and $q_j<r_j-1$ then this is straightforward.
Suppose that $q_i=r_i-1$
and $q_j<r_j-1$ then this follows from (EX\ref{ex2}). This leaves
the case $q_i=r_i-1$
and $q_j=r_j-1$. We have a diagram
\begin{center}
\begin{tikzpicture}
\matrix (m) [matrix of math nodes, row sep=3em, column sep = 1.5em]
{
L_u\otimes F_q & L_u\otimes  F_{q+e_i} & L_u\otimes L_i \otimes F_{q - q_i e_i} \\
L_u\otimes F_{q+e_j} & L_u\otimes F_{q+e_i+e_j} & L_u\otimes L_i \otimes F_{q - q_i e_i + e_j}\\
L_u\otimes L_j \otimes F_{q-q_je_j} & L_u\otimes L_j \otimes F_{q-q_je_j +e_i} &
L_u\otimes L_i \otimes L_j \otimes F_{q-q_ie_i-q_je_j}\\
};


\draw [->] (m-1-1) to (m-1-2);
\draw [->] (m-1-2) to (m-1-3);

\draw [->] (m-2-1) to (m-2-2);
\draw [->] (m-2-2) to (m-2-3);

\draw [->] (m-3-1) to (m-3-2);
\draw [->] (m-3-2) to (m-3-3);


\draw [->] (m-1-1) to (m-2-1);
\draw [->] (m-2-1) to (m-3-1);

\draw [->] (m-1-2) to (m-2-2);
\draw [->] (m-2-2) to (m-3-2);

\draw [->] (m-1-3) to (m-2-3);
\draw [->] (m-2-3) to (m-3-3);

\end{tikzpicture}
\end{center} 
The top left square commutes using the fact that $F $ is a functor.
The top right and bottom left squares commute using axiom (EX\ref{ex2}).
The bottom right square commutes using axiom (EX\ref{ex3}). So indeed $\hat{F}_{\bullet}$ is a functor.

Note that we have canonical isomorphisms $L_u\otimes L_v\cong L_{u+v}$ for 
$u,v\in \vr\ZZ$. These isomorphisms induce our pseudo-period isomorphisms.

 Finally we need to check the conditions (i) to (iv) of a parabolic sheaf.  
 
 \noindent
 Condition (i):  For $\vr \alpha, \vr \alpha' \in \vr \mathbb{N}^n $ the following diagram commutes

  \begin{center}
\begin{tikzpicture}
\matrix (m) [matrix of math nodes, row sep=3em, column sep = 2.5em]
{
\hat{F}_v & \hat{F}_{v+\vr \alpha} &  \hat{F}_{v + \vr \alpha +\vr \alpha'}  \\
  & L_{\alpha}\otimes \hat{F}_{v} & L_{\alpha}\otimes L_{\alpha'} \otimes \hat{F}_{v} \\
};

\draw [->] (m-1-1) to node {$\hat{F}_{+\vr \alpha} $} (m-1-2);
\draw [->] (m-1-2) to node {$\hat{F}_{+\vr \alpha'} $} (m-1-3);
\draw [->] (m-2-2) to (m-2-3);

\draw [->] (m-1-2) to  (m-2-2);
\draw [->] (m-1-3) to  (m-2-3);

\end{tikzpicture}
\end{center} 
 
 This follows by the definition of the functor $\hat{F}_{\bullet}$ and  the symmetric monoidal structure of $L$.
 
 This allows us to make the following reduction: in order to check axiom (i) it suffices to check that the following diagram commutes

  \begin{center}
\begin{tikzpicture}
\matrix (m) [matrix of math nodes, row sep=3em, column sep = 2.5em]
{
  \hat{F}_v &  \hat{F}_{v + r_ie_i }  \\
 & L_{i}\otimes \hat{F}_{v} \\
};


\draw [->] (m-1-1) to (m-1-2);


\draw [->] (m-1-2) to node {$\rho$} (m-2-2);
\draw [->] (m-1-1) to node[below]{$\sigma_i$} (m-2-2);

\end{tikzpicture}
\end{center} 
 
 And this follows directly  from (EX\ref{ex1}).
 
 \noindent
 Condition (ii): Once again we reduce to showing that 

  \begin{center}
\begin{tikzpicture}
\matrix (m) [matrix of math nodes, row sep=3em, column sep = 2.5em]
{
  \hat{F}_{v+r_ie_i } &  L_{i} \otimes  \hat{F}_{v}  \\
   \hat{F}_{v+ b + r_ie_i} & L_{i}  \otimes \hat{F}_{v +b} \\
};


\draw [->] (m-1-1) to node{} (m-1-2);
\draw [->] (m-2-1) to (m-2-2);

\draw [->] (m-1-2) to node{$L_i \otimes \hat{F}_{+b}$}  (m-2-2);
\draw [->] (m-1-1) to node{$\hat{F}_{+b}$} (m-2-1);

\end{tikzpicture}
\end{center} 
commutes. If we write $v = \vr u + q $  then this diagram will become

 \begin{center}
\begin{tikzpicture}
\matrix (m) [matrix of math nodes, row sep=3em, column sep = 2.5em]
{
 L_{u+e_i} \otimes F_{q} &  L_{i} \otimes (L_u \otimes  F_q)  \\
  L_{u+e_i} \otimes  \hat{F}_{q+ b} & L_{i}  \otimes (L_u \otimes \hat{F}_{q +b}) \\
};


\draw [->] (m-1-1) to node{} (m-1-2);
\draw [->] (m-2-1) to (m-2-2);

\draw [->] (m-1-2) to node{$L_i \otimes L_u \otimes \hat{F}_{+b}$}  (m-2-2);
\draw [->] (m-1-1) to node{$ L_{u+e_i} \otimes \hat{F}_{+b}$} (m-2-1);

\end{tikzpicture}
\end{center} 

  We can use the symmetric monoidal structure of $L$ to show that this diagram indeed commutes.

\noindent
 Condition (iii): We reduce to showing the  commutativity of the following diagram

  \begin{center}
\begin{tikzpicture}
\matrix (m) [matrix of math nodes, row sep=3em, column sep = 2.5em]
{
  \hat{F}_{v+r_ie_i + r_je_j} &  L_{i} \otimes  \hat{F}_{v+ r_je_j}  \\
  L_{j} \otimes  \hat{F}_{v+ r_ie_i} & L_{i}\otimes L_j \otimes \hat{F}_{v} \\
};


\draw [->] (m-1-1) to node{} (m-1-2);
\draw [->] (m-2-1) to (m-2-2);

\draw [->] (m-1-2) to node {} (m-2-2);
\draw [->] (m-1-1) to node[below]{} (m-2-1);

\end{tikzpicture}
\end{center} 
  which  follows from the monoidal structure of $L$.

\noindent
 Condition (iv) is by definition. 

  Finally, let  $E_{\bullet}$ be another extension  of $F_{\bullet}. $ Again we can again write $v_i =r_i u_i + q_i $ with $0\le q_i < r_i$ and $u_i \in \ZZ.$ By pseudo-periodicity, $E_v \simeq L(u) \otimes E_q, $ and  $F_q = E_q $ because  $E_{\bullet}$ is an extension.  So, $E_v \cong \hat{F}_v$ for any $v \in \ZZ^n.$ 

 It is clear from the construction that finitely generated condition is preserved under extension. 
\end{proof}

\begin{corollary} 
\label{extension}
Let $X$ be a scheme and $L$ a symmetric monoidal functor as in Definition \ref{parabolic}. The category of parabolic sheaves  (coherent parabolic sheaves) on $(X,L)$ is equivalent to the category of extendable pairs (resp. coherent extendable pairs ) on $(X,L)$. 
\end{corollary}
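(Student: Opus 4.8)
The plan is to assemble the two preceding propositions into a pair of mutually quasi-inverse functors. Write $R$ for the restriction assignment sending a parabolic sheaf $(E,\rho)$ to the extendable pair $(E|_{\vr I^n},\rho)$ of the first proposition, and write $\hat{(-)}$ for the extension assignment $(F,\rho)\mapsto(\hat F,\rho)$ of the second proposition, which goes the other way. The same argument will handle the coherent case, since restriction preserves coherence trivially and extension preserves it by the final sentence of the second proposition.

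First I would promote both assignments to functors. Restriction along the inclusion $\vr I^n\hookrightarrow\ZZ^n$ is manifestly functorial: a morphism of parabolic sheaves is a natural transformation of the underlying $\ZZ^n$-indexed functors compatible with the pseudo-period isomorphisms, and restricting such a transformation to the subposet $\vr I^n$, together with its $\rho$-data on the relevant boundary indices, yields a morphism of extendable pairs in a way that respects identities and composition. For the extension functor, given a morphism $\phi:(F,\rho)\to(F',\rho')$ of extendable pairs, I would set $\hat\phi_v=\mathrm{id}_{L_u}\otimes\phi_q$ under the identifications $\hat F_v=L_u\otimes F_q$ and $\hat F'_v=L_u\otimes F'_q$ of the construction, where $v_i=r_iu_i+q_i$ with $0\le q_i<r_i$. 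Compatibility of $\hat\phi$ with the structure arrows $\hat F_{+e_i}$ is checked on the two cases $q_i<r_i-1$ and $q_i=r_i-1$ exactly as in the construction of $\hat F$, using that $\phi$ commutes with $F_{+e_i}$ and with $\rho$, while compatibility with the pseudo-period isomorphisms is immediate from the formula; functoriality of $\hat{(-)}$ then follows since it is given componentwise by tensoring with identities.

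Next I would produce the two natural isomorphisms. For $R\circ\hat{(-)}\cong\mathrm{id}$, note that for $v\in\vr I^n$ the construction gives $\hat F_v=F_v$ whenever every $v_i<r_i$, while on the boundary faces $v_i=r_i$ the object $\hat F_v$ is identified with $L_i\otimes F_{v-r_ie_i}$ through $\rho$; these identifications are exactly the isomorphism data of the extendable pair, so $R(\hat F)$ is canonically isomorphic to $(F,\rho)$, naturally in $(F,\rho)$. For $\hat{(-)}\circ R\cong\mathrm{id}$, the key observation is that a parabolic sheaf $E$ is itself an extension of its restriction $R(E)$; the uniqueness clause of the second proposition --- proved there by writing $v_i=r_iu_i+q_i$ and using pseudo-periodicity to produce a canonical isomorphism $E_v\cong L_u\otimes E_q=\widehat{R(E)}_v$ --- therefore yields a canonical isomorphism $\widehat{R(E)}\cong E$. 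Naturality holds because these comparison isomorphisms are built solely from the pseudo-period isomorphisms, with which every morphism of parabolic sheaves commutes.

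The main obstacle will be the bookkeeping in verifying that the second comparison $\widehat{R(E)}\cong E$ is genuinely a morphism of parabolic sheaves and is natural in $E$: one must check that the isomorphism assembled pointwise from the $\rho$'s is compatible not only with the pseudo-period isomorphisms but with all the structure arrows $E_{+e_i}$ at once, across the fundamental domain and its translates. This is routine but notationally heavy, and reduces --- via the same case analysis $q_i<r_i-1$ versus $q_i=r_i-1$ used in building $\hat F$ --- to the coherence of the symmetric monoidal structure on $L$ together with axioms (EX\ref{ex1})--(EX\ref{ex3}). No separate argument is needed for the coherent statement, as both functors and both isomorphisms preserve coherence.
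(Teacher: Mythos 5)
Your proposal is correct and takes essentially the same route as the paper: the paper's proof likewise pairs the truncation functor $(E,\rho)\mapsto E|_{\vr I^n}$ with the extension functor $F_\bullet\mapsto\hat{F}_\bullet$ from the preceding proposition and asserts they are mutually inverse and preserve coherence. Your write-up simply supplies the functoriality-on-morphisms and naturality verifications (including the boundary identifications via $\rho$ and the appeal to the uniqueness clause) that the paper compresses into ``it is easy to see.''
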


\begin{proof}
There is a pair of functors between these categories. The truncation functor sends a parabolic sheaf $(E, \rho)$ to an extendable pair by forgetting all $E_v$ when $v \notin \vr I^n.$ And the extension functor from extendable pairs to parabolic sheaves was defined in the previous Proposition on objects by the rule $F_{\bullet} \mapsto \hat{F}_{\bullet}. $ It is easy to see that these functors are mutually inverse and preserve finitely-generation condition. 
\end{proof}

\begin{remark}
	\label{CohExt}
 Let $X$ be a scheme and $L$ a symmetric monoidal functor as in Definition \ref{parabolic}. We will denote the  category of coherent extendable pairs on $(X,L)$  by $\mathcal{EP}(X,L,\vr).$ When $X$ is locally noetherian this category is abelian.
\end{remark}

\subsection{The localization sequence}
\label{s:loc-seq}

 In this subsection we will localize the category of finitely-generated extendable pairs so that it will be glued from  simpler parts. 
 
 For this section $X$ is a locally noetherian scheme and $L$ a symmetric monoidal functor as in Definition \ref{parabolic}. T
 
First let us consider the functor $\pi_*^{L,\vr}: \mathcal{EP}(X,L,\vr) \rightarrow \mathfrak{Coh}X, $ given by $F_{\bullet} \mapsto F_{0} $ on objects. It is  an exact functor because exact sequences in diagram categories are defined point-wise. 
  \begin{lemma}
  \label{epsil}
   The functor $\pi_*^{L,\vr}$ has  a left adjoint denoted $\pi^*_{L,\vr}$ and there is a natural isomorphism 
$\pi_*^{L,\vr}   \circ  \pi^*_{L,\vr} \simeq 1$.
  \end{lemma}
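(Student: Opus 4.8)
The plan is to construct the left adjoint $\pi^*_{L,\vr}$ explicitly and then verify the adjunction together with the identity $\pi_*^{L,\vr}\circ\pi^*_{L,\vr}\simeq 1$. Given a coherent sheaf $E$ on $X$, I would define an extendable pair $\pi^*_{L,\vr}(E)$ by placing $E$ at the origin and propagating it upward as freely as the pseudo-period data allow. Concretely, for $\alpha\in\vr I^n$ I would set $\pi^*_{L,\vr}(E)_\alpha = L_{(\alpha-\beta)/\vr}\otimes E$ where $\beta$ records the coordinates that have already wrapped around; more carefully, since each coordinate $\alpha_i$ ranges in $[0,r_i]$ and the pseudo-period isomorphism only acts when $\alpha_i=r_i$, the value at a general $\alpha$ should be $E$ itself in the interior, with the structure maps $F_{+e_i}$ being identities, and the required isomorphisms $\rho_{\alpha,\alpha-r_ie_i}$ at the boundary faces $\alpha_i=r_i$ being the canonical identifications $L_i\otimes(\text{value at }\alpha-r_ie_i)\cong(\text{value at }\alpha)$. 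This is the extendable-pair analogue of the functor $\pi^*$ built in the proposition preceding Theorem \ref{standard}, so I would model the construction on that one.

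The key steps, in order, are as follows. First I would give the object-level definition of $\pi^*_{L,\vr}(E)$ and check that the three axioms (EX\ref{ex1}), (EX\ref{ex2}), (EX\ref{ex3}) hold; since all interior transition maps are identities and the boundary $\rho$'s are the tautological ones coming from the monoidal structure of $L$, each axiom reduces to a coherence diagram for the symmetric monoidal functor $L$, exactly as in the verification of conditions (i)--(iv) in the preceding extension proposition. Second, I would produce the adjunction bijection $\Hom_{\mathcal{EP}}(\pi^*_{L,\vr}(E),G)\cong\Hom_{\coh X}(E,\pi_*^{L,\vr}(G))=\Hom_{\coh X}(E,G_0)$. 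In one direction, restrict a map of extendable pairs to its degree-$0$ component; in the other, a map $E\to G_0$ extends uniquely over $\vr I^n$ because $\pi^*_{L,\vr}(E)$ is freely generated from its value at $0$, so the images of all higher components are forced by functoriality and the pseudo-period compatibility of $G$. Third, the identity $\pi_*^{L,\vr}\circ\pi^*_{L,\vr}\simeq 1$ is immediate from the construction since $\pi^*_{L,\vr}(E)_0=E$.

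The main obstacle I anticipate is the well-definedness and naturality of the adjunction isomorphism, specifically showing that a morphism $E\to G_0$ genuinely extends to a \emph{unique} morphism of extendable pairs. The interior components are determined by commuting with the identity transition maps, but at the boundary faces $\alpha_i=r_i$ one must check that the forced component is compatible with $G$'s pseudo-period isomorphism $\rho^G$; this compatibility is precisely what axioms (EX\ref{ex1})--(EX\ref{ex3}) for $G$ guarantee, and verifying it amounts to chasing the same boundary squares that appear in the definition of an extendable pair. A secondary subtlety is coherence: the extension is unique only up to the canonical isomorphisms $L_u\otimes L_v\cong L_{u+v}$, so I would invoke the symmetric monoidal structure of $L$ to pin down the natural isomorphism $\pi_*^{L,\vr}\circ\pi^*_{L,\vr}\simeq 1$ canonically rather than merely as a pointwise isomorphism. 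Throughout, I would lean on the pattern already established for the functor $\pi$ in Theorem \ref{standard}, since $\pi_*^{L,\vr}$ is the extendable-pair incarnation of that same restriction-to-a-face functor.
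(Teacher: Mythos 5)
Your overall strategy is the paper's own: define $\pi^*_{L,\vr}(E)$ pointwise as $E$ twisted by the appropriate $L_i$ on the faces where $u_i=r_i$, take the pseudo-period isomorphisms to be the tautological identifications, obtain the adjunction by restricting a morphism of pairs to its component at $0$, and read off $\pi_*^{L,\vr}\circ\pi^*_{L,\vr}\simeq 1$ from $\pi^*_{L,\vr}(E)_0=E$. But there is one genuine gap, and it sits exactly where the construction must interact with the sections $s_i$. You assert that the structure maps $F_{+e_i}$ are identities and that each axiom then ``reduces to a coherence diagram for the symmetric monoidal functor $L$.'' This cannot be right: the transition map from a vertex with $u_i=r_i-1$ into the face $u_i=r_i$ has source $E$ (up to twist) and target $L_i\otimes E$ (up to twist), so it cannot be an identity, and your proposal never says what it is. Moreover, axiom (EX\ref{ex1}) is not a coherence diagram for $L$: it demands that the full circuit in the $i$-th direction equal multiplication by the section $s_i$. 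With identity interior maps and tautological $\rho$'s, (EX\ref{ex1}) pins the missing boundary map down uniquely --- it must be $\sigma_i$, multiplication by $s_i$. This is precisely what the paper does: it sets $(\pi^*(F))_u=(\otimes_{i=1}^n L_i^{\epsilon_i(u)})\otimes F$ with transition maps the identity for $u_i\in[0,r_i-2]$ and $\sigma_i$ at $u_i=r_i-1$, and $\rho$ the identity. Your own analogy undercuts the coherence claim: in the extension proposition you cite, condition (i) of a parabolic sheaf is verified using (EX\ref{ex1}) and involves $\sigma_i$, not monoidal coherence alone. (Your first formula $L_{(\alpha-\beta)/\vr}\otimes E$ is also garbled, though the subsequent clarification lands on the right object.)

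Once the boundary transition maps are defined as $\sigma_i$, the rest of your argument goes through and matches the paper: a morphism of extendable pairs out of $\pi^*_{L,\vr}(E)$ is determined by its $0$-component, since the interior transitions of the source are identities and the components on the faces $u_i=r_i$ are forced by compatibility with $\rho^G$ --- the ``boundary uniqueness'' issue you flag is resolved exactly by this $\rho$-compatibility, which is part of the definition of a morphism of pairs, so no separate chase is needed beyond the paper's observation that injectivity is obvious and surjectivity follows from the commuting squares. The identity $\pi_*^{L,\vr}\circ\pi^*_{L,\vr}\simeq 1$ is then immediate, as you say.
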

\begin{proof}
In what follows, we will omit,  the superscripts (resp. subscripts) $L$ and  $\vr$ in the notation for the appropriate functors. For any $0 \leq i \leq n $ consider functions $\epsilon_i: \vr I \rightarrow \{0,1\}, $ defined by $\epsilon_i (u) = 1 $ if $u_i = r_i$ and zero otherwise. We define the  functor  $\pi^*$ on a sheaf $F \in \mathfrak{Coh}X$ by the rule: 
$$(\pi^*(F))_{u} = (\otimes_{i = 1}^n L_i^{\epsilon_i(u)})\otimes F  . $$
 This forms a functor via the maps
 
 $$(\pi^*(F))_{u} \rightarrow (\pi^*(F))_{u +e_i} = \begin{cases}
\text{identity} & \text{ if } u_i \in [0,r_i-2] \\
\sigma_i  & \text{ if } u_i = r_{i}-1, 
\end{cases}$$
where $\sigma_i$ is the multiplication by the section $s_i$. 

Define $\rho$ to be identity map. It is easy to see that all axioms of  extendable pair are satisfied. 

   Now let's take a coherent sheaf $F$ and an extendable pair $E_{\bullet}$ and consider a map 
   
   $$ \text{Hom}_{\mathfrak{Coh}X}(F, \pi_*E)  \rightarrow \text{Hom}_{\mathcal{EP}} (\pi^*F, E)   $$
 given by sending $\phi \in \text{Hom}_{\mathfrak{Coh}X}(F, \pi_*E)$ to precomposition of the structure maps of the extendable pair $E$ with $\phi.$  It's obviously an injection. Surjectivity will follow from commutativity of the squares in $\text{Hom}_{\mathcal{EP}} (\pi^*F, E) $ and because all structure maps in $ \pi^*F$ are identity.

\end{proof}

\begin{proposition} Let $X$ be a locally noetherian scheme.
The functor $ \pi_*^{L,\vr} :  \mathcal{EP}(X,L,\vr) \rightarrow \mathfrak{Coh}X $ satisfies the hypothesis of Theorem \ref{t:swanQuot}. 
\end {proposition}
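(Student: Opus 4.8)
The plan is to verify the two conditions of Theorem \ref{t:swanQuot} for the exact functor $\pi_*^{L,\vr}$, reusing the left adjoint $\pi^*_{L,\vr}$ constructed in Lemma \ref{epsil}. This is exactly parallel to the proof of part (1) of Theorem \ref{standard}, where the same formal mechanism (an exact functor with a left-adjoint section) was shown to satisfy these hypotheses. First I would recall that exactness was already established in the discussion preceding Lemma \ref{epsil}, so only conditions (1) and (2) of \ref{t:swanQuot} remain.

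For the essential surjectivity condition (1), I would invoke the natural isomorphism $\pi_*^{L,\vr}\circ \pi^*_{L,\vr}\simeq 1$ from Lemma \ref{epsil}: given any coherent sheaf $y\in \coh X$, the extendable pair $x=\pi^*_{L,\vr}(y)$ satisfies $\pi_*^{L,\vr}(x)\cong y$. This is immediate and requires no computation.

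For condition (2), I would proceed exactly as in the proof of \ref{standard}(1). Given a morphism $f:\pi_*^{L,\vr}(F)\to \pi_*^{L,\vr}(F')$ of coherent sheaves, the adjunction $\Hom(\pi^*_{L,\vr}(\pi_*^{L,\vr} F), F')\cong \Hom(\pi_*^{L,\vr} F, \pi_*^{L,\vr} F')$ lets me transport $f$ to a morphism $g:\pi^*_{L,\vr}\pi_*^{L,\vr}(F)\to F'$ in $\mathcal{EP}(X,L,\vr)$. Setting $x''=\pi^*_{L,\vr}\pi_*^{L,\vr}(F)$, together with the counit-type map $h:x''\to F$ (the adjunct of $\mathrm{id}_{\pi_*^{L,\vr}F}$), I obtain the required pair of arrows. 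Applying $\pi_*^{L,\vr}$ and using $\pi_*^{L,\vr}\circ\pi^*_{L,\vr}\simeq 1$ shows that $\pi_*^{L,\vr}(h)$ is an isomorphism and that the relevant triangle commutes, since the top map becomes identified with $f$ under this isomorphism.

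The main point to be careful about — rather than a genuine obstacle — is checking that the triangle of condition (2) commutes on the nose after applying $\pi_*^{L,\vr}$; this is a formal consequence of the triangle identities for the adjunction $(\pi^*_{L,\vr}, \pi_*^{L,\vr})$ combined with $\pi_*^{L,\vr}\circ\pi^*_{L,\vr}\simeq 1$, and involves no geometry specific to root stacks. Since all the structural input (exactness, the adjunction, and the splitting isomorphism) is already in place, the argument is purely diagrammatic and mirrors \ref{standard}(1) verbatim.
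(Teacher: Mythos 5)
Your proposal is correct and matches the paper's own argument: the paper likewise notes that only condition (2) needs checking, takes $x''=\pi^*_{L,\vr}\pi_*^{L,\vr}(F)$ with the counit and the adjunct of $f$, and applies $\pi_*^{L,\vr}$ together with $\pi_*^{L,\vr}\circ\pi^*_{L,\vr}\simeq 1$ from Lemma \ref{epsil}, exactly mirroring the proof of Theorem \ref{standard}(1). Your write-up is in fact slightly more explicit than the paper's (which leaves the triangle identities implicit), but it is the same proof.
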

\begin{proof}
 The only thing which is not completely obvious is the second condition. Consider two extendable pairs $E_{\bullet}$ and $F_{\bullet}.$ Suppose that we have a
 morphism $\pi_* (E_{\bullet}) \rightarrow \pi_*(F_{\bullet})$. By adjointness we obtain a diagram
 \begin{center}
 \begin{tikzpicture}
 \node (T) at (0,0) {$\pi^* \pi_* (E_{\bullet})$ } ;
 \node (B) at (0,-1.5) {$E_{\bullet}$} ;
 \node (BR) at (2,-1.5) {$F_{\bullet}$} ;
 \draw[->] (T) -- (B);
 \draw[->] (T) -- (BR);
 \end{tikzpicture}
 \end{center}
 Applying $\pi$ to this picture shows that the second condition holds.
\end{proof}
 
 Using  Theorem \ref{t:swanQuot} we obtain the following
 
 \begin{corollary}
 \label{1localization} Let $X$ be a locally noetherian scheme.
 There is an equivalence of abelian categories: 
    $$  \mathcal{EP}(X,L,\vec{r}) \Big/ {\bker( \pi_*^{L,\vec{r}})} \rightarrow  \mathfrak{Coh}X,$$
   \end{corollary}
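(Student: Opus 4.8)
The plan is to invoke Theorem \ref{t:swanQuot} directly for the exact functor $\pi_*^{L,\vr}: \mathcal{EP}(X,L,\vr) \to \coh X$. Exactness was already recorded when this functor was introduced, since kernels and cokernels in the diagram category $\mathcal{EP}(X,L,\vr)$ are computed pointwise and $\pi_*^{L,\vr}$ merely evaluates at $0$. Thus the only remaining task is to confirm the two hypotheses of that theorem and then read off the conclusion with the roles $\bA = \mathcal{EP}(X,L,\vr)$, $\bB = \coh X$, and $S = \bker(\pi_*^{L,\vr})$.

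For the first hypothesis (surjectivity up to isomorphism on objects), I would appeal to Lemma \ref{epsil}: given any coherent sheaf $G$ on $X$, the extendable pair $\pi^*_{L,\vr}(G)$ satisfies $\pi_*^{L,\vr}(\pi^*_{L,\vr}(G)) \cong G$, precisely because the natural isomorphism $\pi_*^{L,\vr} \circ \pi^*_{L,\vr} \simeq 1$ is supplied there. Hence every object of $\coh X$ lies, up to isomorphism, in the image of $\pi_*^{L,\vr}$.

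The second hypothesis — that every morphism $f: \pi_*^{L,\vr}(E) \to \pi_*^{L,\vr}(F)$ admits the required factorization through some $E''$ with $E'' \to E$ becoming an isomorphism under $\pi_*^{L,\vr}$ — is exactly the content of the Proposition immediately preceding this corollary. There one takes $E'' = \pi^*_{L,\vr}\pi_*^{L,\vr}(E)$: the unit of the adjunction provides the arrow $E'' \to E$, the adjoint transpose of $f$ provides $E'' \to F$, and applying $\pi_*^{L,\vr}$ together with the identity $\pi_*^{L,\vr} \circ \pi^*_{L,\vr} \simeq 1$ shows that $E'' \to E$ is carried to an isomorphism and that the relevant triangle commutes.

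With both conditions verified and exactness already in hand, Theorem \ref{t:swanQuot} yields the equivalence $\mathcal{EP}(X,L,\vr)/\bker(\pi_*^{L,\vr}) \cong \coh X$, which is the assertion. I expect essentially no obstacle here: the corollary is a purely formal consequence of the adjunction of Lemma \ref{epsil} and the verification in the preceding Proposition, with all the genuine work already done. The only point demanding any care is bookkeeping — matching the direction of the comparison functor so that it agrees with the one manufactured by Theorem \ref{t:swanQuot}.
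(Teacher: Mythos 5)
Your proposal is correct and follows exactly the paper's route: the authors likewise verify the two hypotheses of Theorem \ref{t:swanQuot} for $\pi_*^{L,\vr}$ via the adjunction of Lemma \ref{epsil} (objects hit up to isomorphism because $\pi_*^{L,\vr}\circ\pi^*_{L,\vr}\simeq 1$; morphisms factored through $\pi^*_{L,\vr}\pi_*^{L,\vr}(E_\bullet)$ using the adjoint transpose) and then read off the equivalence. The only nitpick is terminological: since $\pi^*_{L,\vr}$ is the \emph{left} adjoint, the arrow $\pi^*_{L,\vr}\pi_*^{L,\vr}(E_\bullet)\rightarrow E_\bullet$ is the counit rather than the unit, but this does not affect the argument.
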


     In the rest of this subsection we would like to give a description of the category ${\bker(\pi_*^{L,\vec{r}})}.$  Let us study the objects first. Let $F_{\bullet}$ be an extendable pair. Then $\pi_* (F_{\bullet}) = F_0,$ and if $F_{\bullet} \in {\bker(\pi_*^{L,\vec{r}})} $ then $F_0 \cong 0.$ The pseudo-period isomorphism imply in turn that  $F_u \cong 0$ if  all $u_i \in \{ 0, r_i \}.$
       
    Let us consider the sheaves $F_u$ such that for any $j \neq i $ $u_j \in \{ 0, r_j \} $ (we can imagine them as sheaves on the edges of the cubical diagram $F_{\bullet} \in \text{Func}(\vr I^n, \bA) $).  
    Using the axiom (EX \ref{ex1})  we get that the multiplication by section map
     $s_i: F_u \rightarrow L_i \otimes F_u$  must factor through $F_{u+ (r_i-u_i)e_i}$ which is a zero sheaf if $F_{\bullet} \in {\bker(\pi_*^{L,\vec{r}})}. $ This implies  the following:

   \begin{lemma}
   \label{support}
 If $F_{\bullet} \in {\bker(\pi_*^{L,\vec{r}})} $ and $u \in \vr I^n $ is such that $\forall j \neq i $  $ u_j \in \{ 0, r_j \}$ then   $\supp(F_u) $ is contained in the divisor of zeroes of the section
 $s_i\in H^0(L_i)$.
 
 If $s_i = 0$ for some $i$, we will say that $\div(s_i) = X$.
   \end{lemma}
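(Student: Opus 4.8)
The plan is to convert the vanishing of the section-multiplication morphism into the asserted support condition, in two stages. First I would make rigorous the claim already sketched in the discussion preceding the lemma: that the map $\sigma_i\colon F_u\to L_i\otimes F_u$ obtained by multiplying with the section $s_i$ is the zero morphism. Applying axiom (EX\ref{ex1}) with $\alpha=u$, the commutativity of that square says precisely that
\[
\sigma_i=(L_i\otimes F_{u_ie_i})\circ\rho\circ F_{(r_i-u_i)e_i},
\]
so $\sigma_i$ factors through the object $F_{u+(r_i-u_i)e_i}$. Since by hypothesis $u_j\in\{0,r_j\}$ for every $j\neq i$, while the $i$-th coordinate of $u+(r_i-u_i)e_i$ equals $u_i+(r_i-u_i)=r_i$, every coordinate of $u+(r_i-u_i)e_i$ lies in $\{0,r_j\}$. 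By the observation recorded just before the lemma --- namely that $F_0\cong 0$ together with the pseudo-period isomorphisms forces $F_v\cong 0$ whenever all $v_j\in\{0,r_j\}$ --- we get $F_{u+(r_i-u_i)e_i}\cong 0$, and therefore $\sigma_i=0$.

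Next I would pass from $\sigma_i=0$ to the support statement by a local computation on $X$. The section $s_i\in H^0(X,L_i)$ is a morphism $\sO_X\to L_i$, and $\sigma_i$ is exactly the result of tensoring this morphism with $F_u$. On the open locus $U=X\setminus\div(s_i)$ the section $s_i$ is non-vanishing, hence a local generator of $L_i$, so it restricts to an isomorphism $\sO_U\isoarrow L_i|_U$. Tensoring with $F_u|_U$ shows that the restriction $\sigma_i|_U\colon F_u|_U\to (L_i\otimes F_u)|_U$ is itself an isomorphism. But $\sigma_i=0$, so this isomorphism is simultaneously the zero map, which forces $F_u|_U=0$. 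Thus no point of $U$ lies in $\supp(F_u)$, i.e.\ $\supp(F_u)\subseteq\div(s_i)$. The degenerate convention $\div(s_i)=X$ when $s_i=0$ makes the conclusion vacuously true in that case, consistent with the fact that $\sigma_i=0$ automatically and no constraint on the support is then imposed.

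I do not anticipate a serious obstacle: the entire content is carried by the factorisation supplied by (EX\ref{ex1}) together with the elementary fact that a (coherent) sheaf on which multiplication by a local trivialising section vanishes must itself vanish. The only point requiring a little care is the bookkeeping of the coordinates of $u+(r_i-u_i)e_i$, to confirm they all fall in $\{0,r_j\}$, and the correct invocation of the pseudo-period isomorphisms to kill $F$ on those ``vertex'' indices of the cube; both are immediate from the hypotheses on $u$ and the fact that $F_\bullet\in\bker(\pi_*^{L,\vec{r}})$.
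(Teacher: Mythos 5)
Your proof is correct and takes essentially the same route as the paper: the paper's entire justification is the observation stated just before the lemma, namely that axiom (EX1) forces $\sigma_i$ to factor through $F_{u+(r_i-u_i)e_i}$, which vanishes because $F_0\cong 0$ and the pseudo-period isomorphisms kill $F$ at every vertex of the cube $\vr I^n$. Your second stage simply makes explicit the standard local fact (that $s_i$ trivialises $L_i$ away from $\div(s_i)$, so a sheaf on which multiplication by $s_i$ is zero must vanish there) which the paper leaves implicit.
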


    We will apply the localization method (Theorem \ref{t:swanQuot}), to this partial description of the
    kernel.   
    
    Let's  fix some notation.  Denote by 
   $$S_n(k) = \{T \subset \{1, \dots, n\}\mid |T|=k \}.$$
   We will often abuse notation and write $S(k)$ for $S_n(k)$ when it is clear from the
   context what $n$ is.
We will view each interval $[0,r_i]$ as a pointed set, pointed at $0$. It
follows that we have order preserving inclusions
\[
\iota_T : \prod_{i \in T} [0,r_i] \rightarrow \prod_{i=1}^n [0,r_i] := \vr I^n.
\]   
 
 Ignoring the pointed structure produces order preserving ($\le$) projection maps
 \[
 \pi_T : \vr I^n \rightarrow \prod_{i\in T} [0,r_i].
 \]  

\begin{definition}
 As we agreed above, $L: \vr\ZZ^n \rightarrow \mathfrak{Div}X$ is the symmetric monoidal functor as in Definition \ref{parabolic}.
  
    If $1 \le k \le n$   and   $T \in S(k)$ then we will define a symmetric monoidal functor $L_T:   \vr \ZZ^k \rightarrow  \mathfrak{Div}X $ as a composition:
  
  \begin{center}
\begin{tikzpicture}
\matrix (m) [matrix of math nodes, row sep=3em, column sep = 2.5em]
{
 \vr\ZZ^k &   \vr\ZZ^n   &  \mathfrak{Div}X \\
};

\draw [->] (m-1-1) to node {$ \iota_T $} (m-1-2);
\draw [->] (m-1-2) to node {$L $} (m-1-3);
\end{tikzpicture}
\end{center} 
 We will say that $L_T$ is obtained from $L$ by the \textit{restriction along} $\iota_T.$
\end{definition}

 Now for $T \in S(k)$ let's consider the functor 
 
 $$\iota_T^*  :  \mathcal{EP}(X,L,\vec{r})  \longrightarrow  \mathcal{EP}(X,L_T, \pi_T (\vr) )  
      $$
     which is the restriction of  an extendable pair $F_\bullet$ along the inclusion
     $\iota_T$.
The pseudo-period isomorphism is just obtained by restriction. 

\begin{definition}\label{d:face}
   For any $1 \le k \le n$ we define functors
 $$\face^k := \prod_{T \in S(k) } \iota_T^*  :   \mathcal{EP}(X,L,\vec{r})  \longrightarrow \prod_{T \in S(k) }  \mathcal{EP}(X,L_T, \pi_T (\vr) ) 
 $$
\end{definition}

\begin{definition}\label{d:ker}
\label{ker-k}
 For any $1 \le k \le n$  we denote by  $ \bker^k = \bker(\face^k) $. 
 
 Also denote $\bker^0 = \bker(\pi_*). $

 \end{definition}  
 
  \begin{lemma}
  \label{l:sup-k}
  For any $1 \le k \le n$, any $F_\bullet \in \bker^{k-1}$ and any $T\in S(k)$ we can consider   $(\iota_{T}^*(F_{\bullet}))_\bullet$ as an element of 
  $$ \func( \prod_{i \in T} [1,r_i - 1], \phantom{a} \mathfrak{Coh} (\bigcap_{i \in T}\div(s_i))) .$$  
  In other words, the images of these functors are supported on the indicated subschemes.
  As in Lemma \ref{support} we will say that if $s_i = 0$, then $\div(s_i) = X$.
 \end{lemma}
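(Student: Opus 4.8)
The plan is to prove two things about the restricted functor $(\iota_T^*(F_\bullet))_\bullet$: first, that its values on the boundary of the box $\prod_{i\in T}[0,r_i]$ vanish, so that all of its data is concentrated on the interior poset $\prod_{i\in T}[1,r_i-1]$; and second, that each interior value is annihilated by every section $s_i$ with $i\in T$, hence is a coherent sheaf on the scheme-theoretic intersection $\bigcap_{i\in T}\div(s_i)$. Throughout I would use that $F_\bullet\in\bker^{k-1}$ means exactly that $\iota_{T'}^*(F_\bullet)\cong 0$ for every $T'\in S(k-1)$, together with the elementary observation that if $u_i=0$ for some $i\in T$ then the point $\iota_T(u)$ lies in the image of $\iota_{T\setminus\{i\}}$, and $T\setminus\{i\}\in S(k-1)$.

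For the boundary vanishing I would fix $u\in\prod_{i\in T}[0,r_i]$ and an index $i\in T$. If $u_i=0$, then $\iota_T(u)$ lies in the image of $\iota_{T\setminus\{i\}}$, so $(\iota_T^*(F_\bullet))_u$ agrees with a value of $\iota_{T\setminus\{i\}}^*(F_\bullet)$, which is $0$ since $F_\bullet\in\bker^{k-1}$. If instead $u_i=r_i$, then the pseudo-period isomorphism gives $F_u\simeq L_i\otimes F_{u-r_ie_i}$, and $u-r_ie_i$ has $i$th coordinate $0$, so this reduces to the previous case and again $F_u\cong 0$. Consequently the only possibly nonzero values occur when $u_i\in[1,r_i-1]$ for all $i\in T$.

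For the annihilation statement I would fix an interior point $u$ and an index $i\in T$ and rerun the argument of Lemma \ref{support}. By axiom (EX\ref{ex1}) the multiplication map $\sigma_i\colon F_u\to L_i\otimes F_u$ factors through $F_{u+(r_i-u_i)e_i}$. Applying the pseudo-period isomorphism at the $i$th coordinate, now equal to $r_i$, rewrites this target as $L_i\otimes F_{u-u_ie_i}$; but $u-u_ie_i$ has $i$th coordinate $0$, hence lies in the face indexed by $T\setminus\{i\}\in S(k-1)$, so $F_{u-u_ie_i}\cong 0$. Thus $\sigma_i$ is the zero map, which forces $F_u$ to be annihilated by $s_i$ and so to be a coherent sheaf on $\div(s_i)$. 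Intersecting over all $i\in T$ exhibits $F_u$ as a coherent sheaf on $\bigcap_{i\in T}\div(s_i)$.

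Finally I would assemble these observations: restricting $\iota_T^*(F_\bullet)$ to $\prod_{i\in T}[1,r_i-1]$ loses no information by the boundary vanishing, the transition maps $F_{\,\cdot\,+e_j}$ are $\sO_X$-linear and hence automatically $\sO_{\bigcap_{i\in T}\div(s_i)}$-linear, and no pseudo-period data survives on the interior since no coordinate reaches $r_i$ there. This presents $(\iota_T^*(F_\bullet))_\bullet$ as an object of $\func(\prod_{i\in T}[1,r_i-1],\ \mathfrak{Coh}(\bigcap_{i\in T}\div(s_i)))$. The only real subtlety, and the step I would be most careful about, is the bookkeeping that sending a $T$-coordinate to $0$ genuinely lands one in a $(k-1)$-dimensional face; this is precisely what allows $\bker^{k-1}$ to drive both the boundary argument and the annihilation argument, everything else being a direct application of (EX\ref{ex1}) and pseudo-periodicity.
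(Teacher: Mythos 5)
Your proof is correct and follows essentially the same route as the paper's: vanishing on the boundary faces $u_i\in\{0,r_i\}$ via the $\bker^{k-1}$ condition and the pseudo-period isomorphism, then axiom (EX\ref{ex1}) to show multiplication by each $s_i$ ($i\in T$) factors through a zero sheaf, placing the interior values in $\mathfrak{Coh}(\bigcap_{i\in T}\div(s_i))$. You in fact spell out a couple of points the paper leaves implicit (the bookkeeping that $\iota_T(u)$ with $u_i=0$ lands in a $(k-1)$-face, and the annihilation rather than mere support statement), which is a welcome precision but not a different argument.
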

  
 \begin{proof}
If $k = 1$ then the result is proved in  Lemma \ref{support} and the observation before it.
 
 Let's take any $2 \le k \le n$  and an extendable pair $F_\bullet \in \bker^{k-1}.$

 If we consider an extendable pair $(\iota^*_{T}(F_{\bullet}))_\bullet \in  \mathcal{EP}(X,L_T, \pi_T(\vec{r})) $ then 
 for any $v \in  \prod_{i \in T} [0, r_i]$ we will have isomorphisms of sheaves:  $(\iota^*_{T}(F_{\bullet}))_v \cong 0,$ whenever $v_i = 0$ for some $i \in T$. Because of the pseudo-periodicity isomorphism we also have that $(\iota_{T}(F_{\bullet}))_v \cong 0,$ whenever $v_i = r_i$ for some $i \in T$. 
  
   The last step is an application of the axiom EX1 to the extendable pair $(\iota^*_{T}(F_{\bullet}))_\bullet.$ Because 
    $(\iota^*_{T}(F_{\bullet}))_v \cong 0$ if $v_i = r_i$ for some $i \in T$ that implies that for  any $w \in \prod_{i \in T} [1, r_i - 1] $   the multiplication of the sheaf $(\iota^*_{T}(F_{\bullet}))_w $ by the sections $s_i \in H^0(X, L_i)$ for all $i \in T$ must factor through zero. So the support of the sheaf $(\iota^*_{T}(F_{\bullet}))_w $  is contained in 
    $\cap_{i \in T}\div(s_i).$
 
\end{proof}

  \begin{lemma}
  If we restrict the domain of the functor $\face^k$ to the full subcategory $\bker^{k-1}$ for any $1 \leqslant k \leqslant n,$ then we will obtain functors:
  $$ \face^k \big |_{\bker^{k-1} } : \bker^{k-1}  \longrightarrow \prod_{T \in S(k) } \func(\prod_{i \in T} [1,r_i - 1], \phantom{a} \mathfrak{Coh}( \bigcap_{i \in T} \div(s_i))).
   $$ 
There is an equivalence of categories between $\bker^k$ and $\bker(\face^k\big |_{\bker^{k-1} })$.
  \end{lemma}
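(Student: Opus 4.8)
The plan is to separate the statement into two assertions and dispatch each: that $\face^k|_{\bker^{k-1}}$ genuinely takes values in the stated product of functor categories, and that its kernel agrees with $\bker^k$. The first is a repackaging of Lemma \ref{l:sup-k}, while the second will follow formally once I isolate the single combinatorial fact that $\bker^k\subseteq\bker^{k-1}$. To set this up I would first record an explicit description of the objects of $\bker^k$. Unwinding $\face^k=\prod_{T\in S(k)}\iota_T^*$, an extendable pair $F_\bullet$ lies in $\bker^k$ exactly when $\iota_T^*(F_\bullet)\cong 0$ for every $T\in S(k)$; since $(\iota_T^*F_\bullet)_w=F_{\iota_T(w)}$ and $\iota_T(w)$ runs over precisely the $v\in\vr I^n$ with $\supp(v)\subseteq T$, this reads
\[
\bker^k=\{F_\bullet : F_v\cong 0 \text{ whenever } |\supp(v)|\le k\},
\]
the case $k=0$ recovering $F_0\cong 0$, i.e. $\bker^0=\bker(\pi_*)$. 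From this characterisation the containment $\bker^k\subseteq\bker^{k-1}$ is immediate: given $v$ with $|\supp(v)|\le k-1$, one enlarges $\supp(v)$ to a set $T\in S(k)$ (possible since $k\le n$) and applies $\iota_T^*(F_\bullet)\cong 0$ to conclude $F_v\cong 0$.

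Next I would address the codomain of the restricted functor. For $F_\bullet\in\bker^{k-1}$ and $T\in S(k)$, Lemma \ref{l:sup-k} shows that $\iota_T^*(F_\bullet)$ vanishes on the boundary of $\prod_{i\in T}[0,r_i]$ (where some coordinate equals $0$ or $r_i$) and that its interior values are supported on $\bigcap_{i\in T}\div(s_i)$. Consequently the pseudo-period isomorphisms of $\iota_T^*(F_\bullet)$ relate only boundary objects, both of which vanish, and so impose no constraint in the interior; passing to the interior poset $\prod_{i\in T}[1,r_i-1]$ therefore loses no information. This is exactly the assertion that $\face^k|_{\bker^{k-1}}$ factors through $\prod_{T\in S(k)}\func(\prod_{i\in T}[1,r_i-1],\mathfrak{Coh}(\bigcap_{i\in T}\div(s_i)))$. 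Functoriality is automatic: a morphism of extendable pairs restricts to a morphism of interior functors, and a morphism between sheaves supported on $\bigcap_{i\in T}\div(s_i)$ is a morphism in $\mathfrak{Coh}(\bigcap_{i\in T}\div(s_i))$.

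Finally I would assemble the equivalence. By definition $\bker(\face^k|_{\bker^{k-1}})$ is the full subcategory of $\bker^{k-1}$ on those $F_\bullet$ whose image under $\face^k|_{\bker^{k-1}}$ is zero. For $F_\bullet\in\bker^{k-1}$, the interior functor indexed by $T$ is zero if and only if $\iota_T^*(F_\bullet)\cong 0$, since $\iota_T^*(F_\bullet)$ already vanishes on the boundary by Lemma \ref{l:sup-k}; hence this condition is precisely $F_\bullet\in\bker^k$. Combining with the containment of the first paragraph, $\bker(\face^k|_{\bker^{k-1}})$ and $\bker^k$ are full subcategories of $\mathcal{EP}(X,L,\vr)$ with the same objects, so they coincide and are in particular equivalent. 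I expect the only genuinely delicate point to be the codomain identification of the second paragraph—making precise, via Lemma \ref{l:sup-k}, that restriction to interior values is lossless and compatible with the notion of being a zero object—whereas the inclusion $\bker^k\subseteq\bker^{k-1}$ and the comparison of kernels are purely formal.
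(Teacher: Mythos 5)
Your proof is correct and follows the same route as the paper, which simply cites the preceding lemma (\ref{l:sup-k}) for the codomain identification and the containment $\bker^k\subseteq\bker^{k-1}$ for the kernel comparison; you have merely filled in the details the paper leaves implicit, in particular the explicit characterisation of $\bker^k$ as those $F_\bullet$ with $F_v\cong 0$ whenever $|\supp(v)|\le k$. No changes needed.
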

 
 \begin{proof}
 The first part follows directly from the Lemma before. The proof of the second part is straightforward and follows from the fact that $\bker^k$ is a full subcategory of $\bker^{k-1}.$ 
\end{proof}  
  
  \begin{remark}
  In order to apply the localization procedure to the category $\bker^{k-1}$ we need to show that the functor $ \face^k \big |_{\bker^{k-1} }$  has a left adjoint. The existence of a left adjoint follows from the special adjoint functor theorem. But for the purpose of splitting of the corresponding short exact sequence of $K$-groups (see the section \ref{ss:computation} for details) we need the unit of the adjunction to be the natural isomorphism. This doesn't follow from the abstract nonsense, so we need an explicit construction of a left adjoint functor. It is given in the proof of  the following theorem. 
  
  \end{remark}

  \begin{theorem}
\label{localization} Let $X$ be a locally noetherian scheme and consider a symmetric monoidal functor
$L:\vr\ZZ^n \rightarrow  \Div X$.
 \begin{enumerate} [(i)]
\item
For any $1 \le k \le n$ there is an exact functor 
$$\face^k \big |_{\bker^{k-1} } : \bker^{k-1}  \longrightarrow \prod_{T \in S(k) } \func(\prod_{i \in T} [1,r_i - 1], \phantom{a} \mathfrak{Coh}( \bigcap_{i \in T} \div(s_i))), $$
where $ \bker^k$ is a kernel of the functor $\face^k$   and $\bker^0 := \bker(\pi_*^{L, \vr}) $
\item
The functors  $\face^k\big |_{\bker^{k-1} }$ have left adjoints $D^k $ such that 
$$\face^k \big |_{\bker^{k-1} } \circ D^k \simeq 1$$. 
\item
 $\face^k \big |_{\bker^{k-1} }$  satisfies the condition of Theorem \ref{t:swanQuot}
\item
The functor 
$$\face^n\big |_{\bker^{n-1} }:  \bker^{n-1}  \longrightarrow \func(\prod_{i =1}^n [1, r_i-1], \phantom{a} \mathfrak{Coh}( \bigcap_{i =1}^n \div(s_i)))$$
 is an equivalence of categories.
\end{enumerate}
\end{theorem}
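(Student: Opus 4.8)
The plan is to establish (i) and (iii) by following the template already set by Theorem~\ref{standard} and Lemma~\ref{epsil}, reducing everything to the construction of an explicit left adjoint in (ii); parts (iii) and (iv) then fall out formally. For (i), the functor $\face^k$ is a product of the restriction functors $\iota_T^*$, each exact because kernels and cokernels in the diagram categories are computed pointwise. Lemma~\ref{l:sup-k} is exactly the statement that, after restricting the source to $\bker^{k-1}$, each $\iota_T^*(F_\bullet)$ lands in $\func(\prod_{i\in T}[1,r_i-1],\coh(\bigcap_{i\in T}\div(s_i)))$: the boundary values vanish by the face-vanishing hypothesis together with the pseudo-period isomorphisms, and the interior values are supported on $\bigcap_{i\in T}\div(s_i)$ by (EX\ref{ex1}). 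Hence the functor of (i) is well defined and exact.

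The heart of the argument is (ii). I would build $D^k$ as the direct sum $D^k=\bigoplus_{T\in S(k)}D_T$ of one adjoint per face, where each $D_T$ directly generalises the functor $\pi^*$ of Lemma~\ref{epsil}. Recalling the functions $\epsilon_i$ of that lemma, for $G\in\func(\prod_{i\in T}[1,r_i-1],\coh(\bigcap_{i\in T}\div(s_i)))$ I would set
\[
D_T(G)_u=
\begin{cases}
\left(\bigotimes_{i\notin T}L_i^{\epsilon_i(u)}\right)\otimes G_{\pi_T(u)} & \text{if } u_i\in[1,r_i-1]\ \text{for all } i\in T,\\
0 & \text{otherwise,}
\end{cases}
\]
with transition maps given by $G_{+e_i}$ along the face directions $i\in T$ (and the zero map where these exit the interior), and, along the remaining directions $i\notin T$, by the identity when $u_i\le r_i-2$ and by multiplication $\sigma_i$ by the section $s_i$ when $u_i=r_i-1$, exactly as in Lemma~\ref{epsil}; the pseudo-period isomorphisms are the evident ones induced by the symmetric monoidal structure of $L$. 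One checks that $D_T(G)$ is an extendable pair (axioms (EX\ref{ex1})--(EX\ref{ex3}) reduce to the monoidal coherence of $L$ and the functoriality of $G$, as in the extension Proposition) and that it lies in $\bker^{k-1}$, since $D_T(G)_u$ is forced to vanish unless the support of $u$ contains $T$, i.e.\ unless at least $k$ coordinates are nonzero. Evaluating $\iota_{T'}^*D_T(G)$ on the interior of the $T'$-face shows $D_T(G)$ is supported on the $T$-face alone, giving $\face^k\circ D^k\simeq 1$; and the proof that $D_T$ is left adjoint to $\iota_T^*$ is verbatim the injectivity/surjectivity argument of Lemma~\ref{epsil}, the point again being that all transition maps of $D_T(G)$ away from the face are identities or sections, so any morphism out of the face extends uniquely.

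Given (ii), part (iii) is formal and identical to the argument of Theorem~\ref{standard}(1): condition (1) of (\ref{t:swanQuot}) is the surjectivity furnished by $\face^k\circ D^k\simeq 1$, and for condition (2), given $f\colon\face^k(F)\to\face^k(F')$ one takes $x''=D^k\face^k(F)$, lets $h$ be the counit (whose image under $\face^k$ is an isomorphism by the triangle identities, as the unit is an isomorphism) and $g$ the adjunct of $f$, then applies $\face^k$ to the resulting triangle. For (iv), when $k=n$ the index set $S(n)=\{\{1,\dots,n\}\}$ is a singleton. By the $k=n$ case of Lemma~\ref{l:sup-k}, every $F_\bullet\in\bker^{n-1}$ has all of its boundary values zero (a point with some coordinate $0$ lies in an $(n-1)$-face, and a coordinate equal to $r_i$ is reduced to such a point by pseudo-periodicity), so $F_\bullet$ is completely determined by its restriction to the interior $\prod_{i=1}^n[1,r_i-1]$; this makes $\face^n|_{\bker^{n-1}}$ faithful and full. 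It is essentially surjective because any diagram valued in $\coh(\bigcap_i\div(s_i))$ on the interior extends by zero on the boundary to an object of $\bker^{n-1}$, all the extendable-pair axioms holding trivially since the boundary maps are zero. Hence $\bker^n=\bker(\face^n|_{\bker^{n-1}})=0$ and $\face^n|_{\bker^{n-1}}$ is an equivalence.

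The main obstacle is (ii): one must confirm that the explicit $D_T(G)$ genuinely satisfies the extendable-pair axioms and, crucially, that the unit of the adjunction is a natural \emph{isomorphism} rather than merely an isomorphism on objects. This is precisely what the abstract special-adjoint-functor argument cannot guarantee, and what the later splitting of the $K$-groups requires.
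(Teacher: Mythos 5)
Your proposal is correct and follows essentially the same route as the paper: your per-face adjoints $D_T$ summed over $T\in S(k)$ agree objectwise with the paper's single formula $(D^k(G))_u=\bigl(\bigotimes_{i=1}^n L_i^{\epsilon_i(u)}\bigr)\otimes\bigl(\bigoplus_{T\in S(k)}G^T_{\pi_T(u)}\bigr)$ (the twists $L_i^{\epsilon_i(u)}$ with $i\in T$ only ever multiply zero summands), with the same extension-by-zero, the same transition maps (identities, $G^T_{+e_i}$, terminal maps, and multiplication by $s_i$), and the same reliance on $\supp(G^T_u)\subseteq\bigcap_{i\in T}\div(s_i)$ to make (EX1) hold. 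Parts (iii) and (iv) are likewise obtained as in the paper — formally from the adjunction with invertible unit, and by noting $S(n)$ is a singleton so that $\face^n\big|_{\bker^{n-1}}$ is restriction to the interior — where you in fact supply more detail than the paper's terse proof does.
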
    
  
  \begin{proof}
  
  (i) These functors are obtained by restricting domains. As kernels and cokernels are computed pointwise
  this is exact.
  
  (ii)  
  Given a functor  $G^{T}_{\bullet}\in \text{Func}(\prod_{i \in T} [1,r_i - 1], \phantom{a} \mathfrak{Coh}( \bigcap_{i \in T} \div(s_i))) $ for each
  $T \in S(k) $, we will denote the corresponding object:  

 $$(G^{T}_\bullet)_{T \in S(k) } \in \prod_{T \in S(k) } \func(\prod_{i \in T} [1,r_i - 1], \phantom{a} \coh( \bigcap_{i \in T} \div(s_i)) )  .$$ 
  
   Further we will view $G^{T}_\bullet$ as a functor $\prod_{i \in T}[0,r_i] \rightarrow  \mathfrak{Coh}( \bigcap_{i \in T} \div(s_i))$
  by taking $G^{T}_u = 0$ if for some $i \in T$   we have $u_{i} \in \{0, r_i \} , $ where $0$ is some fixed zero object in $\coh(X)$. Also for $i \in \{1, \dots, k \}$  if $u_{i} \in \{0, r_{i} - 1 \}$ we define the morphisms $G^{T}_{+e_i}: G^{T}_u \rightarrow G^{T}_{u+e_i}$  as the initial and terminal map correspondingly.

  Let us remind the reader of the definition of $\epsilon$ from  Lemma \ref{epsil}. For any $0 \leq i \leq n $ we have functions $\epsilon_i: \vr I \rightarrow \{0,1\}, $ such that for any $u \in \vr I^n$: $\epsilon_i (u) = 1 $ if $u_i = r_i$ and zero otherwise.

  We define the functor $D^k$ on objects as follows:
         
                $$(D^k((G^{T}_\bullet)_{T \in S(k) }))_u =  (\otimes_{i=1}^n L_i^{\epsilon_i(u)}) \otimes (\oplus_{T \in S(k)} G^{T}_{\pi_T(u)} ) $$ 
                
          Let's denote $(D^k((G^{T}_\bullet)_{T \in S(k) }))_\bullet $ by $D^k_\bullet$ for the simplicity of notations. First of all we want to view it as a functor $\vr I^n \rightarrow \mathfrak{Coh}(X).$ For that we have to define the morphisms:

    $$D^k_{+e_i}:   D^k_u  \rightarrow D^k_{u+e_i } . $$
  If $0 \leqslant  u_i < r_i - 1, $ then this map is induced by $\oplus_{\substack{T \in S(k) \\ \text{s.t.} \phantom{a}i \in T }} G^{T}_{+1}.$  If $u_i = r_i-1,$
  then it is induced by the terminal maps $\oplus_{\substack{T \in S(k) \\ \text{s.t.} \phantom{a}i \in T }} G^{T}_{+1}.$  and also by multiplication by the section $s_i.$ 
  
           The pseudo-period isomorphisms $\rho$ are defined by the symmetric monoidal structure of the functor $L.$
The proof of the axioms EX2 and EX3 is automatic. And the proof of EX1 will follow from the commutativity of the diagram:

 \begin{center}
\begin{tikzpicture}
\matrix (m) [matrix of math nodes, row sep=4em, column sep=8em]
{D_u & D_{u+ (r_i- u_i)e_i}  \\
   L_i \otimes D_{u}& L_i \otimes D_{u - u_i  e_i}. \\};
\draw[->] (m-1-1) to node {$D_{+(r_i - u_i) e_i}$} (m-1-2);
\draw[->] (m-1-2) to node {$\rho$} (m-2-2);
\draw[->] (m-1-1) to node {$\sigma_i$} (m-2-1);
\draw[->](m-2-2) to node {$L_i \otimes D_{+u_i  e_i} $}(m-2-1);
\end{tikzpicture}
\end{center}

    This diagram will commute because of the definition of $D_{+(r_i - \alpha_i) \vec{e_i}}$ and because $\text{supp}(G^T_{u}) \subseteq \cap_{i \in T}\div(s_i)$ for any $u \in \prod_{i \in T} [0, r_i]$.

 So we have shown that $D^k_\bullet$ is an extendable pair.  If $k = 1$ then it's clear that $D^1_\bullet$ is in $\bker^0,$ because $D^1_{0} \cong 0.$ 
 
 If $2 \leqslant k \leqslant n, $ we want to see that $D^k_\bullet$ is in $\bker^{k-1}.$ For that we have to see that for any $W \in S(k-1)$ and any $v \in \prod_{i \in W}[0, r_i]$ the sheaf $(\iota^*_{W}(D^k_\bullet))_v $ is isomorphic to zero. But this is true because for any $T \in S(k)$ we have that $G^{T}_u = 0$ if $u_i \in \{0, r_i\} $ for some $i \in T.$
            
  Clearly, $\text{Face}^k \big |_{\bker^{k-1} } \circ D^k  = 1.$

  Next we would like to show that $D^k$ is indeed a left adjoint.  Suppose that we have a morphism 
  $$(G^{T}_\bullet)_{T \in S(k) } \rightarrow \face^k(F_{\bullet}). $$

Such a morphism consists of an ${n \choose k}$-tuple of morphisms
$$ \phi_{T} : G^{T}_\bullet \rightarrow \iota^*_{T}(F_\bullet). $$
We wish to describe the adjoint map
$$
\tilde\phi : D^k_\bullet \rightarrow F_\bullet.  
$$
Using the universal property of coproduct, this morphism is determined by 
maps
\[
\tilde\phi (u)_{T}:  \otimes_{i=1}^n L_i^{\epsilon(u)}\otimes G^{T}_{\pi_T(u)}\rightarrow F_u.
\]

If $u$ is such that $\epsilon_i(u) = 0$ for all $1\leq i \leq n,$ then these maps are just the compositions of  $\phi_{T}$ with the morphisms $F_{+\alpha}.$ If there are $l$'s, such that $u_l = r_l,$ then $\tilde\phi (u)_{T}$ is induced by the composition of $\phi_{T}$ with $\rho^{-1}_F$  and with $F_{+\alpha}.$ 

 We want to check that the map $ \tilde\phi $ is indeed a natural transformation of functors. It's enough to check that the diagram commutes:
  
   \begin{center}
\begin{tikzpicture}
\matrix (m) [matrix of math nodes, row sep=4em, column sep=8em]
{D_u & F_u  \\
   D_{u +e_i}& F_{u + \vec{e_i}} \\};
\draw[->] (m-1-1) to node {$ \tilde\phi (u)   $} (m-1-2);
\draw[->] (m-1-2) to node {$F_{+ e_i}$} (m-2-2);
\draw[->] (m-1-1) to node {$D_{+ e_i}$} (m-2-1);
\draw[->](m-2-1) to node {$\tilde\phi (u + e_i) $}(m-2-2);
\end{tikzpicture}
\end{center}
   
 If $\epsilon_k (u) = 0$ for all $1 \leq k \leq n$  and also $u_i < r_i - 1,$ then it commutes directly from the construction of the maps $  \tilde\phi (u) .  $ Otherwise the commutativity will follow from EX1, EX2 and EX3 for $F_\bullet$.

  Finally we obtained the map: 
  
  $$ \text{Hom}((G^{T}_{\bullet})_{T \in S(k) }, \text{Face}^k(F_{\bullet}) ) \rightarrow \text{Hom}(D^k((G^{T}_{\bullet})_{T \in S(k) }), F_{\bullet} )  .   $$
It's easy to see that this map is bijective, because  the right $\Hom$ is uniquely defined by the restriction to  $k$-faces.

  (iii) Follows from (ii).
  
  (iv) Because for $S(n)$ there is only one element, the set $\{1, \dots, n \}$ itself,  we have that $\iota_{\{1, \dots, n \}} = id$ and $\pi_{\{1, \dots, n \}} = id.$ So $\text{Face}\big |_{\bker^{n-1} }^n$ and $D^n$ are identity functors. 
  
  \end{proof}

  \subsection{$G$-theory and $K$-theory of a root stack}\label{ss:computation}

        In this subsection we will finally describe the $G$-theory of a root stack $X_{L,\vr}$.
        
        \begin{lemma}
        	\label{l:1000}
        	If $X$ is a locally noetherian scheme and $L$ a symmetric monoidal functor as in Definition \ref{parabolic}, there is an equivalence of categories:
        	$$ \mathfrak{Coh} X_{L,\vr} \simeq  \mathcal{EP}(X,L,\vr)$$
        \end{lemma} 
        \begin{proof}
        	This follows by combining  Corollaries \ref{equivalence} and  \ref{extension}.
        \end{proof}

       So we  have:
        $$ G(X_{L,\vr})  \cong K( \mathcal{EP}(X,L,\vr)), $$  
       and we reduced the problem to describing the $K$-theory of the (abelian) category of coherent extendable pairs $\mathcal{EP}(X,L,\vec{r})$.

 We are going to use several splittings of the category of coherent extendable pairs to simplify the latter $K$-theory. The first step is this
 
  \begin{lemma}
  \label{l:K1} 
  If $X$ is a locally noetherian scheme, then in notation of section \ref{s:loc-seq} one has:
 $$ K_i( \mathcal{EP}(X,L,\vr)) \cong G_i(X) \oplus K_i(\bker( \pi_*^{L,\vec{r}})) \  \textnormal{for any} \ i \in \mathbb{Z}_+$$ 
  \end{lemma}
 
\begin{proof}
Using Corollary \ref{1localization} and the localization property of $K$-theory (see for example \cite{Q})   we have the long exact sequence of groups: 
 $$ \dots\rightarrow K_i(\bker( \pi_*^{L,\vec{r}}) ) \rightarrow K_i(\mathcal{EP}(X,L,\vr)) \rightarrow G_i(X) \rightarrow \dots  $$
 
But this sequence splits because of the property $\pi_*^{L,\vr}   \circ  \pi^*_{L,\vr} \simeq 1$ proved in Lemma \ref{epsil}.
 \end{proof}
 
 Lets note :
  
\begin{lemma}  
If $\bA$ is an abelian category then
$$K_i(\func(\vr I^n, \bA)) \cong K_i(\bA)^{\oplus \prod_{j=1}^n r_j} . $$  
 \end{lemma}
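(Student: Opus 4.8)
The plan is to peel off the last coordinate of the poset $\vr I^n$ one unit at a time, using the reduction statements of Theorem~\ref{standard} to turn each peeling into a \emph{split} short exact sequence of $K$-groups, and then to induct. I would run an induction on $n$ with an inner induction on $r_n$, the base case $n=0$ being $\func(I^0,\bA)\simeq\bA$.

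For the inductive step, start from the restriction functor $\pi\colon\func(\vr I^n,\bA)\to\func(\tr_{n-1}(\vr)I^{n-1},\bA)$. By Theorem~\ref{standard}(1) it satisfies the hypotheses of Theorem~\ref{t:swanQuot}, so $\pi$ identifies its target with the Serre quotient $\func(\vr I^n,\bA)/\bker(\pi)$. Feeding the Serre subcategory $\bker(\pi)$ into the localisation sequence for the $K$-theory of abelian categories (\cite{Q}) produces
\[
\cdots\to K_i(\bker(\pi))\to K_i(\func(\vr I^n,\bA))\xrightarrow{\pi_*}K_i(\func(\tr_{n-1}(\vr)I^{n-1},\bA))\to\cdots .
\]

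The crux is that this sequence splits. The functor $\pi^*$ constructed in the Proposition preceding Theorem~\ref{standard} is nothing but restriction along the order-preserving projection $\vr I^n\to\tr_{n-1}(\vr)I^{n-1}$, hence is exact (kernels and cokernels in functor categories are computed pointwise) and induces a map on $K$-groups; the identity $\pi\circ\pi^*\simeq 1$ then gives $\pi_*\circ(\pi^*)_*=\mathrm{id}$, so $\pi_*$ is a split epimorphism. Therefore the long exact sequence collapses into split short exact sequences and
\[
K_i(\func(\vr I^n,\bA))\cong K_i(\bker(\pi))\oplus K_i(\func(\tr_{n-1}(\vr)I^{n-1},\bA)).
\]
By Theorem~\ref{standard}(2), for $r_n>0$ we have $\bker(\pi)\simeq\func(\vs I^n,\bA)$ with $\vs=(r_1,\dots,r_{n-1},r_n-1)$, which lowers $r_n$, while Theorem~\ref{standard}(3) handles $r_n=0$ by dropping the coordinate, $\func(\vr I^n,\bA)\simeq\func(\tr_{n-1}(\vr)I^{n-1},\bA)$.

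Unwinding the two recursions finishes the argument: iterating the split decomposition and then applying the coordinate-drop writes $K_i(\func(\vr I^n,\bA))$ as a direct sum of copies of $K_i(\func(\tr_{n-1}(\vr)I^{n-1},\bA))$, and the outer induction on $n$ assembles these into $K_i(\bA)^{\oplus\prod_{j=1}^n r_j}$. The only genuine point is the splitting; everything else is formal once Theorem~\ref{standard} is in hand, so the step I would be most careful about is verifying that $\pi^*$ is exact and that $\pi_*(\pi^*)_*=\mathrm{id}$ on $K$-groups, which is what forces the connecting maps to vanish. The index bookkeeping and the single coordinate-drop at $r_n=0$ I would treat as routine.
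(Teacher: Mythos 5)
Your strategy is exactly the paper's: its entire proof of this lemma reads ``iterated application of Theorem \ref{standard} and the localisation property of $K$-theory,'' and your elaboration of the splitting ($\pi^*$ is restriction along the projection, hence exact; $\pi\circ\pi^*\simeq 1$ makes $\pi_*$ a split epimorphism on $K$-groups, collapsing the localisation sequence into split short exact sequences) is precisely the intended content, mirroring the argument used for Lemma \ref{l:K1}. Methodologically there is nothing to add.

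But the one step you declared routine is where the trouble sits: the index bookkeeping does not produce $\prod_j r_j$. Writing $a(\vr)$ for the number of copies of $K_i(\bA)$, your recursion is $a(r_1,\dots,r_n)=a(r_1,\dots,r_{n-1})+a(r_1,\dots,r_{n-1},r_n-1)$ for $r_n>0$, with $a(r_1,\dots,r_{n-1},0)=a(r_1,\dots,r_{n-1})$ and $a(\varnothing)=1$; its unique solution is $\prod_{j=1}^n(r_j+1)$ --- one summand per element of the poset $\vr I^n=\prod_j[0,r_j]$, which has $r_j+1$ integer points in the $j$th coordinate. The smallest case already shows this: for $n=1$, $r_1=1$, the category $\func([0,1],\bA)$ is the arrow category of $\bA$, and your split localisation sequence gives $K_i(\bA)^{\oplus 2}$, not one copy. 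So the displayed statement is off by one relative to the paper's own definition of $\vr I^n$, and the correct reading --- the one the paper actually uses downstream --- is ``one copy of $K_i(\bA)$ per point of the poset'': in Lemma \ref{l:K2} the lemma is applied to $\func\bigl(\prod_{i\in T}[1,r_i-1],\,\coh(\cdot)\bigr)$ with exponent $\prod_{i\in T}(r_i-1)$, which is the cardinality of that poset, not the product of its interval lengths plus anything. Your write-up should have surfaced this discrepancy rather than asserting that the recursion ``assembles into $K_i(\bA)^{\oplus\prod_j r_j}$''; as written that final sentence is false under the paper's conventions, even though every structural step preceding it is sound.
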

  
\begin{proof}
  The proof follows from the iterated application of  Theorem \ref{standard} and localization property of the $K$-theory. 
\end{proof}

 Now we want to proceed with $K_{\bullet}(\bker( \pi_*^{L,\vec{r}})) $ exploiting same ideas as in the previous lemmas.
 
 \begin{lemma}
 	\label{l:K2}
 	Let $X$ be a locally noetherian scheme,  $L$ a symmetric monoidal functor as in Definition \ref{parabolic}, and $s_k \in H^0(L_k)$ for $k = 0,\ldots,n$. Then for any $i \in \mathbb{Z}_+$:
 	$$ K_i(\bker( \pi_*^{L,\vr})) \cong \oplus_{k=1}^n \oplus_{T \in S(k) } G_i(\bigcap_{l \in T} \div(s_l))^{\oplus \prod_{l \in T}(r_l - 1)} , $$  
 	where $S(k) = \{T \subset \{1, \dots, n\}\mid |T|=k \}$.
  \end{lemma}
 
 \begin{proof}
 Follows from application of the localization property of $K$-theory, Theorem \ref{localization} and the previous technical lemma.
 \end{proof}

  Combining Lemmas \ref{l:1000}, \ref{l:K1} and  \ref{l:K2} one yields the main result of the section:
  
  \begin{theorem}
  	\label{th:G-root}
  	Let $X$ be a locally noetherian scheme. Let $(L_i, s_i)$ be objects of $\mathfrak{Div}X $ for $i = 1, \dots, n$, and $\vr \in \NN^n$. Then $G$-theory of a root stack $X_{L, \vr} $ is given by the formula:
  	$$G_i(X_{L,\vr})  \cong G_i(X) \oplus \left(\oplus_{k=1}^n \oplus_{T \in S(k) } G_i(\bigcap_{l \in T} \div(s_l))^{\oplus \prod_{l \in T}(r_l - 1)}\right) $$
  	for any $i \in \mathbb{Z}_+$, and where $S(k) = \{T \subset \{1, \dots, n\}\mid |T|=k \}$.
  \end{theorem}

  To finish the section we want to give sufficient conditions when a root stack is smooth. 
  
  \begin{proposition}
  	\label{p:regular}
  		Let $X$ be a smooth scheme over a field $k$. Let $D = \sum_{i=1}^n D_i$ be a  normal crossing divisor. Assume that $\vr$ is an $n$-tuple of natural numbers, such that each $r_i$ is coprime to the characteristic of $k$. Then a root stack $X_{D,\vr}$ is smooth.
   \end{proposition}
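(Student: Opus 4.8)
The plan is to check smoothness locally on $X_{D,\vr}$ by exhibiting an explicit smooth atlas, using the quotient presentation of Proposition \ref{p:quotient}. Since smoothness of an algebraic stack over $k$ may be tested on any smooth surjective atlas, and since the formation of the root stack is compatible with étale localisation on $X$, I would first reduce to the situation analysed in the proof of Corollary \ref{equivalence}: $X=\Spec R$ affine with $R$ smooth over $k$, each $L_i$ trivial, and $D_i=\div(x_i)$ for suitable $x_i\in R$. There Proposition \ref{p:quotient} presents the root stack as $[W/\widehat{\NN^n}]$ with $\widehat{\NN^n}=\Gm^n$ and
\[
W=\Spec\big(R[t_1^{\pm1},\ldots,t_n^{\pm1}][s_1,\ldots,s_n]/(s_1^{r_1}-x_1t_1,\ldots,s_n^{r_n}-x_nt_n)\big).
\]
A standard reduction of structure group via the units $t_i$ then applies: the slice $\{t_1=\cdots=t_n=1\}$ is stabilised exactly by $\mu_{\vr}:=\mu_{r_1}\times\cdots\times\mu_{r_n}\subseteq\Gm^n$, and one obtains an equivalence $X_{D,\vr}\cong[V/\mu_{\vr}]$ with
\[
V=\Spec\big(R[s_1,\ldots,s_n]/(s_1^{r_1}-x_1,\ldots,s_n^{r_n}-x_n)\big),
\]
the group $\mu_{\vr}$ acting by $s_i\mapsto\zeta_i s_i$.

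The heart of the argument is to show that the cover $V$ is smooth over $k$, and this is the step I expect to be the main obstacle. Here the normal crossing hypothesis is indispensable: since $X$ is smooth and $D=\sum_i D_i$ is normal crossing, étale locally on $X$ there is an étale morphism $g\colon X\to\mathbb{A}^d_k=\Spec k[T_1,\ldots,T_d]$ with $x_i=g^*(T_i)$ for $i=1,\ldots,n$, because the equations of the components through a given point form part of a coordinate system. Writing $\kappa\colon\mathbb{A}^d_k\to\mathbb{A}^d_k$ for the Kummer morphism $u_i\mapsto u_i^{r_i}$ ($i\le n$) and $u_i\mapsto u_i$ ($i>n$), one checks directly, using $s_i^{r_i}=x_i$, that $V\cong X\times_{\mathbb{A}^d_k,\,g,\,\kappa}\mathbb{A}^d_k$ via $s_i\mapsto u_i$. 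The projection $V\to\mathbb{A}^d_k$ onto the second factor is the base change of the étale morphism $g$, hence étale, and composing with the smooth structure map $\mathbb{A}^d_k\to\Spec k$ shows that $V$ is smooth over $k$. (Alternatively one verifies the Jacobian criterion for the $n$ equations $s_i^{r_i}-x_i$ inside the smooth ambient scheme $\Spec R[s_1,\ldots,s_n]$, the pivots coming from the classes $dx_i$ being part of a basis of $\Omega_{X/k}$.) This is precisely a form of the generalised Abhyankar lemma, and it is the only place where the normal crossing condition enters.

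Finally I would assemble the two ingredients, and it is here that coprimality is used. Because each $r_i$ is coprime to $\operatorname{char}k$, the polynomial $z^{r_i}-1$ is separable, so $\mu_{r_i}=\Spec k[z]/(z^{r_i}-1)$ is finite étale over $k$ and hence so is $\mu_{\vr}$. Consequently the atlas $V\to[V/\mu_{\vr}]$, being a $\mu_{\vr}$-torsor for a finite étale group scheme, is itself finite étale, in particular smooth and surjective. Since $V$ is smooth over $k$, it follows that $[V/\mu_{\vr}]\cong X_{D,\vr}$ is smooth over $k$; patching the étale-local statements then yields the proposition. I would emphasise that the two hypotheses play complementary roles: the normal crossing condition secures smoothness of the Kummer cover $V$, while coprimality with the characteristic secures that the quotient presentation provides a smooth (indeed étale) atlas.
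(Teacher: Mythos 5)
Your proof is correct and takes essentially the same route as the paper: after the same local reductions the paper invokes Cadman's Example 2.4.1 to present $X_{D,\vr}$ by the chart $\Spec\bigl(R[t_1,\ldots,t_n]/(t_1^{r_1}-f_1,\ldots,t_n^{r_n}-f_n)\bigr)$ --- exactly the $\mu_{\vr}$-cover $V$ you construct by slicing the $\Gm^n$-presentation of Proposition \ref{p:quotient} --- and then asserts its smoothness. Your Kummer fibre-product argument for the smoothness of $V$ and the observation that coprimality makes the atlas $V\rightarrow[V/\mu_{\vr}]$ finite \'etale simply supply the details behind the paper's one-line ``Under our assumptions this scheme is smooth.''
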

   
   \begin{proof}
   		By definition a stack is smooth if its presentation is a smooth scheme. The question is local, so we can assume that $X = \spec(R)$ and a divisor $D$ is a strict normal crossing divisor. If we localize further, we can assume that $R$ is a regular local ring, $D_i = (f_i)$ and $\{f_i\}$ form a part of a regular sequence of parameters. 
   		
   		By Example \cite[2.4.1]{C}, the presentation of a root stack $X_{D, \vr}$ is an affine scheme $A =  R[t_1, \dots, t_n]/(t_1^{r_1} - f_1, \dots, t_n^{r_n} - f_n)$. By \cite[Lemma 1.8.6]{GrM} this scheme is smooth.    	
   \end{proof}
  
  \begin{corollary} \label{c:kequalsg}
  	Under hypothesis of Proposition \ref{p:regular}, $G(X_{D,\vr}) = K(X_{D,\vr})$, where the latter means the Waldhausen $K$-theory of perfect complexes on the stack as defined in \cite{Joshua}.
  \end{corollary}
  
  \begin{proof}
  	Indeed, if a stack is regular, its Waldhausen $K$-theory is the same as $G$-theory. See \cite{Joshua}.
  \end{proof}

  \section{Quotient stacks as root stacks} \label{s:quotient}
  
 \subsection{Generation of inertia groups}
 
        Let $X$ be a scheme with an action of a finite group $G$. We will always assume that this action is \emph{admissible}. Let us recall, following \cite[V.1, Definition 1.7]{SGA1}, that an action is called admissible, if there exists an affine morphism $\phi: X \rightarrow Y$, such that $\sO_Y \cong \phi_*(\sO_X)^G $. This implies that the quotient $X/G$ exists and is isomorphic to $Y$.

 If $x\in X$ is a point (not necessarily closed)
 the subgroup of $G$ stabilising $x$ is called the \emph{decomposition group} and we denote it 
 by $D(x,G)$. The subgroup of the decomposition group acting trivially on the residue field of $x$ is
 called the \emph{inertia group} of $x$ and we denote it by $I(x,G)$. 
 
Note that there is an induced action of $D(x,G)$ on the closure of the point $x$ and $I(x,G)$ acts
trivially on this closure. Hence if $x\in\bar{y}$ then there is an inclusion 
$I(y,G)\hookrightarrow I(x,G)$. We will say that \emph{the inertia groups are generated in codimension
one} if for each point $x\in X$ we have that
\[
I(x,G) = \prod_{x\in \bar{y}} I(y,G)
\]
where the product is over all points of codimension one containing $x$ and the identification is
via the inclusions above. 
For a group acting on a smooth curve all inertia groups will be generated in codimension
one.
We will see that under certain assumptions that  this will be also true in higher dimensions (see Theorem \ref{t:inertia}).

\subsection{Main theorem}
 
 In this subsection we will provide sufficient conditions for a quotient stack to be a root stack. To illustrate the procedure we will start with an example.

 \begin{example} \label{e:dvr}
 Let $\sO$ be a discrete valuation ring with an action of $\mu_r$ with
 $\gcd(r,{\rm char}(\sO))=1$. Then the fixed ring $\sO^{\mu_r}$ is also a discrete
 valuation ring. We will assume that $\sO$ contains a field so that its completion
 $\hat{\sO}$ is a power series ring in one variable over the residue field.
 Note that $\mu_r$ must preserve the maximal ideal of $\sO$. If we further 
 assume that the action is generically free and inertial, i.e $\mu_r$ acts
 trivially on the residue field then if $s$ is a local parameter for $\sO$
 we can conclude that $t=s^r$ is a local parameter for $R=\sO^{\mu_r}$.
 
 We set $Y=\spec(R)$ and consider the root stack
 \[
 \fY = Y_{R, t, r}\rightarrow Y.
 \]
 The parameter $s$ induces a $\mu_r$-equivariant morphism 
 $$
 X \rightarrow \fY
 $$
 corresponding to the triple $(\sO,s,m)$ where $m$ is the canonical isomorphism 
 $\sO^r\rightarrow \sO$.
 We will show later (\ref{p:etale}) that this morphism is in fact \'{e}tale. Using the 2 out of 3 property for
 \'{e}tale maps we get that the natural morphism
 \[
 X\times\mu_r \rightarrow X\times_\fY X
 \]
 is \'{e}tale. To show that $[X/\mu_r]\cong \fY$ it suffices to show that this morphism is radicial 
 (universally injective) and surjective. In other words we need to show that it is a bijection on $K$-points
 for each field $K$.
 
 Given a pair of $K$-points $a$ and $b$ of $X$ that give a $K$-point of $X\times_Y X$ the fiber of
 \[
 X\times_\fY X\rightarrow X\times_Y X
 \]
 over this point consists of the space of isomorphisms between  $a^*(\sO,s,m)$ and $b^*(\sO,s,m)$
 in $\fY$. If the support of the $K$-points is the generic point of $\sO$ this is just a singleton
 and if the support is the closed point then the space is a bitorsor over $\mu_r$. 
 At any rate the morphism above is seen to be a an isomorphism. Hence in this case we have that
 $$
 [X/\mu_r]\cong \fY.
 $$
 \end{example}

 \begin{remark}\label{p:mu_torsor}
 A $\mu_r$-bundle $P$ on a scheme $Z$ is equivalent to the data of an invertible sheaf $\sK$ 
 and an isomorphism $\phi:\sK^r\rightarrow \sO_Z$. To construct $P$ explicitly consider the
 sheaf of algebras 
 $ {\rm Sym}^\bullet \sK^{-1} $. There is a distinguished global section $T\in\sK^{-r}$ given 
 by $(\phi\otimes 1_{\sK^{-r}}(1))$. Then 
 $$
 P = \spec\left({\rm Sym}^\bullet \sK^{-1}/(T-1)\right).
 $$
 \end{remark}

 \begin{remark}
 \label{r:isQuotient}
 Suppose that there is on $Y$ an invertible sheaf $\sN$ and an isomorphism $\sN^r\rightarrow \sL$. Then $Y_{\sL,s,r}$ is a global  quotient stack, see \cite[2.3.1 and 2.4.1]{C} and \cite[3.4]{borne}. We will need this below, so let's recall some of the details. The
 coherent sheaf
 \[
 \sA = \sO_Y\oplus \sN^{-1} \oplus \ldots \oplus \sN^{-(r-1)}
 \]
 can be given the structure of an $\sO_Y$-algebra via the composition 
 \[
 \sN^{-r} \stackrel{\sim}{\longrightarrow}\sL^{-1}\stackrel{s}{\longrightarrow}\sO_Z.
 \] There is an action of $\mu_r$ on this sheaf via the action of $\mu_r$ on $\sN^{-1}$
 given by scalar multiplication.
 Then $Y_{\sL,s,r}=[\spec(\sA)/\mu_r]$. We will need the explicit morphism 
 \[
 Y_{\sL,s,r}\rightarrow [\spec(\sA)/\mu_r]
 \]
 below so let's describe it. Consider a morphism $a:X\rightarrow Y$.
  A morphism $X\rightarrow Y_{\sL,s,r}$, lifting $a$, is a triple $(\sM,t,\phi)$.
 As per the previous remark the sheaf $\sM^{-1}\otimes \sN^{-1}$ gives a $\mu_r$-torsor.
 The torsor comes from the algebra 
 \[
 \sB = {\rm Sym}^\bullet \sM\otimes a^*\sN^{-1}/(T-1).
 \]
 To produce an $X$-point of $[\spec(\sA)/\mu_r]$ we need to describe a $\mu_r$-equivariant 
 map 
 \[
 a^*\sA \rightarrow \sB.
 \]
 This map comes from the section $t$ via :
 \[
 t\in \Hom(\sO,\sM)=\Hom(a^*\sN,\sM\otimes a^*\sN^{-1}).
 \]
 This construction generalizes in the obvious way to a finite list of invertible sheaves
 with section.
 \end{remark}
 
 \begin{assumption}
  \label{assumption}
  We will assume that $X$ and $Y$ are regular, separated, noetherian schemes over a field $k$.
  Let $G$ be a finite group with cardinality coprime to the characteristic of $k$. We will assume that
  $G$ acts admissibly and generically freely on $X$ with quotient $\phi:X\rightarrow Y$. Note that by \cite[Theorem 14.126]{gw}
  our hypothesis imply that the quotient map $X\rightarrow Y$ is flat.
   \end{assumption}
 
 Consider the map $\phi: X \rightarrow Y $ which is faithfully flat and finite. Recall that the set of points of $X$ where $\phi$ is ramified is called the branch locus. It has a natural closed subscheme structure defined by $\text{supp}(\Omega_{X/Y}).$ Because the conditions of the purity theorem  \cite[VI, Thm 6.8]{AK}  are satisfied in our situation this closed subscheme will give rise to an effective Cartier divisor which is called the branch divisor. We
 can write this divisor as
 \[
 R = \sum_{i=1}^n (r_i-1) \left( \sum_{g\in G} g^*D_i \right),
 \]
 where each $D_i$ is a prime divisor. As $G$ acts generically freely, passing
 to generic points of our regular variety produces a Galois extension with
 Galois group $G$. We can view the $D_i$ as points of the scheme $X$.
 The multiplicities $r_i$ are related to the inertia groups
 of $D_i$ via
 \[
 r_i = |I(D_i,G)|,
 \]
  see \cite[Ch. I.9]{Neuk}.
 
 We let $E_i$ be the image of $D_i$ under $\phi$.  It is called the ramification divisor. We form the root stack
 \[
 \fY = Y_{((E_1,r_1),\ldots, (E_n,r_n))}.
 \]    
 Note that we have assumed that the characteristic of our ground field is coprime to
 $G$ and hence to each $r_i$. It follows, via a local calculation along the ring extension
 $\sO_{X,D_i}/\sO_{Y,E_i}$ that we have
 $\phi^*(E_i)=r_i (\sum_{g\in G} g^* D_i)$. This allows us to lift $\phi$ to produce
 a diagram

     \begin{center}
 \begin{tikzpicture}
 \node (TL) at (0,0) {$X$};
 \node (BL) [below of =TL] {$Y$};
 \node (BR) [right of=BL] {$\fY$.};
 \draw [->] (TL) edge node[auto,left] {$\phi$} (BL) ;
 \draw [->] (TL) edge node[auto] {$\psi$} (BR);
 \draw [<-] (BL) edge node[auto] {$\pi$} (BR);
 \end{tikzpicture}
 \end{center}
The morphism
 $\psi$ is equivariant in the sense that precomposition with $g\in G$ produces a two-commuting diagram. This gives us a morphism
 \[
  [X/G]\rightarrow \fY
 \]
 that we would like to show is an isomorphism under our Assumption \ref{assumption} and the extra condition that the ramification divisor is normal crossing.
 
 In the proof below we will need to make use of 
 \begin{proposition}\label{p:abhyankar}
 (Abhyankar's lemma)
 Let $Y=\spec(A)$ be a regular local scheme and $D=\sum_{1\le i \le r} \div (f_i)$
 a divisor with normal crossings, so that the $f_i$ form part of a regular system
 of parameters for $Y$. Set $\bar{Y}={\rm Supp}(D)$ and let $U=Y\setminus \bar{Y}$.
 Consider $V\rightarrow U$ and \'{e}tale cover that is tamely ramified over $D$. If $y_i$ are the generic points of
 ${\rm supp}(\div(f_i))$ then $\sO_{Y,y_i}$ is a discrete valuation ring. If we let
 $K_i$ be its field of fractions then, as  $V$ ramifies tamely, we have that
 \[
 V|_{K_i}=\spec(\prod_{j\in J_i} L_{ji})
 \]
 where the $L_{ji}$ are finite separable extensions of $K_i$. We let $n_{ji}$ be the order
 of the inertia group of the Galois extension generated by $L_{ji}$ and let 
 \[
 n_i = \lcm_{j\in J_i} n_{ji},
 \]
 and set 
 \[
 A' = A[T_1,\ldots, T_r]/(T_1^{n_1}-f_1,\ldots, T_r^{n_r}-f_r)\qquad
 Y'=\spec(A').
 \]
 Then the \'{e}tale cover $V'=V\times_X X'$ of $U\times_X X'$ extends uniquely
 up to isomorphism to an \'{e}tale cover
 of $X'$.
 \end{proposition}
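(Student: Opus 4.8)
The plan is to identify the desired extension with a normalization and then reduce its \'{e}taleness to a codimension-one computation governed by the classical theory of tamely ramified extensions of discrete valuation rings. Throughout I take $X=Y$ and $X'=Y'$ (the primed scheme defined in the statement) and write $U'=U\times_Y Y'$. First I would record that $Y'$ is regular: since the $f_i$ form part of a regular system of parameters and each $n_i$ is prime to $\mathrm{char}(k)$ by the tameness hypothesis, the Kummer-type extension $A'=A[T_1,\ldots,T_r]/(T_i^{n_i}-f_i)$ is regular. One sees this by completing, where the Cohen structure theorem allows one to take $f_i=x_i$ among power-series coordinates, so that $A'$ becomes a power series ring via $x_i=T_i^{n_i}$. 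In particular $Y'$ is normal, and this already yields uniqueness: an \'{e}tale cover of $Y'$ is normal and finite, so any extension of $V'$ must coincide with the normalization $\widetilde V'$ of $Y'$ in the total ring of fractions of $V'$ (the normalization being finite for the noetherian rings arising here). Thus the entire content of the proposition is the assertion that $\widetilde V'\to Y'$ is \'{e}tale.

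For existence I would invoke the Zariski--Nagata purity of the branch locus. Because $Y'$ is regular and $\widetilde V'$ is normal and finite over it, the locus in $Y'$ where $\widetilde V'$ fails to be \'{e}tale is either empty or pure of codimension one. Over $U'$ the map is \'{e}tale, being the pullback of the \'{e}tale cover $V\to U$, with which $\widetilde V'$ agrees since $V'$ is already normal over the regular open $U'$. Hence it remains only to check that $\widetilde V'$ is unramified at the finitely many codimension-one points $y_i'$ of $Y'$ lying over the generic points $y_i$ of the $D_i$.

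At such a point the base ring is the discrete valuation ring $\mathcal O_{Y,y_i}$ with uniformizer $f_i$, and $V$ restricts to $\spec(\prod_{j\in J_i}L_{ji})$ with each $L_{ji}/K_i$ tamely ramified of index $n_{ji}\mid n_i$. The base change adjoins an $n_i$-th root of $f_i$, and by the one-dimensional form of Abhyankar's lemma -- a tame extension of ramification index $e$ becomes unramified after adjoining an $n$-th root of the uniformizer whenever $e\mid n$, since tame ramification indices multiply in towers and $e/\gcd(e,n)=1$ -- every component of $V'$ is unramified over $y_i'$. Combined with purity, this shows $\widetilde V'\to Y'$ is \'{e}tale everywhere.

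The main obstacle, and the reason the hypotheses are arranged as they are, is precisely the passage from ``unramified in codimension one'' to ``\'{e}tale on all of $Y'$'': this is where purity of the branch locus is used, and it is why one must know that $Y'$ is \emph{regular} rather than merely normal after the Kummer base change. A secondary technical point is that the normalization must commute with localization at the $y_i'$ (and with completion) so that the codimension-one computation genuinely controls the global cover; this follows from the finiteness of normalization for the noetherian rings in play.
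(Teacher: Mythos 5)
Your proof is correct and is essentially the paper's approach: the paper establishes this proposition simply by citing SGA1 (Expos\'{e} XIII, Proposition 5.2) and remarking that the extension is constructed as the normalization of $Y'$ in the generic fiber of $V'$, which is exactly the construction you carry out. Your fleshed-out argument---regularity of the Kummer cover $A'$, uniqueness via normality and finiteness of \'{e}tale covers of the regular $Y'$, Zariski--Nagata purity of the branch locus, and the one-dimensional tame (Abhyankar) computation at the codimension-one points over the $y_i$---is precisely the standard proof underlying that citation.
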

 
 \begin{proof}
 This is \cite[Expose XIII,  proposition 5.2]{SGA1}. The proof given shows how to 
 construct the extension of $V'$, we will need this below. The extension can be constructed
 as the normalization of $X'$ in the generic point of $V\times_X X'$.
 \end{proof}
 
\begin{proposition}
	\label{p:etale}
Under Assumption \ref{assumption}, suppose that $\phi: X\rightarrow Y$ is ramified along a simple normal crossings divisor $E$.	
The morphism $\psi:X\rightarrow \fY$ constructed above is \'etale.
\end{proposition}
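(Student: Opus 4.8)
The plan is to verify étaleness after pulling back along a smooth presentation of $\fY$, and then to push the computation to the generic points of the ramification, where Abhyankar's lemma (\ref{p:abhyankar}) does the work. First I would use that being étale is local on the target and stable under (and detected after) étale base change, so I may work locally on $Y$ and replace $\fY$ by an atlas. Localising on $Y = \spec(A)$ I trivialise each $\sO_Y(E_i)$, write $E_i = \div(f_i)$ with the $f_i$ forming part of a regular system of parameters, and present the root stack as $\fY \cong [Y'/\mu]$, where $\mu = \mu_{r_1}\times\cdots\times\mu_{r_n}$, $Y' = \spec(A')$ and $A' = A[T_1,\ldots,T_n]/(T_1^{r_1}-f_1,\ldots,T_n^{r_n}-f_n)$ (cf. Proposition \ref{p:quotient} and Remark \ref{r:isQuotient}). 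Since the $r_i$ are coprime to $\mathrm{char}(k)$, each $\mu_{r_i}$ is étale, so the atlas $Y' \to \fY$ is a finite étale $\mu$-torsor, and $Y'$ is regular by Proposition \ref{p:regular}. Thus it suffices to show that $P := X\times_\fY Y' \to Y'$ is étale.

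Next I would identify $P$. Because $Y' \to \fY$ is the defining $\mu$-torsor, the base change $P \to X$ is precisely the $\mu$-torsor on $X$ encoding $\psi$ (the torsor attached, as in Remark \ref{p:mu_torsor} and Remark \ref{r:isQuotient}, to the triples $(\sO_X,\ldots)$ from which $\psi$ is built). In particular $P \to X$ is finite étale, so $P$ is regular because $X$ is; and since $X \to Y$ is finite while $Y' \to Y$ is separated, $P \to Y'$ is finite. Over the open locus $U = Y \setminus \bigcup_i E_i$ the root stack is trivial, so $\psi$ restricts there to the étale $G$-torsor $V := X|_U \to U$. Hence, writing $U' = U\times_Y Y'$, we obtain $P|_{U'} = V\times_U U' =: V'$, and in particular $P \to Y'$ is already étale over $U'$.

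It then remains to propagate étaleness across the divisor, and here I would invoke Abhyankar's lemma (\ref{p:abhyankar}). The cover $V \to U$ is tamely ramified along $\sum_i E_i$: its inertia at the generic point of $E_i$ is $I(D_i,G)$, of order $r_i$ and coprime to $\mathrm{char}(k)$, so the integers $n_i$ furnished by the lemma are exactly the $r_i$, and the extension $A'$ produced there is the one appearing above. The lemma then yields a \emph{unique} étale cover $W \to Y'$ extending $V' \to U'$, realised as the normalisation of $Y'$ in $V'$. Finally I would match $P$ with $W$: both are finite over $Y'$ and normal (indeed regular), both surject onto $Y'$, and both restrict to $V'$ over the dense open $U'$; since a finite normal cover is determined as the normalisation of $Y'$ in its restriction to a dense open, this forces $P \cong W$. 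Consequently $P \to Y'$ is étale, and therefore $\psi$ is étale.

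The step I expect to be the main obstacle is the identification $P \cong X\times_\fY Y'$ together with the verification that $P$ is normal, finite over $Y'$, and restricts to precisely $V'$ over $U'$, as well as pinning down that the Abhyankar indices $n_i$ coincide with the inertia orders $r_i$ (which rests on $\phi^*E_i = r_i\sum_{g\in G} g^*D_i$ and $r_i = |I(D_i,G)|$). The remaining ingredients — exactness of localisation, regularity of $Y'$, and the étale-local nature of the claim — are routine, so the weight of the argument lies in this bookkeeping and in the appeal to the uniqueness clause of (\ref{p:abhyankar}).
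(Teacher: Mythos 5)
Your proof is correct and takes essentially the same route as the paper: both pass to the atlas $\spec(A')\to\fY$ coming from the quotient-stack presentation of Remark \ref{r:isQuotient}, note that the pullback of $\psi$ is étale away from the divisor, and conclude via Abhyankar's lemma (\ref{p:abhyankar}) by checking that the pullback is normal and finite over $A'$, hence coincides with the unique étale extension constructed as a normalization. The only difference is bookkeeping: the paper adjoins roots of unity and computes the pullback explicitly as $T' = T[x_1,\ldots,x_n]/(x_1^{r_1}-1,\ldots,x_n^{r_n}-1)$ with $y_i\mapsto t_ix_i$, whereas you identify it abstractly as the $\mu$-torsor $P$ on $X$ encoding $\psi$.
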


\begin{proof}
\'Etale maps are local on the source so we can assume that $Y = \Spec(S),$ and all $E_i$ are trivial line bundles so that $s_i \in S$. Further, by shrinking $X$ we can
assume that the morphism $X \rightarrow \fY$ is defined be trivial bundles
on $X$.
Because the map $\phi$ is finite we can write $X = \Spec(T).$  Here $T$ and $S$ are local regular Noetherian $k$-algebras, $T$ is a finite $S$-module, $s_i$ is part of a regular system of parameters and there are elements $t_i \in T,$ such that $t_i^{r_i} = s_i$ . 

  We may check \'{e}taleness after a faithfully flat base extension of the base field
  and hence may assume that the ground field $k$ contains $r_i$-th roots of unity for all $1 \le i \le n. $    
  
  Using Remark \ref{r:isQuotient}, we can see that  the stack $\fY$ is isomorphic to the quotient stack 
  \[ 
[   \Spec(S')/\mu_{r_1}\times \dots \times \mu_{r_n} ], \]
 where $S' = S[y_1, \dots, y_n]/(y_1^{r_1}-s_1, \dots , y_n^{r_n}-s_n)$.

  We want to show that the map $\Spec(T) \rightarrow [\Spec(S')/\mu_{r_1}\times \dots \times \mu_{r_n} ]$ is \'etale. Denote by $T'$  the ring $T[x_1, \dots, x_n]/(x^{r_1}-1, \dots, x^{r_n}-1).$ Using Remark \ref{r:isQuotient} again we obtain
  a Cartesian diagram:
  
  \begin{center}
\begin{tikzpicture}
\matrix (m) [matrix of math nodes, row sep=4em, column sep=7em]
{\Spec(T') & \Spec(S') \\
\Spec(T) &  \big[ \Spec(S')/ \mu_{r_1}\times \dots \times \mu_{r_n} \big]    \\};
 \draw[->] (m-1-1) to node[left] {} (m-2-1);
 \draw[->] (m-1-2) to node[right]  {}(m-2-2);
 \draw[->] (m-1-1) to node {} (m-1-2);
 \draw[->] (m-2-1) to node {}(m-2-2);
  \end{tikzpicture}
  \end{center}
Because $\Spec(S')$ is a presentation of a quotient stack it is enough to show that the map $S' \rightarrow T'$ given by $y_i \mapsto t_i x_i$ is \'etale.

The morphism 
$S_{s_1\ldots s_n}\rightarrow T_{t_1\ldots t_n}$ is flat and unramified by assumption,
hence it is \'{e}tale. By Abhyankar's lemma, (Proposition \ref{p:abhyankar}), this morphism
extends after base change to an \'{e}tale cover of $S'$. By the proof of Abhyankar's lemma
it suffices to show that $T'$ is normal and the map $S'\rightarrow T'$ is integral. Both
of these facts are easily checked and the result follows.
\end{proof}

For a point $p\in Y$ we define
\[
I(p,Y) = \prod_{p\in {\rm supp}(E_i)} \mu_{r_i}.
\]

\begin{proposition}\label{p:torsor}
Let Assumption \ref{assumption} hold. Let $K$ be a field and consider the morphism of $K$-points
$$
\pi_K : X\times_\fY X (K) \rightarrow X\times_Y X (K).
$$
The fiber $\pi^{-1}_K(x_1,x_2)$ over a $K$-point $(x_1,x_2)$ is a bitorsor under
the inertia group
$I(\phi(x_1),Y)$.
\end{proposition}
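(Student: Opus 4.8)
The plan is to identify the fiber of $\pi_K$ over a point $(x_1,x_2)$ with the set of isomorphisms $\psi(x_1)\to\psi(x_2)$ in the groupoid $\fY(K)$, to compute the automorphism group of each of these objects, and then to invoke the elementary fact that a nonempty set of isomorphisms between two objects of a groupoid is a bitorsor under their (canonically isomorphic) automorphism groups.

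First I would unwind the groupoid $\fY(K)$. By Remark~\ref{r:isQuotient} (or directly from Corollary~\ref{Xpoints}), a $K$-point of $\fY$ lying over $p\in Y(K)$ is a tuple $(\sM_i,u_i,\theta_i)_{i=1}^n$, where $\sM_i$ is a line bundle on $\Spec K$, $u_i$ a section of $\sM_i$, and $\theta_i\colon\sM_i^{\otimes r_i}\isoarrow p^*\sO_Y(E_i)$ an isomorphism sending $u_i^{\otimes r_i}$ to $p^*s_i$. Since every line bundle on $\Spec K$ is trivial, a choice of bases turns each $\theta_i$ into the $r_i$th-power map on $K$ and reduces the datum to scalars $a_i\in K$ with $a_i^{r_i}=s_i(p)$, where $s_i(p)\in K$ denotes the value at $p$ of the defining section of $E_i$. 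In this language an isomorphism between points $(a_i)$ and $(a_i')$ over the same $p$ is a tuple $(c_i)$ with $c_i\in\mu_{r_i}(K)$ (from compatibility with $\theta_i$) and $c_ia_i=a_i'$ (from compatibility with the sections).

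Setting $(a_i')=(a_i)$ then computes the automorphism group: $c_i=1$ is forced whenever $a_i\neq0$, i.e.\ whenever $s_i(p)\neq0$, while $c_i$ ranges freely over $\mu_{r_i}(K)$ when $s_i(p)=0$, i.e.\ when $p\in\supp(E_i)$. Hence $\Aut_{\fY(K)}(\psi(x))\cong\prod_{p\in\supp(E_i)}\mu_{r_i}(K)=I(\phi(x),Y)(K)$. For nonemptiness I would use that $\psi$ is built from the triples $(\sL_i,\tau_i,-)$, where $\sL_i=\sO_X(\sum_{g\in G}g^*D_i)$ carries its canonical section $\tau_i$ and $\sL_i^{\otimes r_i}\cong\phi^*\sO_Y(E_i)$, so that for $x$ over $p$ the associated scalar is $a_i=\tau_i(x)$ with $a_i^{r_i}=s_i(p)$. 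Given $(x_1,x_2)\in X\times_Y X(K)$, the objects $\psi(x_1),\psi(x_2)$ lie over the common point $p=\phi(x_1)=\phi(x_2)$ and yield scalars $a_i,a_i'$ with the \emph{same} $r_i$th power $s_i(p)$; setting $c_i=a_i'/a_i$ when $s_i(p)\neq0$ (which then satisfies $c_i^{r_i}=1$) and $c_i=1$ when $s_i(p)=0$ produces an isomorphism $\psi(x_1)\isoarrow\psi(x_2)$.

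It remains to assemble these observations. A $K$-point of $X\times_\fY X$ is a pair $(x_1,x_2)$ together with an isomorphism $\psi(x_1)\isoarrow\psi(x_2)$ in $\fY(K)$; projecting such an isomorphism to $Y$ forces $\phi(x_1)=\phi(x_2)$, so $\pi_K$ indeed lands in $X\times_Y X(K)$ and its fiber over $(x_1,x_2)$ is exactly $\mathrm{Isom}_{\fY(K)}(\psi(x_1),\psi(x_2))$. By the previous step this set is nonempty, and since both objects have automorphism group $I(p,Y)(K)$, precomposition and postcomposition make it a two-sided simply transitive torsor, i.e.\ a bitorsor under $I(\phi(x_1),Y)$. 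The one delicate point is the bookkeeping of the structure isomorphisms $\theta_i$ in the first step—checking that the scalars $c_i$ genuinely respect them—after which the bitorsor statement is purely formal groupoid theory.
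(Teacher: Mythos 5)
Your proposal follows the same route as the paper's (very terse) proof: identify the fiber of $\pi_K$ with $\mathrm{Isom}_{\fY(K)}(\psi(x_1),\psi(x_2))$, unwind the pulled-back triples of line bundles with sections, and observe that the answer is governed by whether the sections defining the $E_i$ vanish at $p=\phi(x_1)$ --- the paper compresses exactly this into a reference to Example (\ref{e:dvr}). Your automorphism-group computation and the formal ``nonempty Isom-set in a groupoid is a bitorsor'' argument are fine and more detailed than what the paper records.

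One step, however, is stated incorrectly and it is precisely the point you flag as ``delicate'' without resolving it. For an abstract object $(\sM_i,u_i,\theta_i)$ of $\fY(K)$ you cannot always ``turn $\theta_i$ into the $r_i$th-power map'': rescaling a basis of $\sM_i$ by $\beta$ rescales the scalar $\lambda_i$ representing $\theta_i$ by $\beta^{r_i}$, so the residual invariant is the class of $\lambda_i$ in $K^*/K^{*r_i}$. This is harmless for your automorphism computation (any automorphism $c_i$ satisfies $c_i^{r_i}\lambda_i=\lambda_i$, hence $c_i\in\mu_{r_i}(K)$, and $c_i=1$ when $a_i\neq 0$, with no normalization needed), and harmless for nonemptiness when $s_i(p)\neq 0$ (your $c_i=a_i'/a_i$ then automatically intertwines the structure isomorphisms). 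But when $s_i(p)=0$ both sections vanish, and your prescription $c_i=1$ silently assumes $\lambda_i^{(1)}=\lambda_i^{(2)}$, i.e.\ that $\psi(x_1)$ and $\psi(x_2)$ define the same class in $K^*/K^{*r_i}$; for two abstract objects with vanishing section over the same $p$ this can fail, so it is not formal. It does hold here, and the clean way to see it is the one the paper uses in the proof of Theorem \ref{main theorem}: since $Y$ is a geometric quotient, $x_2=gx_1$ for some $g\in G$, and the data $(\sO_X(\sum_{g\in G}g^*D_i),\tau_i,\alpha_i)$ defining $\psi$ is $G$-equivariant (the divisors are $G$-invariant and the canonical isomorphisms $\alpha_i$ match the invariant canonical sections), whence $\psi(x_2)=\psi(gx_1)\cong\psi(x_1)$ canonically. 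Adding this one observation closes the gap; with it your argument is correct and agrees in substance with the paper's.
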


\begin{proof} In what follows, we will use the shorthand $G^*$ when we mean $\sum_{g\in G} g^*$.
Recall that the morphism $\psi$ is defined by $(\sO(G^*E_i), s_{G^*E_i}, \alpha_i)$ where
$\alpha_i$ are isomorphisms coming from the fact that 
\[
r_iG^*E_i = r_i\phi^*(D_i).
\]
The fiber over $(x_1,x_2)$ is exactly the set of isomorphism from 
$x_1^* \sO(G^*E_i)$ to $x_2^*\sO(G^*E_i)$ as $i$ varies. As in Example \ref{e:dvr}, this
depends on whether the section $x_1^* s_{G^*E_i}$ vanishes or not. The vanishing condition
precisely depends on $\phi(x_1)$ and the result follows.
\end{proof}

The final ingredient we need to finish the proof is that under our assumptions the inertia group of $X$ is generated in codimension one. For that let us recall the following

\begin{proposition}
	\label{t:abhyankar}
	(Abhyankar's theorem \cite[Theorem2.3.2]{GrM}).
	
	Let $Y$ be a locally noetherian normal scheme, $D$ is a divisor with normal crossing, $\hat{Y} = \supp(D)$ and $U = Y \setminus \hat{Y}$. Assume that $X \rightarrow Y $ is a finite morphism, $G$ is a finite group operating on $X$, such that $X\mid U $ is a $G$-torsor. Then the following are equivalent:
	\begin{enumerate}[(i)]
\item $X$ is tamely ramified relative to $D$.
\item For every $y\in Y$ there exists an \'{e}tale neighborhood $Y'$ of $y$ in $Y$, and a scheme $S = \sO_Y[(T_i)_{i\in I'}]/((T_i^{r'_i} - f_i'))_{i\in I'} $, where $D_{Y'} = \sum_{i \in I'} D'_i$ and $\div(f'_i) = D'_i $, such that there is an isomorphism of couples:
$$(X', G) \simeq (G\times^H S, G), $$
where $X' = X\times_Y Y'$ and $H = \prod_{i\in I'}\mu_{r'_i}$. Let us recall that $G\times^H S$ is the quotient $(G\times S)/H$, where $H$ acts "by the formula": $h\cdot (g,s) = (gh^{-1}, hs) $.
	\end{enumerate}
\end{proposition}

Let us apply this fact to describe the inertia group. 

\begin{theorem}
	\label{t:inertia}
Under Assumption \ref{assumption}, suppose that $\phi: X\rightarrow Y$ is ramified along a simple normal crossings divisor. Then the inertia groups of $(X,G)$ are generated in codimension one.
\end{theorem}

\begin{proof}
	Firstly observe that condition (i) of Abhyankar's theorem (Proposition \ref{t:abhyankar}) is satisfied. Inertia is a local notion and also, clearly, the inertia group of $(S,H)$ is generated in codimension one. 
	
	There is an isomorphism of quotient stacks: $[(G\times^H S)/G] \cong [S/H] $. So inertia groups of $G\times^H S$ under the action of $G$ and of $S$ under the action of $H$ are isomorphic for the corresponding points. This finishes the proof. 
		\end{proof}

Finally we are ready to proof the main theorem of this section.

\begin{theorem}
	\label{main theorem}
  If Assumption \ref{assumption} is satisfied and if additionally the ramification divisor is a normal crossing divisor, then we have the isomorphism of stacks $[X/G] \cong \fY$.	
\end{theorem}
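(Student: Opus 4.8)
The plan is to follow the strategy already carried out in the discrete valuation case, Example \ref{e:dvr}: exhibit $X$ as a common étale cover of both stacks and then check that the two groupoid presentations coincide. By Proposition \ref{p:etale} the $G$-equivariant morphism $\psi:X\to\fY$ is étale, and it is surjective, since its composite with the coarse moduli morphism $\fY\to Y$ is the finite surjection $\phi$ while $\fY\to Y$ is a homeomorphism on underlying topological spaces. Hence $X\to\fY$ is an étale cover and $\fY$ is the stack quotient of the groupoid $X\times_\fY X\rightrightarrows X$. The equivariance of $\psi$ supplies, for each $g\in G$, a canonical $2$-isomorphism $\psi\cong\psi\circ g$, and these assemble into a morphism of groupoids over $X$, that is an $X$-morphism
\[
\Theta:X\times G\longrightarrow X\times_\fY X,\qquad (x,g)\longmapsto (x,\,gx,\,\psi(x)\cong\psi(gx)).
\]
Since $[X/G]$ is the stack quotient of $X\times G\rightrightarrows X$, it suffices to prove that $\Theta$ is an isomorphism; then the two presentations agree and the induced morphism $[X/G]\to\fY$ is an equivalence.

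Both structure maps $X\times G\to X$ and $X\times_\fY X\to X$ (the latter a base change of $\psi$) are étale, so $\Theta$ is a morphism of étale $X$-schemes and is therefore étale. As an étale morphism that is radicial (universally injective) and surjective is an isomorphism, it remains only to show that $\Theta$ induces a bijection on $K$-points for every field $K$, and we may take $K$ separably closed.

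To analyse the fibres I compose $\Theta$ with the morphism $\pi:X\times_\fY X\to X\times_Y X$ of Proposition \ref{p:torsor} and fix a $K$-point $(x_1,x_2)$ of $X\times_Y X$. As $\phi$ is a geometric quotient, over separably closed $K$ the points $x_1$ and $x_2$ lie in a single $G$-orbit, so the fibre of $X\times G\to X\times_Y X$ over $(x_1,x_2)$ is a torsor under the decomposition group $D(x_1,G)=I(x_1,G)$. On the other hand, Proposition \ref{p:torsor} identifies the fibre of $\pi$ over $(x_1,x_2)$ with a bitorsor under $I(\phi(x_1),Y)=\prod_{\phi(x_1)\in\supp(E_i)}\mu_{r_i}$. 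Thus $\Theta$ restricts to a map from an $I(x_1,G)$-torsor to an $I(\phi(x_1),Y)$-bitorsor, and it is bijective on this fibre precisely when it identifies the two structure groups.

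The decisive step, and the one I expect to be the main obstacle, is this identification of inertia groups. Here Theorem \ref{t:inertia} applies: the inertia group is generated in codimension one, so $I(x_1,G)=\prod_{x_1\in\bar y}I(y,G)$ over the codimension-one points $y$ through $x_1$, and the only such points with nontrivial, necessarily tame, inertia are the branches of the translates of the $D_i$ passing through $x_1$, whose inertia groups are cyclic of order $r_i=|I(D_i,G)|$. Since $\phi(x_1)\in\supp(E_i)$ exactly when $x_1\in\supp(\phi^*E_i)$, the two index sets coincide, and the normal crossing hypothesis guarantees that exactly one branch of each relevant translate passes through $x_1$, so that the inertia splits as $I(x_1,G)\cong\prod\mu_{r_i}=I(\phi(x_1),Y)$. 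Unwinding the construction of $\psi$ shows that $\Theta$ realises precisely this isomorphism on structure groups; it is therefore bijective on every fibre, hence bijective on $K$-points, and the theorem follows.
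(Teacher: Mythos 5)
Your proposal is correct and follows essentially the same route as the paper: you reduce to showing that $X\times G\rightarrow X\times_\fY X$ is an isomorphism, establish \'etaleness via Proposition \ref{p:etale}, and verify bijectivity on field-valued points by comparing, over a point of $X\times_Y X$, the $I(x_1,G)$-torsor fibre with the $I(\phi(x_1),Y)$-bitorsor fibre of Proposition \ref{p:torsor}, with Theorem \ref{t:inertia} supplying the identification of the two groups. The only differences are presentational --- you make explicit the groupoid-presentation framing and the codimension-one decomposition of the inertia group, which the paper leaves terse.
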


\begin{proof}
 To prove this all we need to show is that the map 
 $$ \chi: \quad   X \times G \rightarrow X\times_{\fY} X $$
     $$ \quad (x, g) \mapsto  (x,gx)                       $$ 
 is an isomorphism. 
 
   By Proposition \ref{p:etale}, the map $\psi: X \rightarrow \fY $ is \'etale, and so the map $X\times_\fY X \rightarrow X $ is \'etale as a pullback. Clearly two maps $X \times G \rightarrow X $ given by $(x, g) \mapsto x$ and $(x,g) \mapsto gx$ are \'etale and so the map $\chi$ must be \'etale.
   
    We are going to show that the map $$\chi(K): X(K) \times G \rightarrow X\times_\fY X (K) $$ is bijective for any field extension of the ground field $k \subset K.$ The points of the scheme on the left is a pair $(x, g),$ where $g \in G$ and $x: \Spec(K) \rightarrow X$ a $K$-point. 
    
   Consider the morphism $\Psi:X\times G\rightarrow X\times_Y X$. This morphism is surjective
   as we have a geometric quotient, see \cite[Definition 0.4]{git}. Consider a $K$-point $(x_1,x_2)\in X\times_Y X(K)$. Using the properties of geometric quotients we have
   that 
   $x_2= g x_1$ for some $g\in G$. Using this we see the fiber $\Psi^{-1}(x_1,x_2)$
   is a torsor over the inertia group $I({\rm supp}(x_1),G)$. By Theorem \ref{t:inertia} our inertia groups are generated in codimension one, so we see that we have
   an identification 
   \[
   I({\rm supp}(x_1),G) = \mu_{r_{i_1}}\times\ldots \times \mu_{r_{i_l}}
   \]
   as in Proposition \ref{p:torsor}. 
   It follows that the morphism $\chi$ is \'{e}tale and universally injective (radical). This
   implies that it is an open immersion. As it is also surjective, it is an isomorphism and
   the result follows.
\end{proof}

 \section{An application of root stacks to the equivariant $K$-theory of schemes}
 \label{s:equi}

   As an application of the theorems proved in the sections 3 and 4, we can formulate a result about equivariant $K$-theory.
   
   \begin{theorem}
   	\label{K-theorem}
    Let $X$ be a regular, separated, noetherian scheme over the field $k$ with a generically free admissible action of a finite group $G,$ such that the order of $G$ is coprime to the characteristic of $k$. Let's denote by $X/G = Y $ and assume that all the conditions of  Assumption \ref{assumption} are satisfied. Also assume that $X \rightarrow Y$ is ramified along a simple normal crossing divisor $E$. Then there is an isomorphism of groups:
        $$K^\bullet_G(X) \cong K^\bullet (Y) \oplus (\oplus_{i=1}^n ( \oplus_{T \in S(i) } G^\bullet (\bigcap_{l \in T} E_l)^{\oplus \prod_{l \in T}(r_l - 1)}) ), $$
where $r_l$ are orders of inertia groups (see Section 4 for notation), and $S(i) = \{T \subset \{1, \dots, n\}\mid |T|=i \}$.    
   \end{theorem}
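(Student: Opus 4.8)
The plan is to reduce the equivariant $K$-theory to the $G$-theory of the quotient stack, to identify that stack with a root stack via (\ref{main theorem}), and then to read off the answer from the localisation computation of \S\ref{ss:computation}. The theorem is essentially a synthesis of the two main constructions of the paper, so most of the work is to line up the hypotheses and the indexing correctly.

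First I would recall that $G$-equivariant coherent sheaves on $X$ are precisely coherent sheaves on the quotient stack $[X/G]$, so that by definition $K^\bullet_G(X) = K^\bullet([X/G])$. Under Assumption \ref{assumption} the scheme $X$ is regular, and the canonical atlas $X\rightarrow [X/G]$ is a smooth (in fact finite) cover; since a Deligne--Mumford stack is regular as soon as it admits a regular atlas, $[X/G]$ is regular. Hence its $K$-theory agrees with its $G$-theory by \cite{Joshua}, giving $K^\bullet_G(X) \cong G^\bullet([X/G])$. By (\ref{main theorem}) the normal crossing hypothesis yields an isomorphism of stacks $[X/G] \cong \fY$, where $\fY = Y_{((E_1,r_1),\ldots,(E_n,r_n))}$ is the root stack along the ramification divisors $E_i$ with weights $r_i = |I(D_i,G)|$. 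Thus $K^\bullet_G(X) \cong G^\bullet(\fY)$, and the problem is now entirely about the $G$-theory of a root stack over the regular base $Y$.

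Next I would invoke the equivalences of abelian categories $\coh \fY \simeq \mathcal{EP}(Y, L, \vr)$ provided by Corollaries \ref{equivalence} and \ref{extension}, so that $G_i(\fY) \cong K_i(\mathcal{EP}(Y,L,\vr))$. Applying Lemma \ref{l:K1} splits off the base, $K_i(\mathcal{EP}(Y,L,\vr)) \cong G_i(Y) \oplus K_i(\bker(\pi_*^{L,\vr}))$, and Lemma \ref{l:K2} computes the kernel term as $\bigoplus_{k=1}^n \bigoplus_{T \in S(k)} G_i(\bigcap_{l \in T} \div(s_l))^{\oplus \prod_{l \in T}(r_l-1)}$. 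Since the sections $s_l$ cutting out $\fY$ are exactly those defining the divisors $E_l$, we have $\div(s_l)=E_l$ and $\bigcap_{l\in T}\div(s_l)=\bigcap_{l\in T}E_l$. Finally, because $Y$ is regular we may replace $G^\bullet(Y)$ by $K^\bullet(Y)$, and assembling the pieces (renaming the summation index $k$ to $i$) produces exactly the asserted decomposition.

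The step that deserves the most care is the passage $K^\bullet_G(X) \cong G^\bullet([X/G])$: one must ensure that the hypotheses genuinely force $[X/G]$ to be regular so that the $G=K$ comparison applies. The cleanest route is the atlas argument above, but one may equally note that $\fY$ is smooth by Proposition \ref{p:regular}, using that $Y$ is regular, that the ramification locus is a normal crossing divisor, and that each $r_i$ is coprime to ${\rm char}(k)$ (which holds since $r_i=|I(D_i,G)|$ divides $|G|$). Everything else is bookkeeping: matching the index sets $S(k)$, the multiplicities $\prod_{l\in T}(r_l-1)$, and the intersection schemes $\bigcap_{l\in T}E_l$ against the output of Lemma \ref{l:K2}, together with the harmless replacement of $G^\bullet(Y)$ by $K^\bullet(Y)$ afforded by regularity of $Y$.
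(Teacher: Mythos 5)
Your proposal is correct, and its overall skeleton is exactly the paper's: identify $K^\bullet_G(X)$ with $G^\bullet([X/G])$, invoke Theorem \ref{main theorem} to replace $[X/G]$ by the root stack $\fY$, and then read off the answer from Corollaries \ref{equivalence} and \ref{extension} together with Lemmas \ref{l:K1} and \ref{l:K2}. The one place you diverge is the first step. The paper does not pass through regularity of the stack at all: it argues directly on $X$ that, since $X$ is regular and $G$ is finite, every $G$-equivariant coherent sheaf admits an equivariant locally free resolution (by averaging an ordinary one), so equivariant $K$-theory equals equivariant $G$-theory of $X$, and the latter is $G^\bullet([X/G])$ because equivariant coherent sheaves are coherent sheaves on the quotient stack. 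You instead take $K^\bullet_G(X)=K^\bullet([X/G])$ as the definition and then deduce $K=G$ for the stack from regularity (via the regular atlas $X\rightarrow[X/G]$, or alternatively smoothness of $\fY$ from Proposition \ref{p:regular}) and the comparison theorem of \cite{Joshua}. Both routes are sound; the paper's averaging argument is more elementary and self-contained, while yours is conceptually cleaner and parallels the paper's own corollary that $G(X_{D,\vr})=K(X_{D,\vr})$ for smooth root stacks. Your explicit remarks that $\div(s_l)=E_l$ and that $G^\bullet(Y)$ may be replaced by $K^\bullet(Y)$ because $Y$ is regular are bookkeeping points the paper leaves implicit, and they are correct.
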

 
 \begin{proof}
 
  By  assumption $X$ is a regular scheme and the group $G$ is finite so for any $G$-equivariant sheaf we can always construct an equivariant locally free resolution by averaging  the usual locally free resolution. This simple argument shows that the equivariant $K$-theory of $X$ is the same as the equivariant $G$-theory.
    
     The category of $G$-equivariant sheaves on $X$ is equivalent to the category of sheaves on the quotient stack $[X/G]$ so we can see that 
     $$K_G(X) \cong G([X/G]). $$
In  Theorem \ref{main theorem} we proved that under our assumptions that there is an isomorphism of stacks $[X/G] \cong \fY, $ so we have an isomorphism of their $G$-theories:
 
          $$G([X/G]) \cong G( \fY) . $$
          
   Finally the application of Theorem \ref{th:G-root} gives the desired formula.      
 \end{proof}
 
 Let us give some examples. 
 
 \begin{example}
 	Let's consider $\mathbb{A}^1$ over a field $k$ with an action of $\mu_3$ (it acts by multiplication). Assume that $\text{char}(k) \neq 3$. Then $\mathbb{A}^1 / \mu_3 \cong \mathbb{A}^1$ and ramification divisor is $\div(0)$. The inertia group is $\mu_3$.  So by Theorem \ref{K-theorem}:
 	$$K^\bullet_{\mu_3}(\mathbb{A}^1 ) \cong K^\bullet (\mathbb{A}^1) \oplus K^\bullet (k) \oplus K^\bullet (k) \cong  K^\bullet (k)^{\oplus 3 }.$$
 	
 \end{example}
 
 \begin{example}
 	This example was inspired by the paper \cite{AO}. The Burniat surface $X$ with $K_X^2 = 6$ is a Galois $G := C_2 \times C_2$-cover of $\text{Bl}_3\mathbb{P}^2$ (a del Pezzo surface of degree 6). Let's assume that the ground field $k$ is algebraically closed and $\text{char}(k) \neq 2$. The ramification divisor is given in Figure 1 \textit{loc. cit}: it is denoted by  $A_l, \ B_l, \ C_l$, where $0 \le l \le 4$. The inertia group of each component is $C_2$, the inertia group of an intersection point of any two components is $G$. The intersection of three components is empty. Also $A_l \cong B_l \cong C_l \cong \mathbb{P}^1 $, for all $l = 0, \ldots, 3 $. 
 	
 	Applying Theorem \ref{K-theorem} one gets:
 	
 	$$ K^\bullet_G(X) \cong K^\bullet (\text{Bl}_3\mathbb{P}^2) \oplus (\oplus_{i=1}^{2} Z_i^\bullet ) , $$
 	$$Z_1^\bullet = K^\bullet (\mathbb{P}^1)^{\oplus 12}  ,$$
 	$$ Z_2^\bullet =  K^\bullet(k)^{\oplus 30} .$$

 \end{example}


\end{document}